\newcommand{\R}{\mathbb{R}}
\newcommand{\W}{\mathcal{W}}
\newcommand{\N}{\mathbb{N}}
\newcommand{\Z}{\mathbb{Z}}
\newcommand{\E}{\mathbb{E}}
\newcommand{\und}{\underline}
\newcommand{\F}[0]{\ensuremath{\mathcal{F}}}
\newcommand{\smallsum}{\textstyle\sum}
\renewcommand{\P}{\mathbb{P}}
\newcommand{\lf}{\lfloor}
\newcommand{\rf}{\rfloor}
\newcommand{\ctwo}{\varphi}
\newcommand{\cthree}{\theta}
\newcommand{\cfour}{\vartheta}
\newcommand{\vt}{{\eta}}
\newcommand{\Y}{\mathcal{X}}
\newcommand{\fl}[1]{\lfloor #1 \rfloor_{h}}
\newcommand{\Id}{\mathrm{Id}}
\newtheorem{theorem}{Theorem}[section]
\newtheorem{lemma}[theorem]{Lemma}
\newtheorem{cor}[theorem]{Corollary}
\newtheorem{prop}[theorem]{Proposition}
\begin{document}
\title{Strong convergence of full-discrete nonlinearity-truncated 
accelerated exponential Euler-type
approximations for stochastic Kuramoto-Sivashinsky  equations}

\author{Martin Hutzenthaler, Arnulf Jentzen, and Diyora Salimova \\
	\small University of Duisburg-Essen (Germany), ETH Zurich (Switzerland), and ETH Zurich (Switzerland)}

\maketitle

\tableofcontents

\begin{abstract}
This article introduces and analyzes a new explicit,
easily implementable, and full discrete 
accelerated exponential Euler-type approximation scheme
for additive space-time white noise driven stochastic partial differential 
equations (SPDEs) with possibly non-globally monotone nonlinearities
such as stochastic Kuramoto-Sivashinsky  equations.
The main result of this article proves 
that the proposed approximation scheme
converges strongly and numerically weakly to the solution process 
of such an SPDE.
Key ingredients in the proof of our convergence result
are a suitable generalized coercivity-type condition, 
the specific design of the accelerated exponential Euler-type 
approximation scheme, and an application of 
Fernique's theorem.
\end{abstract}

\section{Introduction}

For strong $L^2$-convergence of a sequence of approximations it is necessary that the the $L^2$-norms
of the approximations are uniformly bounded.
In the case of finite-dimensional stochastic differential equations (SDEs) this is ensured by
the well-known \textit{coercivity condition}. If $d\in\N$ is the dimension of the SDE, $\mu\colon\R^d\to\R^d$ is the drift coefficient
and $\sigma\in\R^{d\times d}$ is the diffusion coefficient, then the coercivity condition is satisfied
if there exists $c\in\R$ such that for all $x\in\R^d$ it holds that
\begin{equation}  \begin{split}
  \langle x,\mu(x)\rangle_{\R^d}+\tfrac{1}{2}\|\sigma(x)\|_{\textup{HS}(\R^d,\R^d)}^2 \leq c(1+\|x\|_{\R^d}^2).
\end{split}     \end{equation}
In an infinite-dimensional separable Hilbert space $H$, this coercivity condition requires the diffusion coefficient
$\sigma$ to satisfy $\|\sigma(0)\|_{\textup{HS}(H,H)}<\infty$. In particular, the coercivity condition is not satisfied
in the important
case of additive space-time white noise where the diffusion coefficient is constantly equal to the identity operator
or a non-zero multiple hereof
(note for every $d\in\N$ that the Hilbert-Schmidt norm of the identity operator $\textup{I}_{\R^d}$ is equal
to $\|\textup{I}_{\R^d}\|_{\textup{HS}(\R^d,\R^d)}=\sqrt{d}$).
This is one central reason why almost all temporal strong convergence results in the literature (see the discussion
in the next paragraph) apply only to trace-class noise.
In particular, to the best of our knowledge, there exists no strong approximation result for stochastic Kuramoto-Sivashinsky (K-S) equations 
with space-time white noise in the 
scientific literature.
The key contribution of this work is to impose an appropriately generalized coercivity-type condition
in which the coercivity
 constant may depend on the noise process (cf.~\eqref{eq:intro_coercivity},  Theorem~\ref{theorem:main}, and Corollary~\ref{kura:cor:last}  below) and to introduce a suitable new 
explicit approximation scheme which is, roughly speaking, designed in a way so that it respects this generalized coercivity-type condition (see \eqref{eq:intro_coercivity}--\eqref{eq:intro_bound_ap} and Proposition~\ref{prop:priori_bound} below).
This new coercivity-type condition allows us to analyse
a number of additive space-time white noise driven
SEEs with superlinearly growing nonlinearities which could not be handled before.
In particular, it enables us to prove strong convergence
of the proposed scheme in the case of stochastic K-S equations (see Theorem~\ref{theorem:main} 
and Corollary~\ref{kura:cor:last} below).
The analysis of further SEEs is subject to future research.

Next we review the literature on strongly converging approximations of
additive noise-driven stochastic evolution equations (SEEs) with   superlinearly growing nonlinearities.   It was shown that the explicit Euler scheme and the linear-implicit Euler scheme do, in general, not converge strongly and numerically weakly 
in the case of  such SEEs; cf., e.g., Theorem~2.1 in~\cite{hjk11},
Theorem~2.1 in~\cite{HutzenthalerJentzenKloeden2013},
and Section~5.1 in Kurniawan~\cite{Kurniawan2014}. Fully drift-implicit Euler methods, by contrast, converge strongly for some SEEs with superlinearly growing nonlinearities;
see, e.g.,
Theorem~2.4 in Hu~\cite{Hu1996},
Theorem~2.10 in Gy{\"o}ngy \& Millet~\cite{gm05}, Theorem~7.1 in Brze{\'z}niak~\cite{Brzezniak2013}, and  Theorem~1.1 in Kov{\'a}cs et al.~\cite{kll2015}. However, to implement these methods a nonlinear equation has to be solved in each time step approximatively and this results in an additional computational cost  (especially, when the state space of the considered
SEE is high dimensional, see, e.g., Figure~4 
in~\cite{HutzenthalerJentzenKloeden2012}). Moreover, it is not  yet known whether this approximate implementation of fully drift-implicit Euler schemes converge strongly. Recently, a series of appropriately modified versions of the explicit Euler scheme have been proposed and shown to converge strongly for some SEEs with superlinearly growing nonlinearities; cf., e.g., Hutzenthaler et al.~\cite{HutzenthalerJentzenKloeden2012}, Wang \& Gan~\cite{WangGan2013},
Hutzenthaler \& Jentzen~\cite{Hutzenthaler2015}, Tretyakov \& Zhang~\cite{TretyakovZhang2013},
and Sabanis~\cite{Sabanis2013,Sabanis2013E}
in the case of finite dimensional SEEs
and cf., e.g.,
Gy{\"o}ngy et al.~\cite{GoengySabanisS2015},
Kurniawan~\cite{Kurniawan2014},
Jentzen \& Pu{\v s}nik~\cite{jp2015}, and Becker \& Jentzen~\cite{BeckerJentzen2016}
in the case of infinite dimensional SEEs.  These methods are explicit, easily realizable, and somehow tame/truncate superlinearly growing nonlinearities to prevent from strong divergence. 
However, except of Becker \& Jentzen~\cite{BeckerJentzen2016}, each of the above mentioned 
temporal strong convergence results
for implicit (see, e.g., \cite{Hu1996,gm05,Brzezniak2013,kll2015}) or explicit (see, e.g., \cite{HutzenthalerJentzenKloeden2012,WangGan2013,Hutzenthaler2015,TretyakovZhang2013,Sabanis2013,Sabanis2013E,GoengySabanisS2015,Kurniawan2014,jp2015,BeckerJentzen2016}) schemes applies
merely to trace class noise driven SEEs and excludes the important case of 
the more irregular space-time white noise.
In  Becker \& Jentzen~\cite{BeckerJentzen2016} a coercivity/Lyapunov-type condition
has been imposed and used to establish 
strong convergence rates in the case of stochastic Ginzburg-Landau equations
with additive space-time white noise; 
cf.\ (85) in \cite{BeckerJentzen2016},
Lemma~6.2 in \cite{BeckerJentzen2016},
and Corollaries~6.16--6.17 in \cite{BeckerJentzen2016}.
However, the machinery in  \cite{BeckerJentzen2016}
does not exploit the powerful negativity of the linear operator 
%assumes the coercivity/Lyapunov-type coefficient to be a constant which is
%independent of the noise process 
(cf.\ (85) in  \cite{BeckerJentzen2016} with \eqref{eq:intro_coercivity} below where the $H_{1/2}$-norm
appears on the right-hand side)
and thereby applies merely to 
stochastic Ginzburg-Landau equations 
but excludes most of the challenging additive space-time white noise driven 
SEEs with superlinearly growing nonlinearities
such as stochastic   K-S equations.

In the following we illustrate the main result 
of this article (see Theorem~\ref{theorem:main}
in Section~\ref{sec:strong_error} below)
by means of an application of this result in the case of
stochastic K-S equations
(see Corollary~\ref{kura:cor:last} in Section~\ref{sec:KS} below).
More formally, let 
$ T \in (0,\infty) $, 
$ \xi \in H^1_P( (0,1), \R) $,
$ H = L^2( (0,1); \R ) $,
let $ F \colon L^4( (0,1); \R ) \rightarrow H^{ - 1 }( (0,1), \R ) $
be the function with the property that for all
$ v \in L^4( (0,1); \R ) $
it holds that
$ 
  F(v) 
  = 
  v - 
  \nicefrac{ 
    1
  }{ 2 } \, ( v^2 )' 
$,
let $ \mathcal{A} \colon D( \mathcal{A} ) \subseteq H \rightarrow H $
be the Laplacian with periodic boundary conditions on $ H $,
let $ A \colon D(A) \subseteq H \rightarrow H $
be the linear operator which satisfies  for all $ v \in D(A) $ that $ D(A) = D( \mathcal{A}^2 ) $
and
$
  A v = - \mathcal{A}^2 v - \mathcal{A} v - v
$,
let $ B \in L( H, H^{ - 1 }( (0,1) , \R ) ) $
be the linear operator with the property that for all 
$ v \in H $ 
it holds that
$
  B v = v'
$,
let $ (\Omega, \F, \P ) $ be a probability space
with a normal filtration $ (\F_t)_{ t\in[0,T] } $,
and let $ (W_t)_{ t\in[0,T] } $ be an 
$ \Id_{ H } $-cylindrical
$ (\F_t)_{t\in[0,T]} $-Wiener process.
The above assumptions ensure that
there exists an up to indistinguishability
unique $ (\F_t)_{t\in[0,T]} $-adapted stochastic process
$ X \colon [0,T] \times \Omega \rightarrow L^4( (0,1) ; \R ) $
with continuous sample paths
which satisfies that for all $ t \in [0,T] $
it holds $ \P $-a.s.\ that
\begin{align}
\label{eq:intro_SPDE_mild} 
&
  X_t
=
  e^{tA} \, \xi
  +
  \int_0^t
    e^{(t-s)A}  \,
    F( X_s ) 
  \, ds
  +
  \int_0^t
    e^{(t-s)A} \, B 
  \, dW_s
\end{align}
(cf., e.g., Duan \& Ervin~\cite{DuanErvin2001_Kuramoto_Sivashinsky}).
The stochastic process $ X $ is thus 
a mild solution of the stochastic K-S  equation
\begin{align}
\label{eq:intro_SPDE}
&
  \tfrac{ \partial }{ \partial t }
  X_t(x)
  =
  -
  \tfrac{\partial^4}{\partial x^4}
  X_t(x)
  -
  \tfrac{\partial^2}{\partial x^2}
  X_t(x)
  -
  X_t( x )
  \cdot 
  \tfrac{ \partial }{ \partial x } X_t(x)
  +
  \tfrac{ \partial^2 }{ \partial x \partial t } W_t( x )
\end{align}
with $X_t(0)=X_t(1)$, $X_t'(0)=X_t'(1)$, $X_t''(0)=X_t''(1)$, $X_t^{(3)}(0)=X_t^{(3)}(1)$, and $ X_0(x) = \xi(x) $ for
$ x \in (0,1) $, $ t \in [0,T] $. Note that the noise in \eqref{eq:intro_SPDE_mild} and \eqref{eq:intro_SPDE} is quite rough in the sense that $\frac{ \partial^2 }{ \partial x \partial t } W_t( x )$, $x \in (0,1)$, $ t \in [0, T]$, is the distributional space derivative of the space-time white noise $\frac{\partial}{\partial t} W_t(x)$, $x \in (0,1)$, $ t \in [0, T]$.
In this article we introduce the following 
\emph{nonlinearity-truncated accelerated exponential Euler-type 
scheme} to
approximate the solution process $ X $
of the SPDE~\eqref{eq:intro_SPDE}. Let $
  ( e_n )_{ n \in \Z } \subseteq H
$,
$
  ( P_n )_{ n \in \N } \subseteq L(H)
$,
$ 
  ( h_n )_{ n \in \N } \subseteq (0,\infty) 
$, $ \varrho   \in (\nicefrac{1}{16}, \nicefrac{3}{32})$,
$ \chi \in (0, \nicefrac{\varrho}{2}- \nicefrac{1}{32} ] $ satisfy 
for all 
$ n \in \N $, $ v \in H $
that
$
  e_0 = 1
$,
$
  e_n(\cdot) = \sqrt{2} \cos( 2 n \pi ( \cdot ) )
$,
$
  e_{ - n }(\cdot) = \sqrt{2} \sin( 2 n \pi ( \cdot ) )
$,
$
  P_n( v ) =
  \sum_{ k =-n }^n
  \left< e_k, v \right>_H
  e_k
$,
$
  \limsup_{ k \to \infty } h_k = 0
$,
let 
$ 
  \lfloor \cdot \rfloor_h 
  \colon \R \rightarrow \R 
$, 
$ h \in (0,\infty) $,
be the mappings which satisfy
for all $ h \in (0,\infty) $, $ t \in \R $ that 
$
  \lfloor t \rfloor_h 
=
  \max\!\left(
    (-\infty, t]
    \cap 
    \{ 0, h, -h, 2h, -2h, \ldots \} 
  \right)
$,
and
let $ \mathcal{O}^n \colon [0,T] \times \Omega \to P_n( H ) 
$,
$ n \in \N $, and
$ \mathcal{X}^n \colon [0,T] \times \Omega \to P_n( H ) 
$,
$ n \in \N $,
be stochastic processes which satisfy 
that for all $ n \in \N $, $ t \in [0,T] $
it holds $ \P $-a.s.  that $\mathcal{O}_t^n = \int_{0}^{t} P_n \, e^{(t-s)A} \, B \, dW_s$ and 
\begin{equation}
\begin{split}
\label{eq:acc_exp_scheme}
  \mathcal{X}^n_t
& =
 P_n \, e^{ t A } \, \xi
  +
  \int_0^t P_n \,
    e^{ ( t - s ) A } 
      \,
      \mathbbm{1}_{ 
        \{
          \| 
            ( - A )^{ \varrho } \mathcal{X}^n_{ \lfloor s \rfloor_{ h_n } }
          \|_H
          +
          \| 
            ( - A )^{ \varrho }[ \mathcal{O}^n_{ \lfloor s \rfloor_{ h_n } } +P_n  e^{ \lf s \rf_{ h_n } A } \xi]
          \|_H
          \leq
          | h_n |^{-\chi}
        \}
      } \,
      F( \mathcal{X}^n_{ \lfloor s \rfloor_{ h_n } } ) \, ds
      +
 \mathcal{O}_t^n 
  .
\end{split}
\end{equation}
In Corollary~\ref{kura:cor:last} in Section~\ref{sec:KS} below we demonstrate that the approximation scheme \eqref{eq:acc_exp_scheme} converges strongly to the solution of the SPDE \eqref{eq:intro_SPDE}. More precisely, Corollary~\ref{kura:cor:last} (with $\beta= \nicefrac{3}{16}$, $\eta=\kappa=1$, $\varrho= \varrho$, $b_k=0$, $\tilde{b}_k= 2 k \pi$, $\mathcal{X}^n = \mathcal{X}^n$, $\mathcal{O}^n = \mathcal{O}^n$, $X =X$ for $k \in \Z$, $n \in \N$ it the notation of Corollary~\ref{kura:cor:last})  proves that for all $p \in (0, \infty)$ it holds that
\begin{equation}
\label{eq:strong_convergence_space_time}
\limsup\nolimits_{ n \to \infty }
\sup\nolimits_{ t \in [0,T] }
\E\big[ 
\| 
X_t - \mathcal{X}^n_t 
\|_H^p
\big]
= 0
.
\end{equation}
Corollary~\ref{kura:cor:last} follows from an application of Theorem~\ref{theorem:main} below, which is the main result of this paper. Theorem~\ref{theorem:main} establishes strong convergence for a more general class of SPDEs as well as for a more general type of approximation schemes.

We now add a few comments on the approximation scheme~\eqref{eq:acc_exp_scheme} and on key ideas in the proof of Corollary~\ref{kura:cor:last} and Theorem~\ref{theorem:main}, respectively. First, we note that the approximation scheme~\eqref{eq:acc_exp_scheme} does not temporally discretize the semigroup $(e^{tA})_{t \in [0, \infty)}$ appearing in \eqref{eq:intro_SPDE_mild} and is thus an appropriate modification of the accelerated exponential Euler scheme in Section~3 in Jentzen \& Kloeden~\cite{Jentzen2009c} (cf., e.g., also Section~4 in Jentzen \& Kloeden~\cite{jk09d} for an overview and e.g., Lord \& Tambue~\cite{lt13} and Wang \& Qi~\cite{wang2015note} for further results on accelerated exponential Euler approximations). This lack of discretization of the semigroup in the stochastic integral \eqref{eq:intro_SPDE_mild} has been proposed in  Jentzen \& Kloeden~\cite{Jentzen2009c}  to obtain an approximation scheme which converges under suitable assumptions with a significant higher convergence rate than previously analyzed approximation schemes such as the linear implicit Euler scheme or the exponential Euler scheme (cf., e.g., Theorem~3.1 in Jentzen \& Kloeden~\cite{Jentzen2009c}, Theorem~1 in~\cite{Jentzen2011b}, Theorem~3.1 in Wang \& Qi~\cite{wang2015note}, and Theorem~3.1 in Qi \& Wang~\cite{qi2016accelerated}). In this article the lack of discretization of the semigroup in the non-stochastic integral in \eqref{eq:intro_SPDE_mild} is employed for a different purpose, that is, here this lack of discretization is used to obtain a scheme that inherits an appropriate a priori estimate from the exact solution process of the SPDE~\eqref{eq:intro_SPDE}. More specifically, we observe that the nonlinearity $F \colon  L^4( (0,1); \R ) \rightarrow H^{ - 1 }( (0,1), \R )$ appearing in \eqref{eq:intro_SPDE_mild} satisfies that there exist suitable measurable functions $\phi, \Phi \colon C([0,1], \R) \to [0, \infty)$ and a real number $\varphi \in [0, 1)$ such that for all $v, w \in H_1$ it holds that
\begin{align}\label{eq:intro_coercivity}
\left< v, F( v + w ) \right>_H \leq  \phi(w) \| v \|^2_H+ \ctwo\| v \|^2_{ H_{ 1 / 2 } }+ \Phi( w )
\end{align}
(see Lemma~\ref{coer:kura} for the proof of \eqref{eq:intro_coercivity} and see also the proof of Corollary~\ref{kura:cor:last} for the specific choice of $\phi$, $\Phi$, and $\varphi$). Inequality \eqref{eq:intro_coercivity}, in turn, ensures that for every continuous stochastic process $O \colon [0, T] \times \Omega \to C([0,1], \R)$ with $\forall \, u \in [0, T] \colon \P(O_u= \int_0^u e^{(u-s)A} \, B\, dW_s)=1$ and every $t \in [0, T]$ it holds $\P$-a.s. that 
\begin{align}\label{eq:intro_bound}
\|X_t\|_H \leq \|O_t\|_H + \sqrt{ e^{\int_0^t 2\phi(O_s) \, ds } \, \| \xi\|^2_H +   2\int_0^te^{ \int_s^t 2\phi( O_u  ) \, du}  \,    \Phi( O_s ) \, ds}.
\end{align}
Note that \eqref{eq:intro_coercivity} is an appropriate generalized coercivity-type condition for the SPDE under consideration (cf., e.g., Chapter~4 in Pr{\'e}v{\^o}t \& R{\"o}ckner~\cite{pr07}). A key contribution of this paper is to reveal that the approximation scheme \eqref{eq:acc_exp_scheme} inherits \eqref{eq:intro_bound} in the sense that there exists $\theta \in (0, \infty)$ such that for all $t \in [0, T]$, $n \in \N$ it holds $\P$-a.s. that  
\begin{align}\label{eq:intro_bound_ap}
\begin{split}
&\|\mathcal{X}^n_t\|_H \leq \|\mathcal{O}^n_t\|_H + \sqrt{ e^{\int_0^t 2\phi(\mathcal{O}^n_{\lf s \rf_{h_n}}) \, ds } \, \| \xi\|^2_H +   2\int_0^te^{ \int_s^t 2\phi( \mathcal{O}^n_{\lf u \rf_{h_n}} ) \, du}  \left[   \Phi( \mathcal{O}^n_{\lf s \rf_{h_n}}) +  \theta \,|h_n|^{\nicefrac{1}{\theta} } \right] ds}
\end{split}
\end{align}
(see Proposition~\ref{prop:priori_bound} for the proof of \eqref{eq:intro_bound_ap}). Strong convergence results for explicit (see, e.g., \cite{HutzenthalerJentzenKloeden2012,WangGan2013,Hutzenthaler2015,TretyakovZhang2013,Sabanis2013,Sabanis2013E,GoengySabanisS2015,Kurniawan2014,jp2015,BeckerJentzen2016}) and implicit (see, e.g., \cite{Hu1996,gm05,Brzezniak2013,kll2015}) numerical approximation schemes for SEEs in the literature impose coercivity-type assumptions in which $\phi$ and $\Phi$ are constants (cf., e.g., Assumption~(B)' in Hu~\cite{Hu1996}, (C2) in Gy{\"o}ngy \& Millet~\cite{gm05}, Section~1 in Hutzenthaler et al.~\cite{HutzenthalerJentzenKloeden2012}, Assumption~2.1 in Wang \& Gan~\cite{WangGan2013}, (2.11) in Hutzenthaler \& Jentzen~\cite{Hutzenthaler2015}, Assumption~2.1 in Tretyakov \& Zhang \cite{TretyakovZhang2013}, Section~7 in Brze{\'z}niak~\cite{Brzezniak2013}, (A-1) in Sabanis~\cite{Sabanis2013},  (A-4) in Sabanis~\cite{Sabanis2013E}, Assumption~1 in Gy{\"o}ngy et al.~\cite{GoengySabanisS2015}, (4.79) in Kurniawan~\cite{Kurniawan2014}, Section~7.4 in Jentzen \& Pu{\v s}nik~\cite{jp2015}, Section~3.1 in Kov{\'a}cs et al.~\cite{kll2015}, and (85) in Becker \& Jentzen~\cite{BeckerJentzen2016}). Such a coercivity-type condition is not fulfilled in the case of a number of nonlinear SPDEs with rough noise such as \eqref{eq:intro_SPDE}. In particular, none of the above mentioned references applies to the stochastic K-S equation \eqref{eq:intro_SPDE} and Theorem~\ref{theorem:main} and Corollary~\ref{kura:cor:last} below, respectively, are -- to the best of our knowledge -- the first strong approximation results for the stochastic K-S equation~\eqref{eq:intro_SPDE}. We would also like to add that in the above mentioned articles on accelerated exponential Euler approximations it was crucial to avoid the discretization of the semigroup in the stochastic integral while our analysis exploits the fact that the semigroup  in the non-stochastic integral in \eqref{eq:intro_SPDE_mild} is not discretized but allows discretizations of the semigroup in the stochastic integral (cf.~Theorem~\ref{theorem:main} in Section~\ref{sec:strong_error}). Next we observe that the approximation scheme \eqref{eq:acc_exp_scheme} can be easily realized on a computer. More formally, note that for all $n \in \N$, $k\in \N \cap (-\infty, \nicefrac{T}{h_n}-1]$ it holds  $ \P $-a.s.\ that
\begin{equation}\label{eq:intro_recursion}
\begin{split}
\mathcal{O}^n_{ ( k + 1 ) h_n } & =
e^{ h_n A }
\mathcal{O}^n_{ k h_n } 
+ \int_{k h_n}^{(k+1)h_n} P_n \, e^{((k+1)h_n -s)A} \, B \, dW_s ,\\
\mathcal{X}^n_{ ( k + 1 ) h_n } & =
e^{ h_n A }
\mathcal{X}^n_{ k h_n } 
+
P_n \, A^{ - 1 } (e^{h_n A}- \mathrm{Id}_H) \, \mathbbm{1}_{ 
	\{
	\| 
	( - A )^{ \varrho } \mathcal{X}^n_{ k h_n}
	\|_H
	+
	\| 
	( - A )^{ \varrho }[ \mathcal{O}^n_{ k h_n }   +P_n  e^{ k h_n A } \xi]
	\|_H 
	\leq
	| h_n |^{-\chi}
	\}
} \, F( \mathcal{X}^n_{ k h_n } ) \\
& \quad + \mathcal{O}^n_{ ( k + 1 ) h_n } -
e^{ h_n A }
\mathcal{O}^n_{ k h_n }, 
\end{split}
\end{equation}
and \eqref{eq:intro_recursion} can be used directly in an implementation. We illustrate this in Figures~\ref{fig:sample_path} and \ref{fig:matlab_code} where three realizations of $X_T(\omega)$, $\omega \in \Omega$, are calculated approximatively with the numerical approximation method \eqref{eq:acc_exp_scheme} in the case where $T=1$, $n=10000$, $h_n= 1/\sqrt{n}$, $\varrho = \nicefrac{5}{64}$, $\chi= \nicefrac{1}{128}$, and $\xi=0$. The {\sc Matlab} code used to generate Figure~\ref{fig:sample_path} can be found in Figure~\ref{fig:matlab_code} below.  The approximation scheme \eqref{eq:acc_exp_scheme} is thus an easily implementable strongly convergent approximation method for the SPDE~\eqref{eq:intro_SPDE}. In particular, to the best of our knowledge, the scheme~\eqref{eq:acc_exp_scheme} is the first approximation method in the scientific literature that has been shown to converge strongly to the solution of the stochastic K-S equation~\eqref{eq:intro_SPDE}.
\begin{figure}
	\centering
	\includegraphics[width=.9\textwidth]{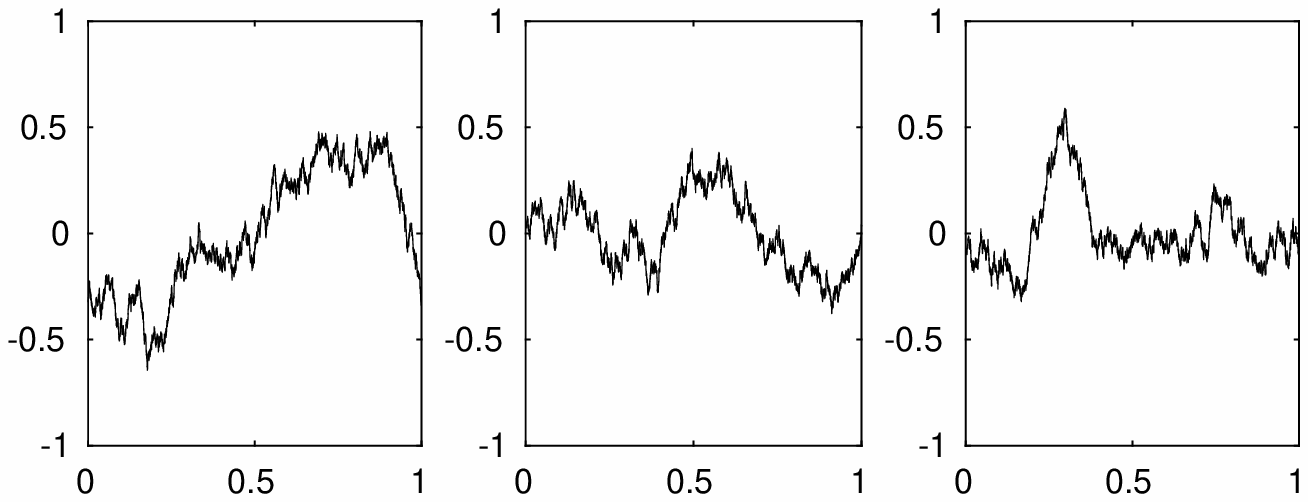}
	\caption{Result of a call of the {\sc Matlab} code in Figure~\ref{fig:matlab_code}.}
	\label{fig:sample_path}
\end{figure}

The remainder of this article is organized as follows.
In Section~\ref{sec:a_priori} the required a priori moment bounds 
for the nonlinearity-truncated 
approximation schemes are established and
in Section~\ref{sec:pathwise_error} 
the error analysis is performed in the pathwise sense 
under the hypothesis of suitable a priori bounds
for the approximation processes. Section~\ref{sec:strong_error} combines the 
results of Section~\ref{sec:a_priori}
and Section~\ref{sec:pathwise_error} and 
thereby establishes strong convergence in Theorem~\ref{theorem:main},
which is the main result of this article.
The analysis in Sections~\ref{sec:a_priori}--\ref{sec:strong_error}
is carried out for abstract stochastic evolution equations
on separable Banach and Hilbert spaces, respectively.
Section~\ref{sec:KS} then verifies that the 
assumptions of Theorem~\ref{theorem:main}
in Section~\ref{sec:strong_error} are satisfied in the case of concrete 
stochastic partial differential equations of the type~\eqref{eq:intro_SPDE} and, in particular,
establishes Corollary~\ref{kura:cor:last}.
\begin{figure}
\centering
\begin{lstlisting}
rng('default');
N = 10000;  h = 1/sqrt(N);  chi = 1/128;  varrho = 5/64;
A = -(-N:N).^4.*16*pi^4+(-N:N).^2.*4*pi^2-1;
S = sqrt((exp(2*h*A)-1)./A/2)*2*pi.*(-N:N);  X = zeros(3,2*N+1);
for m = 1:3   
  Y = zeros(1,2*N+1);  O_old = zeros(1,2*N+1);
  for n = 1:sqrt(N)
    O_new = exp(A*h).*O_old+S.*randn(1,2*N+1);
    y = [Y(N+1),1i*Y(N:-1:1)*sqrt(2)+Y(N+2:end)*sqrt(2),zeros(1,N)];  
    y = real(fft(y));  y1 = ifft(y.^2);  
    y2 = [imag(y1(N+1:-1:2))*sqrt(2),real(y1(1)),real(y1(2:N+1))*sqrt(2)];
    FY = (Y-pi*fliplr(y2).*(-N:N))...
    .*(norm((-A).^varrho.*Y)+norm((-A).^varrho.*O_old)<=h^(-chi));
    Y = exp(A*h).*Y+A.^(-1).*(exp(A*h)-1).*FY+O_new-exp(A*h).*O_old; 
    O_old = O_new;
  end
  X(m,:) = [Y(N+1),1i*Y(N:-1:1)*sqrt(2)+Y(N+2:end)*sqrt(2),zeros(1,N)];
  X(m,:) = real(fft(X(m,:)));
end
figure(1);  subplot(1,3,1);
plot((1:2*N+1)/(2*N+2),X(1,:),'k','LineWidth',0.3);  ylim([-1 1]);  
subplot(1,3,2);  
plot((1:2*N+1)/(2*N+2),X(2,:),'k','LineWidth',0.3);  ylim([-1 1]);
subplot(1,3,3);
plot((1:2*N+1)/(2*N+2),X(3,:),'k','LineWidth',0.3);  ylim([-1 1]);
\end{lstlisting}
\caption{ {\sc Matlab} code for 
	Figure~\ref{fig:sample_path}. }
\label{fig:matlab_code}
\end{figure}

\subsection{Notation}

Throughout this article the following notation is used. 
We denote by $ \mathbb{N} = \{1, 2, 3, \ldots \}$ the set of all natural numbers. 
For two sets $ A $ and $ B $ we denote by 
$ \mathbb{M}( A, B ) $ the set of all mappings 
from $A$ to $B$. For a set $A$ we denote by $\mathcal{P}(A)$ the power set of $A$, we denote by $\#_A \colon \mathcal{P}(A) \to [0, \infty]$ the counting measure on $A$, 
and we denote by $\mathcal{P}_0(A)$ the set given by $\mathcal{P}_0(A)= \{ B \in \mathcal{P}(A) \colon \#_A(B) < \infty\}$. For two measurable spaces $( A, \mathcal{A})$ and $( B, \mathcal{B})$  we denote by $ \mathcal{M}(\mathcal{A}, \mathcal{B})$ the set of all $\mathcal{A} / \mathcal{B}$-measurable mappings. Let $ \Gamma \colon (0, \infty) \to (0, \infty)$ be the function with the property that for all $x \in (0, \infty)$ it holds that $\Gamma(x)= \int_0^{\infty} t^{(x-1)} \, e^{-t} \, dt$ (Gamma function). Let $ \mathrm{E}_r \colon [0, \infty) \to [0, \infty)$, $r \in (0, \infty),$ be the functions with the property that for all $ r \in (0, \infty)$, $x \in [0, \infty)$ it holds that $ \mathrm{E}_r[x]= \sum_{n=0}^{\infty} \tfrac{x^{nr}}{\Gamma(nr+1)}$ (cf. Chapter~7 in Henry~\cite{h81} and see, e.g.,  Definition~1.3.1 in \cite{Jentzen2015SPDElecturenotes}). 
For a topological space $ (X, \tau) $ 
we denote by $ \mathcal{B}(X) $ the Borel sigma-algebra of 
$ (X, \tau) $.
For a set $A$ we denote by $\Id_A \colon A \to A$ the mapping with the property that for all $ a \in A$ it holds that $\Id_A(a)=a$ 
(identity mapping on $ A $). For a set $A \in \mathcal{B}(\R)$ we denote by $\lambda_A \colon \mathcal{B}(B) \to [0, \infty]$ the Lebesgue-Borel measure on $A$. For a measure space $(\Omega, \mathcal{F}, \mu)$, a measurable space $(S, \mathcal{S})$, a set $R \subseteq S$, and a function $f \colon \Omega \to S$ we denote by $\left[ f \right]_{\mu, \mathcal{S}} $ the set given by $\left[f \right]_{\mu, \mathcal{S}} = \left\{ g \in \mathcal{M}(\mathcal{F}, \mathcal{S}) \colon \left( \exists \, A \in \mathcal{F} \colon \mu(A)=0 \,\, \text{and} \,\, \{ w \in \Omega \colon f(w) \neq g(w)\} \subseteq A \right) \right\}$. We denote by $ \lf \cdot \rf_h \colon \R \to \R$, $ h \in (0, \infty)$, and $\lceil \cdot \rceil_h \colon \R \to \R$, $h \in (0, \infty)$, the mappings with the property that for all $t \in \R$, $h \in (0, \infty)$ it holds that $\lf t \rf_h = \max( (-\infty, t] \cap \{0, h, -h, 2h, -2h, \ldots\} )$ and $\lceil t \rceil_h= \min([t, \infty), \{0, h, -h, 2h, -2h, \ldots\})$. For real numbers $p \in [1, \infty)$, $\theta \in (0,1)$ and a $\mathcal{B}((0,1)) /\mathcal{B}(\R)$-measurable function $v \colon (0,1) \to \R$ we denote by $\|v\|_{\W^{\theta, p}((0,1), \R)}$  the extended real number given by  
\begin{align}
\|v\|_{\W^{\theta, p}((0,1), \R)} = \left[ \int_0^1 |v(x)|^p \, dx + \int_0^1 \int_0^1 \frac{|v(x)-v(y)|^p}{|x-y|^{1+ \theta p}} \, dx \, dy\right]^{\frac{1}{p}}.
\end{align}

\section{A priori bounds}
\label{sec:a_priori}

In this section we accomplish in Proposition~\ref{prop:priori_bound} and Corollary~\ref{cor:bounds} below appropriate a priori bounds for our approximation scheme. Before we establish Proposition~\ref{prop:priori_bound} and Corollary~\ref{cor:bounds} below, we present in Lemma~\ref{lem:YX_in_DA}, Lemma~\ref{lemma:cont}, Lemma~\ref{lemma:meas}, and Lemma~\ref{lemma:cond} a few elementary auxiliary results for Proposition~\ref{prop:priori_bound} and Corollary~\ref{cor:bounds}.

\subsection{Regularity of the numerical approximations}
\label{subsec:regularity}

The following elementary and well-known lemma is a slight generalization of Lemma 3.3 in Becker \& Jentzen~\cite{BeckerJentzen2016}. In particular, the proof of Lemma \ref{lem:YX_in_DA} is a slight adaptation of the proof of Lemma 3.3 in Becker \& Jentzen~\cite{BeckerJentzen2016}. 

\begin{lemma} \label{lem:YX_in_DA}
Let $(V, \left\| \cdot \right\|_V)$ be an $\R$-Banach space, let $A \colon D(A) \subseteq V \to V$ be a generator of a strongly continuous analytic semigroup with $ \mathrm{spectrum}(A) \subseteq \{ z \in \mathbb{C} \colon \mathrm{Re}(z) <0 \}$, and let $T, h \in (0, \infty)$, $ Y, Z \in \mathbb{M}([0,T], V)$ satisfy for all $t \in [0,T]$ that $Y_{t} = \int_{0}^{t} e^{(t-s)A} Z_{\fl{s}}  \, ds$. Then
\begin{enumerate}[(i)] 
\item 
it holds for all $ t \in [0,T] $ that $ Y_{t}\in D(A) $,
\item\label{eq:YX_in_DA_2}
 it holds that the function $ [0,T] \ni t \mapsto Y_{t} \in D(A) $ is continuous,
\item it holds that the function $ [0,T] \ni t \mapsto Y_{t}  \in V $ is Lipschitz continuous, 
\item it holds that the function $ [0,T] \backslash \{ 0, h, 2h, \ldots\} \ni t \mapsto Y_{t} \in V $ is continuously differentiable, 
\item it holds for all $ t \in [0,T] \backslash \{ 0, h, 2h, \ldots \} $ that $ \frac{ d Y_{t} }{ d t } = A Y_{t}  + Z_{\fl{t}} $, and
\item it holds for all  $ t \in [0,T] $ that $ Y_{t} = \int_0^t \big[ A Y_{s} + Z_{\fl{s}}  \big] \, ds $. 
\end{enumerate}
	
\end{lemma}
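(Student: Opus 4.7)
The approach is to exploit the fact that the map $s \mapsto Z_{\fl{s}}$ is piecewise constant, taking the value $Z_{kh}$ on $[kh,(k+1)h) \cap [0,T]$, combined with the observation that the spectrum condition $\mathrm{spectrum}(A) \subseteq \{z \in \mathbb{C} \colon \mathrm{Re}(z) < 0\}$ ensures that $0$ lies in the resolvent set of $A$, so that $A^{-1} \in L(V, D(A))$. For each $t \in [0,T]$, letting $k = k(t)$ be the largest nonnegative integer with $kh \leq t$, I would split the defining integral as
\begin{equation*}
Y_{t} = \sum_{j=0}^{k-1} \int_{jh}^{(j+1)h} e^{(t-s)A} Z_{jh} \, ds + \int_{kh}^{t} e^{(t-s)A} Z_{kh} \, ds
\end{equation*}
and evaluate each summand via the substitution $u = t-s$ and the standard identity $\int_{0}^{r} e^{uA} v \, du = A^{-1}(e^{rA} - \mathrm{Id}_{V}) v$. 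This produces a closed-form expression for $Y_{t}$ as a finite sum of elements of $\mathrm{range}(A^{-1}) = D(A)$ and hence immediately proves (i).

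Part (ii) then follows directly from the same closed-form expression, since both $t \mapsto Y_{t}$ and $t \mapsto A Y_{t}$ are finite sums of terms of the form $A^{-1} e^{r(t) A} Z_{jh}$ and $e^{r(t)A} Z_{jh}$ with $r$ depending continuously (piecewise) on $t$; strong continuity of the semigroup together with boundedness of $A^{-1}$ then gives continuity on each closed subinterval, and one checks that the new summand activated at a breakpoint $t = kh$ contributes a term proportional to $(\mathrm{Id}_{V} - e^{0\cdot A}) Z_{kh} = 0$, so continuity is preserved across breakpoints.

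For (iv) and (v) I would rewrite, for $t \in [kh,(k+1)h] \cap [0,T]$,
\begin{equation*}
Y_{t} = e^{(t-kh)A} Y_{kh} + \int_{kh}^{t} e^{(t-s)A} Z_{kh} \, ds,
\end{equation*}
where $Y_{kh} \in D(A)$ by (i) and the integrand is driven by a constant forcing. This is the classical strong-solution formula for the inhomogeneous Cauchy problem $\dot{u} = A u + Z_{kh}$ with initial value $Y_{kh}$, and analyticity of the semigroup together with constancy of $Z_{kh}$ on the interval make $t \mapsto Y_{t}$ continuously differentiable on $(kh,(k+1)h)$ with $\tfrac{d}{dt}Y_{t} = A Y_{t} + Z_{kh} = A Y_{t} + Z_{\fl{t}}$. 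Concatenating over the finitely many subintervals intersecting $[0,T]$ yields (iv) and (v); the Lipschitz bound (iii) then follows because $t \mapsto A Y_{t} + Z_{\fl{t}}$ is bounded on the complement of the finite set $\{0,h,2h,\ldots\} \cap [0,T]$, using (ii) and the fact that $s \mapsto Z_{\fl{s}}$ attains only finitely many values on $[0,T]$.

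Finally, (vi) follows by applying the fundamental theorem of calculus on each subinterval $[jh,(j+1)h]$ using (v), and then telescoping; the continuity of $Y$ at the breakpoints, provided by (ii), glues the pieces together to give $Y_{t} - Y_{0} = Y_{t}$. The only real obstacle is the bookkeeping at the breakpoints $\{jh\}$, where $Z_{\fl{\cdot}}$ has jumps and (v) does not apply; this is resolved by noting that those breakpoints form a Lebesgue-null set (so they do not affect the integral in (vi)) and that $Y$ is continuous across them (by (ii)), so the telescoping is legitimate.
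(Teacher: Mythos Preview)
Your proposal is correct and follows the standard route for this type of statement. The paper itself does not give an explicit proof of this lemma; it simply remarks that the result is a slight generalization of Lemma~3.3 in Becker \& Jentzen~\cite{BeckerJentzen2016} and that the proof is a slight adaptation of the proof given there. Your argument---writing $Y_t$ as a finite sum of terms of the form $A^{-1}(e^{rA}-\mathrm{Id}_V)Z_{jh}$ via the identity $\int_0^r e^{uA} v\,du = A^{-1}(e^{rA}-\mathrm{Id}_V)v$ (valid for all $v\in V$ since $0$ is in the resolvent set), reading off (i)--(ii) from this closed form, using the local mild-solution representation $Y_t = e^{(t-kh)A}Y_{kh} + \int_{kh}^t e^{(t-s)A}Z_{kh}\,ds$ with $Y_{kh}\in D(A)$ to obtain (iv)--(v), deducing (iii) from boundedness of $t\mapsto AY_t + Z_{\lfloor t\rfloor_h}$, and finally obtaining (vi) by piecewise integration and telescoping---is exactly the natural proof and is presumably what the cited reference does as well.
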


%%%%%%%%%%%%%%%%%%%%%%%%%%%%%%%%%%%%%%%%%%%%%%%%%%%%%%%%%%%
\begin{lemma}\label{lemma:cont}
Let $( V, \left\| \cdot \right\|_V )$ be an $ \R $-Banach space, let $ A \colon D(A) \subseteq V \to V$ be a generator of a strongly continuous semigroup, and let $ T \in (0,\infty) $, $ Y \in \mathbb{M}( [0,T] , V ) $, $ Z \in \mathcal{M}( \mathcal{B}( [0,T]), \mathcal{B}(V)  ) $ satisfy for all $ t \in [0,T] $ that $\sup_{ s \in (0,T) } \| Z_s \|_V < \infty$ and $Y_t = \int_0^t e^{ ( t - s ) A } \, Z_s \, ds$. Then it holds that $ Y $ is right-continuous.
\end{lemma}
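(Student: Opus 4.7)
The plan is to establish right-continuity at an arbitrary point $t \in [0,T)$ by writing the difference $Y_{t+h} - Y_t$ (for small $h > 0$) in a form that cleanly separates two effects: the ``shift'' of the semigroup action on the already-accumulated integral, and a short remainder integral over $[t, t+h]$. Using the semigroup identity $e^{(t+h-s)A} = e^{hA} e^{(t-s)A}$ on $s \in [0,t]$, I would write
\begin{equation*}
Y_{t+h} - Y_t \;=\; \bigl( e^{hA} - \Id_V \bigr) Y_t \;+\; \int_t^{t+h} e^{(t+h-s)A} \, Z_s \, ds.
\end{equation*}

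The first summand tends to $0$ in $V$ as $h \searrow 0$ by strong continuity of the semigroup $(e^{rA})_{r \in [0,\infty)}$ applied to the fixed vector $Y_t \in V$. For the second summand, I would invoke the standard fact that a strongly continuous semigroup is exponentially bounded, i.e.\ there exist $M \in [1,\infty)$ and $\omega \in \R$ with $\|e^{rA}\|_{L(V)} \leq M e^{\omega r}$ for all $r \in [0,\infty)$; combining this with the hypothesis $c := \sup_{s \in (0,T)} \|Z_s\|_V < \infty$ gives the pointwise bound
\begin{equation*}
\Bigl\| \int_t^{t+h} e^{(t+h-s)A} \, Z_s \, ds \Bigr\|_V \;\leq\; h \cdot M \, e^{|\omega| T} \cdot c,
\end{equation*}
which vanishes as $h \searrow 0$. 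Combining the two estimates yields $\lim_{h \searrow 0} Y_{t+h} = Y_t$, i.e.\ right-continuity at $t$.

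The only non-routine point worth checking en route is that the integrand $s \mapsto e^{(t+h-s)A} Z_s$ is indeed Bochner integrable on $[0, t+h]$: this follows from the strong continuity of the semigroup together with the Borel-measurability of $Z$ and the uniform bound on $\|Z_s\|_V$, so the integrals appearing in the decomposition are well-defined and the norm estimates above are legitimate. I do not anticipate a genuine obstacle here; the argument is essentially the standard one for right-continuity of mild convolutions, and the assumption $\sup_{s \in (0,T)} \|Z_s\|_V < \infty$ is precisely what is needed to control the remainder integral uniformly in $h$.
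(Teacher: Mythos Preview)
Your argument is correct, and in fact slightly cleaner than the paper's. Both proofs split $Y_{t+h}-Y_t$ into a contribution from $[0,t]$ and a short remainder over $[t,t+h]$, and both bound the remainder by $h$ times the uniform semigroup bound times $\sup_s\|Z_s\|_V$. The difference is in the first contribution: the paper leaves it as $\int_0^t\|(e^{(t+h-s)A}-e^{(t-s)A})Z_s\|_V\,ds$ and invokes Lebesgue's dominated convergence theorem (pointwise convergence of the integrand by strong continuity, domination by $2\sup_{r\in[0,T]}\|e^{rA}\|_{L(V)}\sup_s\|Z_s\|_V$), whereas you use the semigroup identity $e^{(t+h-s)A}=e^{hA}e^{(t-s)A}$ to pull $e^{hA}$ outside the integral and reduce the whole term to $(e^{hA}-\Id_V)Y_t$, which tends to zero by strong continuity at the single vector $Y_t$. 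Your route avoids dominated convergence altogether and makes the role of the semigroup property explicit; the paper's route is marginally more direct in that it does not need to justify commuting a bounded operator through a Bochner integral. Both are standard and equally valid here.
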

\begin{proof}[Proof of Lemma \ref{lemma:cont}.]
Note that for all $t \in [0, T), h \in (0, T-t]$ it holds that
\begin{align}
\begin{split}
&\|Y_{t+h} - Y_t \|_V = \left\| \int_0^{t+h} e^{ ( t+h - s ) A } \, Z_s \, ds - \int_0^t e^{ ( t - s ) A } \, Z_s \, ds \right\|_V\\
&\leq \int_0^t\left\| \left( e^{ ( t+h - s ) A } - e^{ ( t - s ) A }  \right) Z_s \right\|_V ds + \int_{t}^{t+h} \left\|e^{ ( t+h - s ) A } \, Z_s \right\|_V ds \\
& \leq \int_0^t\left\| \left( e^{ ( t+h - s ) A } - e^{ ( t - s ) A }  \right) Z_s \right\|_V ds + h \left( \sup\nolimits_{s \in [0,T]} \|e^{s A}\|_{L(V)} \right) \left(\sup\nolimits_{ s \in (0,T) } \| Z_s \|_V \right).
\end{split}
\end{align}
Combining Lebesgue's theorem of dominated convergence with the assumption that $ A \colon D(A) \subseteq V \to V$ is a generator of a strongly continuous semigroup and the assumption that $\sup_{ s \in (0,T) } \| Z_s \|_V < \infty$ hence yields that for all $t \in [0, T)$ it holds that
\begin{align}
\limsup_{h \searrow 0} \|Y_{t+h} - Y_t \|_V =0.
\end{align}
The proof of Lemma \ref{lemma:cont} is thus completed.
\end{proof}

%%%%%%%%%%%%%%%%%%%%%%%%%%%%%%%%%%%%%%%%%%%%%%%%%%%%%%%%%%%%%%%

\begin{lemma}\label{lemma:meas}
Let $ ( V, \left\| \cdot \right\|_V ) $ be a separable $\R$-Banach  space, let $ ( \Omega, \mathcal{F}, \P ) $ be a probability space, let $ A \in D(A) \subseteq V \to V $ be a generator of a strongly continuous  semigroup, let $O \colon [0,T] \times \Omega \to V $ be a stochastic process, and let $T, h \in (0, \infty)$, $Y \in \mathbb{M}([0, T] \times \Omega, V)$, $ F \in \mathcal{M}(\mathcal{B}(V^2), \mathcal{B}(V))$ satisfy for all $t \in [0,T]$ that $Y_t = \int_{0}^{t} e^{(t-s)A} \, F(Y_{\lf s \rf_h}, O_{\lf s \rf_h}) \, ds + O_t$. Then it holds that $Y \colon [0,T] \times \Omega \to V$ is a stochastic process and it holds that $Y - O \colon [0,T] \times \Omega \to V$ is a stochastic process with right-continuous sample paths.
\end{lemma}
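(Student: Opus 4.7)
The plan is to establish the $\F/\mathcal{B}(V)$-measurability of $Y_t$ by induction over the grid intervals $[jh, (j+1)h]$, $j \in \N_0$, and then to deduce right-continuity of the sample paths of $Y - O$ by a pathwise application of Lemma \ref{lemma:cont}. The key observation is that $\lf \cdot \rf_h \colon [0,T] \to [0,T]$ attains only finitely many values, so the integrand $s \mapsto F(Y_{\lf s \rf_h}, O_{\lf s \rf_h})$ is, for every fixed $\omega$, a step function in $s$. This simplifies both the measurability verification and the boundedness required by Lemma \ref{lemma:cont}.

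For the measurability of $Y_t$ for each $t \in [0, T]$, I would prove by induction on $k \in \N_0 \cup \{-1\}$ that the mapping $Y_t \colon \Omega \to V$ is $\F/\mathcal{B}(V)$-measurable for every $t \in [0, ((k+1)h) \wedge T]$. The base case $k = -1$ reduces to $Y_0 = O_0$, which is measurable since $O$ is a stochastic process. For the inductive step, assuming measurability of $Y_{jh}$ for $j \in \{0, 1, \ldots, k\}$, I decompose for $t \in [kh, ((k+1)h) \wedge T]$:
\begin{equation*}
Y_t
=
\smallsum_{j=0}^{k-1} \int_{jh}^{(j+1)h} e^{(t-s)A} F(Y_{jh}, O_{jh}) \, ds
+ \int_{kh}^{t} e^{(t-s)A} F(Y_{kh}, O_{kh}) \, ds
+ O_t.
\end{equation*}
Since $F$ is $\mathcal{B}(V^2)/\mathcal{B}(V)$-measurable, each composition $F(Y_{jh}, O_{jh}) \colon \Omega \to V$ is $\F/\mathcal{B}(V)$-measurable. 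Strong continuity of $s \mapsto e^{sA}$ on $[0,T]$ then allows me to realize each Bochner integral $\omega \mapsto \int_a^b e^{(t-s)A} F(Y_{jh}(\omega), O_{jh}(\omega)) \, ds$ as a pointwise limit of Riemann sums of the form $\smallsum_i (s_{i+1}-s_i)\, e^{(t-s_i)A} F(Y_{jh}, O_{jh})$, which are manifestly $\F/\mathcal{B}(V)$-measurable in $\omega$; hence the integral is $\F/\mathcal{B}(V)$-measurable as well. Summing finitely many such terms and adding the measurable $O_t$ yields measurability of $Y_t$, completing the induction.

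For the right-continuity of $Y - O$, I would fix $\omega \in \Omega$ and set $Z_s^\omega := F(Y_{\lf s \rf_h}(\omega), O_{\lf s \rf_h}(\omega))$ for $s \in [0, T]$. Since $\lf \cdot \rf_h$ takes only finitely many values on $[0, T]$, the map $s \mapsto Z_s^\omega$ also takes only finitely many values, and is therefore both bounded and $\mathcal{B}([0, T])/\mathcal{B}(V)$-measurable. Applying Lemma \ref{lemma:cont} to the function $[0,T] \ni t \mapsto (Y_t - O_t)(\omega) = \int_0^t e^{(t-s)A} Z_s^\omega \, ds$ then yields the desired right-continuity of this path for every $\omega \in \Omega$.

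The main obstacle is the $\F/\mathcal{B}(V)$-measurability of the vector-valued Bochner integrals appearing in the inductive decomposition; although the step-function structure in $s$ helps considerably, one still needs the standard Riemann-approximation argument (exploiting strong continuity of $s \mapsto e^{sA}$) together with separability of $V$ to transfer $\F/\mathcal{B}(V)$-measurability in $\omega$ from the endpoint values $Y_{jh}$ and $O_{jh}$ to the integral, which is the only nontrivial ingredient in the argument.
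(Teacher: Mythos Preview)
Your proposal is correct and follows essentially the same strategy as the paper: induction over the grid points $\{0, h, 2h, \ldots\}$ to establish $\F/\mathcal{B}(V)$-measurability of each $Y_t$, followed by a pathwise application of Lemma~\ref{lemma:cont} to obtain right-continuity of $Y-O$. The only minor presentational difference is that the paper exploits the semigroup property to write, for $t \in [kh \wedge T, (k+1)h \wedge T]$,
\[
Y_t = e^{(t-kh)A} Y_{kh} + \int_{kh}^{t} e^{(t-s)A} F(Y_{kh}, O_{kh}) \, ds + O_t - e^{(t-kh)A} O_{kh},
\]
so that the inductive step only invokes measurability of the single grid value $Y_{kh}$, whereas you expand the full sum over all earlier intervals; both are valid, and your version is arguably more explicit about why the Bochner integral is $\F/\mathcal{B}(V)$-measurable (the paper simply asserts this).
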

\begin{proof}[Proof of Lemma~\ref{lemma:meas}.]
First, we claim that for all $k \in \N_0 $ it holds that
\begin{align}\label{lem:meas}
\forall \, t \in [0, \min\{T, kh\}] \colon Y_t \in \mathcal{M}(\mathcal{F}, \mathcal{B}(V)).
\end{align}
In the following we prove \eqref{lem:meas} by induction on $k \in \N_0$. The base case $k=0$ follows from the fact that $Y_0= O_0 \in \mathcal{M}(\mathcal{F}, \mathcal{B}(V))$. 
Next observe that the fact that 
$ \forall \, t \in [0,T] \colon O_t \in \mathcal{M}( \mathcal{F}, \mathcal{B}( V ) ) $ 
shows that for all 
$ k \in \N_0 $, $ t \in [ \min\{T, kh\}, \min\{T, (k+1)h\}] $ 
with 
$
  \forall \, s \in [0, \min\{ T, k h \} ] \colon 
  Y_s \in \mathcal{M}( \mathcal{F}, \mathcal{B}( V ) )
$ 
it holds that
\begin{align}\label{meas:ind}
\begin{split}
Y_t  &=  e^{(t-\min\{T, kh\}) A} \, Y_{\min\{T, kh\}} + \int_{\min\{T, kh\}}^{t}  e^{ ( t - s ) A } \, F(Y_{\lf s \rf_h}, O_{\lf s \rf_h}) \, ds  \\
& \quad + O_t - e^{(t-\min\{T, kh\}) A} \, O_{\min\{T, kh\}} \\
& = e^{(t-\min\{T, kh\}) A} \, Y_{\min\{T, kh\}} + \int_{\min\{T, kh\}}^{t}  e^{ ( t - s ) A } \,F(Y_{\min\{ \lf T \rf_h, kh\}}, O_{\min\{ \lf T \rf_h, kh\}}) \, ds  \\
& \quad  +  O_t - e^{(t-\min\{T, kh\}) A} \, O_{\min\{T, kh\}} \in \mathcal{M}( \mathcal{F}, \mathcal{B}( V ) ).
\end{split}
\end{align}
The induction step $\N_0 \ni k \to k+1 \in \N$ follows from \eqref{meas:ind} and the induction hypothesis. Induction hence proves \eqref{lem:meas}. In the next step we observe that \eqref{lem:meas} together with the assumption that  $O \colon [0,T] \times \Omega \to V $ is a stochastic process ensures that $ Y - O \colon [0,T] \times \Omega \to V $ is also a stochastic process. This and Lemma \ref{lemma:cont} show that  $Y - O \colon [0,T] \times \Omega \to V$ is a stochastic process with right-continuous sample paths. The proof of Lemma \ref{lemma:meas} is thus completed.
\end{proof}

\subsection{Semi-globally Lipschitz continuous functions}
\label{subsec:lipschitz}

\begin{lemma}
\label{lemma:cond}
Let $ ( V, \left\| \cdot \right\|_V) $, 
$(\mathcal{V}, \left\| \cdot \right\|_{\mathcal{V}})$, 
$(W, \left\| \cdot \right\|_W)$, and $(\mathcal{W}, \left\| \cdot \right\|_{\mathcal{W}})$ 
be normed $\R$-vector spaces with $V \subseteq \mathcal{V}$ continuously
and $W \subseteq \mathcal{W}$ continuously 
and let $ \epsilon, \theta \in [0,\infty) $, $ \varepsilon, \vartheta \in (0, \infty) $, 
$ F \in \mathbb{M}(V, W)$ satisfy for all $v, w \in V$ that $\|F(v)- F(w)\|_W \leq \epsilon \,(1+ \|v\|_{\mathcal{V}}^{\varepsilon} + \|w\|_{\mathcal{V}}^{\varepsilon}) \, \|v-w\|_{\mathcal{V}}$, $\vartheta= 2 \varepsilon$, and
\begin{multline}
\theta = \max \!\bigg\{ \! 3 \, \epsilon^2 \bigg[\!\sup_{u \in W\backslash\{0\}}\tfrac{\|u\|^2_{\mathcal{W}}}{\|u\|^2_W}\bigg] \bigg[1+ \! \sup_{u \in V\backslash\{0\}}\tfrac{\|u\|_{\mathcal{V}}^{2 \varepsilon}}{\|u\|_V^{2 \varepsilon}} \bigg]\big(1+2^{\max\{2\varepsilon-1, 0\}}\big) , \\
 \big(8\epsilon^2 + 2\|F(0)\|_W^2\big) \max \!\bigg\{ 1, \sup_{u \in V\backslash\{0\}}\tfrac{\|u\|_{\mathcal{V}}^{2+2 \varepsilon}}{\|u\|_V^{2+2 \varepsilon}} \bigg\} \! \bigg\}.
\end{multline}
Then  it holds for all $v, w \in V$ that $\|F(v)\|_W^2 \leq \cthree \max\{ 1, \|v\|_V^{2 + \cfour} \}$ and
\begin{align}
\|F(v)- F(w)\|_{\mathcal{W}}^2 \leq \theta \max\{1, \|v\|_V^{\vartheta}\}\|v-w\|_{\mathcal{V}}^2 + \theta \, \|v-w\|_{\mathcal{V}}^{2+\vartheta}.
\end{align}
\end{lemma}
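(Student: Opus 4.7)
The plan is to handle the two claims separately, since each just involves massaging the hypothesis $\|F(v)-F(w)\|_W \le \epsilon(1+\|v\|_{\mathcal{V}}^{\varepsilon}+\|w\|_{\mathcal{V}}^{\varepsilon})\|v-w\|_{\mathcal{V}}$ and the continuous embeddings into a form that matches the explicit $\theta$. Let me abbreviate $C_W = \sup_{u \in W\setminus\{0\}} \frac{\|u\|_{\mathcal{W}}}{\|u\|_W}$, $C_V^{(r)} = \sup_{u \in V\setminus\{0\}} \frac{\|u\|_{\mathcal{V}}^{r}}{\|u\|_V^{r}}$, and $\alpha = 2^{\max\{2\varepsilon-1,0\}}$, all finite by the continuous-embedding hypothesis.

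For the growth bound $\|F(v)\|_W^2 \leq \theta \max\{1,\|v\|_V^{2+\vartheta}\}$, I would set $w = 0$ in the Lipschitz hypothesis to get $\|F(v)\|_W \leq \|F(0)\|_W + \epsilon(1+\|v\|_{\mathcal{V}}^{\varepsilon})\|v\|_{\mathcal{V}}$, then square using $(a+b)^2 \le 2a^2 + 2b^2$ and $(1+a)^2 \le 2(1+a^2)$ to obtain $\|F(v)\|_W^2 \le 2\|F(0)\|_W^2 + 4\epsilon^2\|v\|_{\mathcal{V}}^2 + 4\epsilon^2\|v\|_{\mathcal{V}}^{2+2\varepsilon}$. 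Since both $\|v\|_{\mathcal{V}}^2$ and $\|v\|_{\mathcal{V}}^{2+2\varepsilon}$ are bounded by $\max\{1,\|v\|_{\mathcal{V}}^{2+2\varepsilon}\}$ (split into the cases $\|v\|_{\mathcal{V}}\le 1$ and $\|v\|_{\mathcal{V}}>1$), I can factor out a common $\max\{1,\|v\|_{\mathcal{V}}^{2+2\varepsilon}\}$ to obtain the coefficient $8\epsilon^2 + 2\|F(0)\|_W^2$. Finally, I would use the inequality $\max\{1,\|v\|_{\mathcal{V}}^{2+2\varepsilon}\} \le \max\{1,C_V^{(2+2\varepsilon)}\}\,\max\{1,\|v\|_V^{2+2\varepsilon}\}$ to pass from the $\mathcal{V}$-norm to the $V$-norm, which exactly reproduces the second candidate in the maximum defining $\theta$.

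For the Lipschitz-type bound, I would square the hypothesis and apply $(1+a+b)^2 \le 3(1+a^2+b^2)$ to get $\|F(v)-F(w)\|_W^2 \le 3\epsilon^2(1+\|v\|_{\mathcal{V}}^{2\varepsilon}+\|w\|_{\mathcal{V}}^{2\varepsilon})\|v-w\|_{\mathcal{V}}^2$. The main move is to peel $\|w\|_{\mathcal{V}}^{2\varepsilon}$ off $\|v\|_{\mathcal{V}}^{2\varepsilon}$ and $\|v-w\|_{\mathcal{V}}^{2\varepsilon}$: the triangle inequality together with the elementary estimate $(a+b)^{2\varepsilon} \le 2^{\max\{2\varepsilon-1,0\}}(a^{2\varepsilon}+b^{2\varepsilon})$ yields $\|w\|_{\mathcal{V}}^{2\varepsilon} \le \alpha(\|v\|_{\mathcal{V}}^{2\varepsilon}+\|v-w\|_{\mathcal{V}}^{2\varepsilon})$, and rearranging gives the key inequality
\begin{equation*}
1 + \|v\|_{\mathcal{V}}^{2\varepsilon} + \|w\|_{\mathcal{V}}^{2\varepsilon} \le (1+\alpha)(1+\|v\|_{\mathcal{V}}^{2\varepsilon}) + \alpha\|v-w\|_{\mathcal{V}}^{2\varepsilon},
\end{equation*}
which uses only that $\alpha \ge 1$. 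Substituting back produces two summands: the first has $\|v-w\|_{\mathcal{V}}^2$ multiplied by $(1+\|v\|_{\mathcal{V}}^{2\varepsilon})$, and the second is the pure $\|v-w\|_{\mathcal{V}}^{2+2\varepsilon}$ term. I then bound $1+\|v\|_{\mathcal{V}}^{2\varepsilon} \le (1+C_V^{(2\varepsilon)})\max\{1,\|v\|_V^{2\varepsilon}\}$ by the same case split as before, multiply through by $C_W^2$ to pass from $\|\cdot\|_W$ to $\|\cdot\|_{\mathcal{W}}$, and check that the resulting coefficients $3\epsilon^2 C_W^2 (1+\alpha)(1+C_V^{(2\varepsilon)})$ on the first term and $3\epsilon^2 C_W^2 \alpha$ on the second term are both at most the first candidate in $\theta$'s maximum (using $(1+\alpha)(1+C_V^{(2\varepsilon)}) \ge 1+\alpha > \alpha$). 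The only real obstacle is the constant bookkeeping needed to ensure that no stray factor of $2$ or extra $\alpha$ appears that would exceed the stated $\theta$; the estimate above is tight because the $1$ in $1 + (1+\alpha)\|v\|_{\mathcal{V}}^{2\varepsilon}$ is absorbed into $(1+\alpha)\cdot 1$ rather than into $2\max\{1,\|v\|_{\mathcal{V}}^{2\varepsilon}\}$.
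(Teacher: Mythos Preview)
Your proposal is correct and follows essentially the same approach as the paper: both proofs square the Lipschitz hypothesis via $(a+b+c)^2\le 3(a^2+b^2+c^2)$, peel off $\|w\|_{\mathcal{V}}^{2\varepsilon}$ using the triangle inequality and the power estimate $(a+b)^{2\varepsilon}\le 2^{\max\{2\varepsilon-1,0\}}(a^{2\varepsilon}+b^{2\varepsilon})$, and then absorb constants into $\max\{1,\|v\|_V^{2\varepsilon}\}$ via the embedding $V\hookrightarrow\mathcal{V}$; the growth bound is handled identically by setting $w=0$ and squaring. The only cosmetic difference is that you isolate the intermediate inequality $1+\|v\|_{\mathcal{V}}^{2\varepsilon}+\|w\|_{\mathcal{V}}^{2\varepsilon}\le (1+\alpha)(1+\|v\|_{\mathcal{V}}^{2\varepsilon})+\alpha\|v-w\|_{\mathcal{V}}^{2\varepsilon}$ explicitly, whereas the paper merges this step with the passage to $\|v\|_V^{2\varepsilon}$ in one display.
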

\begin{proof}[Proof of Lemma~\ref{lemma:cond}]
Combining the fact that $\forall \, a, b, c \in \R \colon (a+b+c)^2 \leq 3(a^2+b^2+c^2)$ and the fact that $\forall \, a, b, x \in [0, \infty) \colon (a+b)^{x} \leq 2^{\max\{x-1, 0\}} (a^x + b^x)$ with the triangle inequality proves that for all $v, w \in V$ it holds that
\begin{align}\label{eq:lemma:cond}
\begin{split}
&\|F(v)- F(w)\|_{\mathcal{W}}^2  \leq  \bigg[\sup_{u \in W\backslash\{0\}}\tfrac{\|u\|_{\mathcal{W}}}{\|u\|_W}\bigg]^2 \|F(v)- F(w)\|_W^2 \\
&\leq   3  \, \epsilon^2 \bigg[\!\sup_{u \in W\backslash\{0\}}\tfrac{\|u\|_{\mathcal{W}}}{\|u\|_W} \bigg]^2 \big(1+ \|v\|_{\mathcal{V}}^{2\varepsilon} + \|w\|_{\mathcal{V}}^{2\varepsilon}\big)  \|v-w\|_{\mathcal{V}}^2\\
& \leq 3 \, \epsilon^2 \bigg[\!\sup_{u \in W\backslash\{0\}}\tfrac{\|u\|_{\mathcal{W}}}{\|u\|_W} \bigg]^2 \Big(1+ (1+2^{\max\{2\varepsilon-1, 0\}})\|v\|_{\mathcal{V}}^{2\varepsilon} + 2^{\max\{2\varepsilon-1, 0\}} \|v-w\|_{\mathcal{V}}^{2\varepsilon}\Big)  \|v-w\|_{\mathcal{V}}^2 \\
& \leq 3 \, \epsilon^2\bigg[\!\sup_{u \in W\backslash\{0\}}\tfrac{\|u\|_{\mathcal{W}}}{\|u\|_W} \bigg]^2  \! \bigg(\!1+ \bigg[\!\sup_{u \in V\backslash\{0\}}\tfrac{\|u\|_{\mathcal{V}}}{\|u\|_V}\bigg]^{2 \varepsilon}\big(1+2^{\max\{2\varepsilon-1, 0\}} \big)\|v\|_V^{2\varepsilon} + 2^{\max\{2\varepsilon-1, 0\}} \|v-w\|_{\mathcal{V}}^{2\varepsilon}\bigg) \|v-w\|_{\mathcal{V}}^2 \\
& \leq 3 \, \epsilon^2 \bigg[\!\sup_{u \in W\backslash\{0\}}\tfrac{\|u\|_{\mathcal{W}}}{\|u\|_W}\bigg]^2 \! \bigg(\!1+ \bigg[\!\sup_{u \in V\backslash\{0\}}\tfrac{\|u\|_{\mathcal{V}}}{\|u\|_V}\bigg]^{2 \varepsilon} \bigg)\big(1+2^{\max\{2\varepsilon-1, 0\}}\big) \big(\max\{1, \|v\|_V^{2\varepsilon}\}+ \|v-w\|_{\mathcal{V}}^{2\varepsilon}\big) \|v-w\|_{\mathcal{V}}^2\\
& \leq \theta \max\{1, \|v\|_V^{\vartheta}\} \, \|v-w\|_{\mathcal{V}}^2 + \theta \, \|v-w\|_{\mathcal{V}}^{2+\vartheta} .
\end{split}
\end{align}
Moreover, the fact that $\forall \, a, b \in \R \colon (a+b)^2 \leq 2(a^2+b^2)$ and the triangle inequality imply that for all $v \in U$ it holds that
\begin{align}
\begin{split}
&\|F(v)\|_W^2  \leq \big(\|F(v)-F(0)\|_W +\|F(0)\|_W\big)^2 \leq 2\big(\|F(v)-F(0)\|_W^2 + \|F(0)\|_W^2 \big) \\
& \leq 2\big(\epsilon^2(1+ \|v\|_{\mathcal{V}}^{\varepsilon})^2\|v\|_{\mathcal{V}}^2 + \|F(0)\|_W^2  \big) \leq 4\epsilon^2 \big(1+ \|v\|_{\mathcal{V}}^{2\varepsilon}\big)\|v\|_{\mathcal{V}}^2 + 2 \|F(0)\|_W^2 \\
& \leq \big(8\epsilon^2 + 2\|F(0)\|_W^2\big)\max\{ 1, \|v\|_{\mathcal{V}}^{2 + 2\varepsilon} \} \\
& \leq \big(8\epsilon^2 + 2\|F(0)\|_W^2\big) \max \!\Big\{ 1, \Big[\sup\nolimits_{u \in V\backslash\{0\}}\tfrac{\|u\|_{\mathcal{V}}}{\|u\|_V}\Big]^{2+2 \varepsilon} \Big\} \max\{ 1, \|v\|_V^{2 + 2\varepsilon} \}  \leq  \cthree \max\{ 1, \|v\|_V^{2 + \cfour} \}.
\end{split}
\end{align}
This and \eqref{eq:lemma:cond} complete the proof of Lemma \ref{lemma:cond}.
\end{proof}

\subsection{A priori bounds}\label{subsec:bounds}

\begin{prop}[A priori bounds]
\label{prop:priori_bound}
Let $ ( H, \left< \cdot , \cdot \right>_H, \left\| \cdot \right\|_H ) $ be a separable $\mathbb{R}$-Hilbert space, let $ ( \Omega, \mathcal{F}, \P ) $ be a probability space, let $ A \in L(H) $ be a diagonal linear operator with $\sup\!\big( \sigma_p(A) \big) < 0$ (see, e.g., Definition 3.4.5 in  \cite{Jentzen2015SPDElecturenotes}), let $ ( H_r, \left< \cdot , \cdot \right>_{ H_r }, \left\| \cdot \right\|_{ H_r } ) $, $ r \in \R $, be a family of interpolation spaces associated to $ - A $ (see, e.g., Definition~3.5.25 in \cite{Jentzen2015SPDElecturenotes}), and let $ Y, O \colon [0,T] \times \Omega \to H$ be stochastic processes, and let $F \in \mathcal{C}(H, H)$, $\phi, \Phi \in \mathcal{M}( \mathcal{B}(H) , \mathcal{B}([0,\infty)))$, $ \ctwo \in [0,1)$, $\alpha \in [0,\nicefrac{1}{2}]$, $\rho \in [0,1-\alpha)$, $ \varrho \in [\rho, \rho +1]$,  $ \cthree , \cfour \in [0,\infty)$, $T \in (0, \infty)$, $\chi \in \R$, 
$ h \in (0, 1] $ satisfy for all $v, w \in H_1 $, $t \in [0,T]$  that $\left< v, F( v + w ) \right>_H \leq \frac{1}{2} \phi(w) \| v \|^2_H+ \ctwo \| v \|^2_{ H_{ 1 / 2 } }+ \frac{1}{2}\Phi( w )$, $ \left\| F(v) - F(w) \right\|_{H_{ - 1 / 2 }}^2\leq \cthree \max\{ 1, \|v\|_{ H_{ \varrho } }^{\cfour} \} \|v-w\|_{ H_{ \rho } }^2 + \cthree \, \|v-w\|^{2 + \cfour}_{ H_{ \rho } }$, $\|F(v)\|_{H_{ - \alpha }}^2 \leq \cthree \max\{ 1, \|v\|_{ H_{ \varrho } }^{2 + \cfour} \} $,   and
\begin{equation}
	\label{eq:scheme_continuous}
	Y_t = e^{ t A} \big( Y_0 - O_0 \big)  + \int_0^t e^{ ( t - s ) A } \, \mathbbm{1}_{ \{ \| Y_{ \lf s \rf_{h} } \|_{ H_{ \varrho } } + \| O_{ \lf s \rf_{h} } \|_{ H_{ \varrho } } \leq h^{ - \chi } \}} \, F \big(  Y_{ \lf s \rf_{ h } } \big) \, ds + O_t.
\end{equation}
Then it holds for all $ t \in [0, T]$  that
\begin{align}
\begin{split}
& \| Y_t -O_t \|^2_H + (1-\ctwo) \int_0^t  e^{\int_s^t \phi( O_{\fl{u} } ) \, du } \, \| Y_s -O_s \|^2_{H_{1/2}} \, ds \leq e^{\int_0^t \phi(O_{\fl{s}}) \, ds } \, \| Y_0 -O_0 \|^2_H \\
& +   \int_0^te^{ \int_s^t \phi( O_{\fl{u} } ) \, du}  \left[   \Phi( O_{ \lf s \rf_{h } }) + \tfrac{  \cthree \,    (1+ \sqrt{\cthree})^{2 + \cfour} \,   h^{ 2\min \{(1 + \cfour /2)( 1-\alpha - \rho - (1+ \cfour/2)\chi) , \varrho - \rho - (1+\cfour/2) \chi ,1-\alpha - \rho -( 1 + \cfour ) \chi \}}  }{(1/2-\ctwo/2)\, ( 1-\alpha - \rho)^{2 + \cfour}}\right] ds.
\end{split}
\end{align}
\end{prop}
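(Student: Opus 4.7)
The plan is to establish the bound pathwise by deriving an ODE-type inequality for $\|Y_t - O_t\|_H^2$ and closing it via Gronwall. Fix $\omega \in \Omega$ and set $Z_t := Y_t - O_t$; by \eqref{eq:scheme_continuous},
\begin{equation}
Z_t = e^{tA} Z_0 + \int_0^t e^{(t-s)A}\, G_s\, ds, \qquad G_s := \mathbbm{1}_{\{\|Y_{\fl s}\|_{H_\varrho} + \|O_{\fl s}\|_{H_\varrho} \leq h^{-\chi}\}}\, F(Y_{\fl s}),
\end{equation}
with $G$ constant on each interval of the grid $\{0, h, 2h, \ldots\}$. Since $A \in L(H)$ is diagonal with $\sup \sigma_p(A) < 0$, Lemma~\ref{lem:YX_in_DA} applied to the integral part (with the analytic semigroup term $e^{\cdot A} Z_0$ treated directly) yields that $t \mapsto Z_t$ is continuous into $D(A) = H$ and satisfies $\tfrac{d}{dt} Z_t = A Z_t + G_t$ off the grid. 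Combining this with $\langle Z_t, A Z_t\rangle_H = -\|Z_t\|_{H_{1/2}}^2$ gives, for $t \in [0,T] \setminus \{0, h, 2h, \ldots\}$,
\begin{equation}
\tfrac{d}{dt}\|Z_t\|_H^2 = -2 \|Z_t\|_{H_{1/2}}^2 + 2 \mathbbm{1}_{\{\ldots\}} \langle Z_t, F(Y_{\fl t})\rangle_H.
\end{equation}

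Next I would split $\langle Z_t, F(Y_{\fl t})\rangle_H$ as $\langle Z_t, F(Z_t + O_{\fl t})\rangle_H + \langle Z_t, F(Y_{\fl t}) - F(Z_t + O_{\fl t})\rangle_H$, bound the first summand via the coercivity hypothesis (applied with $v = Z_t$, $w = O_{\fl t}$) and the second via the $H_{1/2}/H_{-1/2}$ duality combined with Young's inequality $2ab \leq (1-\varphi)\, a^2 + (1-\varphi)^{-1}\, b^2$. Absorbing the $\|Z_t\|_{H_{1/2}}^2$ terms (the total coefficient is $-2 + 2\varphi + (1-\varphi) = -(1-\varphi)$) yields the differential inequality
\begin{equation}
\tfrac{d}{dt}\|Z_t\|_H^2 + (1-\varphi)\|Z_t\|_{H_{1/2}}^2 \leq \phi(O_{\fl t})\|Z_t\|_H^2 + \Phi(O_{\fl t}) + \tfrac{\mathbbm{1}_{\{\ldots\}}}{1-\varphi}\, \|F(Y_{\fl t}) - F(Z_t + O_{\fl t})\|_{H_{-1/2}}^2.
\end{equation}
Multiplying by $\exp(-\int_0^t \phi(O_{\fl s})\, ds)$, integrating over $[0,t]$, and multiplying back then produces a Gronwall-type bound in exactly the shape claimed, provided the final error term can be controlled by a suitable power of $h$ uniformly in $t \in [0,T]$.

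The technical heart of the proof, and the expected main obstacle, is this error estimate. Since $Y_{\fl t} - (Z_t + O_{\fl t}) = Z_{\fl t} - Z_t$, the Lipschitz hypothesis on $F$ reduces the task to bounding $\|Z_t - Z_{\fl t}\|_{H_\rho}$ on the indicator event. I would write $Z_t - Z_{\fl t} = (e^{(t-\fl t)A} - \Id_H) Z_{\fl t} + \int_{\fl t}^t e^{(t-s)A}\, G_{\fl t}\, ds$ and use the analytic-semigroup smoothing estimates $\|(e^{sA} - \Id_H)(-A)^{-\gamma}\|_{L(H)} \leq s^\gamma$ for $\gamma \in [0,1]$ and $\|(-A)^\gamma e^{sA}\|_{L(H)} \leq (\gamma/s)^\gamma$ for $\gamma \geq 0$, $s > 0$, combined with the indicator bounds $\|Z_{\fl t}\|_{H_\varrho} \leq 2 h^{-\chi}$ and $\|F(Y_{\fl t})\|_{H_{-\alpha}}^2 \leq \cthree \max\{1, h^{-\chi(2+\cfour)}\}$, to arrive at an estimate of the form
\begin{equation}
\|Z_t - Z_{\fl t}\|_{H_\rho} \leq 2\, h^{\varrho - \rho - \chi} + \tfrac{\sqrt{\cthree}}{1-\alpha-\rho}\, h^{1-\alpha-\rho-\chi(1+\cfour/2)}.
\end{equation}
Substituting into the Lipschitz estimate and expanding produces four powers of $h$; three of them, $2(\varrho-\rho-(1+\cfour/2)\chi)$, $2(1-\alpha-\rho-(1+\cfour)\chi)$, and $(2+\cfour)(1-\alpha-\rho-(1+\cfour/2)\chi)$, are precisely the three entries of the minimum in the statement, and the fourth, $(2+\cfour)(\varrho-\rho-\chi)$, is dominated by the first since $\varrho \geq \rho$ and $h \in (0,1]$. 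Collecting prefactors via $\max\{2, \sqrt{\cthree}/(1-\alpha-\rho)\} \leq (1+\sqrt{\cthree})/(1-\alpha-\rho)$ and raising to the $(2+\cfour)$-th power yields the displayed constant $\cthree(1+\sqrt{\cthree})^{2+\cfour}/(1-\alpha-\rho)^{2+\cfour}$, while weakening $(1-\varphi)^{-1}$ to $(1/2-\varphi/2)^{-1}$ matches the precise form of the constant in the proposition.
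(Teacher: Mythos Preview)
Your approach is essentially identical to the paper's: define $\bar Y=Y-O$, use Lemma~\ref{lem:YX_in_DA} to get the ODE, split $F(Y_{\lf t\rf_h})=F(\bar Y_t+O_{\lf t\rf_h})+\big[F(Y_{\lf t\rf_h})-F(\bar Y_t+O_{\lf t\rf_h})\big]$, apply coercivity to the first piece and $H_{1/2}/H_{-1/2}$ duality plus Young to the second, and close by estimating $\|\bar Y_t-\bar Y_{\lf t\rf_h}\|_{H_\rho}$ via the semigroup smoothing estimates together with the indicator bound. The paper multiplies by the integrating factor $e^{-\int_0^t\phi}$ from the outset rather than at the end, but this is cosmetic.

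One small bookkeeping slip: on the indicator event you have $\|Z_{\lf t\rf_h}\|_{H_\varrho}=\|Y_{\lf t\rf_h}-O_{\lf t\rf_h}\|_{H_\varrho}\leq\|Y_{\lf t\rf_h}\|_{H_\varrho}+\|O_{\lf t\rf_h}\|_{H_\varrho}\leq h^{-\chi}$, not $2h^{-\chi}$. Your factor~$2$ is harmless as an upper bound on $\|Z_t-Z_{\lf t\rf_h}\|_{H_\rho}$, but it breaks your constant-matching step, since $\max\{2,\sqrt{\cthree}/(1-\alpha-\rho)\}\leq(1+\sqrt{\cthree})/(1-\alpha-\rho)$ fails for small $\cthree$ and small $\alpha,\rho$. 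With the correct factor~$1$ the inequality $\max\{1,\sqrt{\cthree}/(1-\alpha-\rho)\}\leq(1+\sqrt{\cthree})/(1-\alpha-\rho)$ is immediate, and the rest of your argument goes through verbatim.
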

\begin{proof}[Proof of Proposition \ref{prop:priori_bound}.]
	
Throughout this proof let $\Omega_t \subseteq \Omega $, $ t \in [0,T] $, be the sets with the property that for all $ t \in [0,T]$ it holds that $ \Omega_t = \big\{ \| Y_{ \lf t \rf_{h} } \|_{ H_{ \varrho } } + \| O_{ \lf t \rf_{h} } \|_{ H_{ \varrho } }\leq h^{ - \chi } \big\}$ and let $\bar{Y} \in \mathbb{M}( [0,T], H )$ be the function with the property that for all $ t \in [0,T] $ it holds that $\bar{Y}_t = Y_t - O_t$. Next note that for all $t \in [0,T]$ it holds that 
\begin{align}
  \bar{Y}_t = e^{ t A } \bar{Y}_0 
  + \int_0^t e^{  ( t - s ) A} \mathbbm{1}_{\Omega_s} F(Y_{ \lf s \rf_{h } }) \, ds.
\end{align}
Lemma \ref{lem:YX_in_DA} hence proves that $\bar{Y}$ has continuous sample paths and that for all $ t \in [0,T] $ it holds that $\bar{Y}_t = \bar{Y}_0 + \int_0^t A \bar{Y}_s + \mathbbm{1}_{\Omega_s} F\big(Y_{ \lf s \rf_{h} }\big)\, ds$. The fundamental theorem of calculus therefore ensures for all $ t \in [0,T] $ that
\begin{equation}
\label{eq:fund_theorem}
\begin{split}
 & e^{- \int_0^t  \phi( O_{\fl{s} } ) \, ds} \, \| \bar{Y}_t \|^2_H \\
 & =\|\bar{Y}_0 \|^2_H +  \int_0^t 2 \, e^{- \int_0^s \phi( O_{\fl{u} } ) \, du} \,  \big< \bar{Y}_s, A \bar{Y}_s + \mathbbm{1}_{\Omega_s}
 F\big( Y_{ \lf s \rf_{h} }\big) \big>_H \, ds - \int_0^t  \phi( O_{\fl{s} } ) \, e^{- \int_0^s \phi( O_{\fl{u} } ) \, du} \, \| \bar{Y}_s \|^2_H\, ds \\
 & =\| \bar{Y}_0 \|^2_H + 2 \int_0^t e^{- \int_0^s \phi( O_{\fl{u} } ) \, du} \,  \big< \bar{Y}_s, A \bar{Y}_s + \mathbbm{1}_{ \Omega_s} F\big( \bar{Y}_s +
O_{ \lf s \rf_{h} } \big) \big>_H \, ds \\
& + 2 \int_0^t e^{- \int_0^s \phi( O_{\fl{u} } ) \, du} \, \mathbbm{1}_{ \Omega_s} \big< \bar{Y}_s, F\big(Y_{ \lf s \rf_{h} }\big)- F\big( \bar{Y}_s +
O_{\lf s \rf_{h} } \big) \big>_H \, ds - \int_0^t \phi( O_{\fl{s} } ) \, e^{- \int_0^s \phi( O_{\fl{u} } ) \, du}  \, \| \bar{Y}_s \|^2_H \, ds.
\end{split}
\end{equation}
Next observe that for all $ s \in [0, T]$ it holds that
\begin{align}
\begin{split}
 \big< \bar{Y}_s , A \bar{Y}_s \big>_H + \mathbbm{1}_{\Omega_s}  \tfrac{(1-\varphi)}{2} \|\bar{Y}_s \|^2_{H_{1/2}} & \leq \big< \bar{Y}_s , A \bar{Y}_s \big>_H +  \tfrac{(1-\varphi)}{2} \|\bar{Y}_s \|^2_{H_{1/2}}  = \big< \bar{Y}_s , A \bar{Y}_s \big>_H  - \tfrac{(1-\varphi)}{2} \big< \bar{Y}_s , A \bar{Y}_s \big>_H \\
& = \big< \bar{Y}_s , \big[1-\tfrac{(1-\varphi)}{2}  \big] A \bar{Y}_s \big>_H = \big< \bar{Y}_s , \tfrac{(1+\varphi)}{2} A \bar{Y}_s \big>_H  .
\end{split}
\end{align}
This, \eqref{eq:fund_theorem}, and the 
Cauchy-Schwartz inequality show  for all  $ t \in [0,T] $ that
\begin{equation}
\begin{split}
&  e^{- \int_0^t \phi( O_{\fl{s} } ) \, ds} \, \| \bar{Y}_t \|^2_H  = \| \bar{Y}_0 \|^2_H + 2 \int_0^t e^{- \int_0^s \phi( O_{\fl{u} } ) \, du}  \,  \big< \bar{Y}_s, A \bar{Y}_s + \mathbbm{1}_{ \Omega_s} F(\bar{Y}_s + O_{ \lf s \rf_{h } }) \big>_H \, ds\\
 & + 2 \int_0^t e^{- \int_0^s \phi( O_{\fl{u} } ) \, du}  \, \mathbbm{1}_{\Omega_s}  \big<( - A )^{ 1 / 2 } \bar{Y}_s ,  ( - A )^{ - 1 / 2 }\big[F( Y_{ \lf s \rf_{h } } )- F( \bar{Y}_s + O_{ \lf s \rf_{h } })\big] \big>_H \, ds \\
 &-   \int_0^t \phi( O_{\fl{s} } ) \, e^{- \int_0^s \phi( O_{\fl{u} } ) \, du}  \, \| \bar{Y}_s \|^2_H \, ds \\
& \leq \| \bar{Y}_0 \|^2_H + 2\int_0^t  e^{- \int_0^s \phi( O_{\fl{u} } ) \, du} \, \big<\bar{Y}_s , \tfrac{(1+\ctwo)}{2}  A \bar{Y}_s + \mathbbm{1}_{\Omega_s} F\big( \bar{Y}_s + O_{ \lf s \rf_{h} } \big) \big>_H \, ds \\ 
& + 2 \int_0^t  e^{- \int_0^s \phi( O_{\fl{u} } ) \, du} \, \mathbbm{1}_{\Omega_s} \Bigl[  \|\bar{Y}_s 
\|_{ H_{ 1 / 2 } } \|F( Y_{ \lf s \rf_{h } })-F( \bar{Y}_s +O_{ \lf s \rf_{h } }) \|_{ H_{ - 1 / 2 } } - \tfrac{(1- \ctwo)}{2} \| \bar{Y}_s \|_{ H_{ 1 / 2 } }^2 \Bigr]\, ds \\
&-\int_0^t \phi( O_{\fl{s} } ) \, e^{- \int_0^s \phi( O_{\fl{u} } ) \, du}  \, \| \bar{Y}_s \|^2_H \, ds.
\end{split}
\end{equation}
The fact that $\forall \, a , b \in \R , \varepsilon \in (0,\infty) \colon a b \leq \varepsilon  a^2 + \frac{ b^2 }{ 4 \varepsilon }$ therefore proves for all $ t \in [0, T]$ that
\begin{equation}
\label{eq:estimate0}
\begin{split}
 e^{- \int_0^t \phi( O_{\fl{s} } ) \, ds} \, \| \bar{Y}_t \|^2_H   & \leq \| \bar{Y}_0 \|^2_H +  \int_0^t e^{- \int_0^s \phi( O_{\fl{u} } ) \, du} \,  \big< \bar{Y}_s , (1+\ctwo)  A \bar{Y}_s +  2 \mathbbm{1}_{\Omega_s}  F(\bar{Y}_s +O_{ \lf s \rf_{h } })\big>_H \, ds\\
 &  +\tfrac{ 1 }{( 1- \ctwo ) }\int_0^t e^{- \int_0^s \phi( O_{\fl{u} } ) \, du}  \,  \mathbbm{1}_{\Omega_s} \|F( Y_{ \lf s \rf_{h } })-F( \bar{Y}_s + O_{ \lf s \rf_{h } })\|_{ H_{ - 1 / 2 } }^2 \, ds \\
 & -  \int_0^t \phi( O_{\fl{s} } ) \, e^{- \int_0^s \phi( O_{\fl{u} } ) \, du}  \, \| \bar{Y}_s \|^2_H\, ds.
\end{split}
\end{equation}
In the next step we use the fact that  $ \forall \, v, w \in H_1=H \colon \| F(v) - F(w) \|_{H_{ - 1 / 2 }}^2\leq \cthree \max\{ 1, \|v\|_{ H_{ \varrho } }^{\cfour} \} \|v-w\|_{ H_{ \rho } }^2 + \cthree \, \|v-w\|^{2 + \cfour}_{ H_{ \rho } }$  to obtain for all $v, w \in H_1=H$,  $ s \in [0, T]$  that
\begin{equation}
\label{eq:estimate1}
\begin{split}
& \mathbbm{1}_{\Omega_s} \|F(Y_{ \lf s \rf_{h } })- F(\bar{Y}_s + O_{ \lf s \rf_{h } }) \|_{ H_{ - 1 / 2 } }^2 \\
 & \leq \mathbbm{1}_{\Omega_s} \cthree    \max\{ 1, \|Y_{ \lf s \rf_{h } }\|_{ H_{ \varrho } }^{\cfour} \} \, \| \bar{Y}_{ \lf s \rf_{h } } - \bar{Y}_s \|_{ H_{ \rho } }^2 +  \mathbbm{1}_{\Omega_s} \cthree   \, \| \bar{Y}_{ \lf s \rf_{h } } - \bar{Y}_s \|_{ H_{ \rho } }^{2 + \cfour}  \\
& \leq \mathbbm{1}_{\Omega_s} \cthree   \, \| \bar{Y}_{ \lf s \rf_{h } } - \bar{Y}_s \|_{ H_{ \rho } }^2 \left( h^{-\cfour \chi} +   \| \bar{Y}_{ \lf s \rf_{h } } - \bar{Y}_s \|_{ H_{ \rho } }^{\cfour} \right).
\end{split}
\end{equation}
Moreover, observe that, e.g., Theorem~4.7.6 in  \cite{Jentzen2015SPDElecturenotes} and, e.g., Lemma 4.7.7 in  \cite{Jentzen2015SPDElecturenotes}  imply that for all $ s \in [0, T]$ it holds that 
\begin{equation}
\label{eq:estimate2}
\begin{split}
&\mathbbm{1}_{\Omega_s}  \|\bar{Y}_{ \lf s \rf_{h } } - \bar{Y}_s \|_{ H_{ \rho } } = \mathbbm{1}_{\Omega_s} \Big\| \left( e^{ ( s - \lf s \rf_{h } ) A }- \operatorname{Id}_H \right) \bar{Y}_{ \lf s \rf_{h } } + \smallint_{\lf s \rf_{h }}^s e^{  ( s - u ) A } \, F( Y_{ \lf s \rf_{h } } ) \, du \Big\|_{ H_{ \rho }} \\ 
& \leq \mathbbm{1}_{\Omega_s}\Big[\left| s - \lf s \rf_{h } \right|^{ \varrho - \rho }\| \bar{Y}_{ \lf s \rf_{h } } \|_{ H_{ \varrho }} + \smallint_{ \lf s \rf_{h }}^s \left( s - u \right)^{ - (\alpha+ \rho)} \| F( Y_{ \lf s \rf_{h } } ) \|_{ H_{ - \alpha }} \, du \Big] \\ 
& \leq \mathbbm{1}_{ \Omega_s}\Big[ \left|s - \lf s \rf_{h } \right|^{ \varrho -\rho }\| \bar{Y}_{ \lf s \rf_{h } } \|_{H_{ \varrho }} + \tfrac{\left| s - \lf s \rf_{h } \right|^{ 1-\alpha - \rho}}{1-\alpha - \rho} \sqrt{\cthree} \max\big\{ 1, \|Y_{ \lf s \rf_{h } }\|_{ H_{ \varrho } }^{1 + \nicefrac{\cfour}{2}} \big\}  \Big]\\
& \leq \tfrac{\mathbbm{1}_{\Omega_s}}{ 1-\alpha - \rho} \left[h^{ \varrho- \rho } \,\| \bar{Y}_{ \lf s \rf_{h } } \|_{ H_{ \varrho }}+ \sqrt{\cthree} \, h^{ 1-\alpha - \rho } \max\{ 1, h^{-\chi - \cfour \chi/2} \}\right] \\
& \leq  \tfrac{\mathbbm{1}_{\Omega_s}}{ 1-\alpha - \rho}  \left[ h^{ \varrho -\rho - \chi }+ \sqrt{\cthree} \, h^{ 1-\alpha - \rho -\chi - \cfour \chi/2} \right] \leq \frac{ (1+ \sqrt{\cthree}) \,  h^{\min \{ \varrho - \rho - \chi ,  1-\alpha - \rho -( 1 + \cfour / 2 ) \chi \}}  }{( 1-\alpha - \rho)}.
\end{split}
\end{equation}
Putting \eqref{eq:estimate2} into \eqref{eq:estimate1} shows for all $ s \in [0, T]$ that
\begin{equation}
\label{eq:estimate3}
\begin{split}
& \mathbbm{1}_{\Omega_s} \| F( Y_{ \lf s \rf_{h } })- F( \bar{Y}_s + O_{ \lf s \rf_{h } }) \|_{ H_{ - 1 / 2 } }^2 \\
& \leq \frac{ \cthree \,    (1+ \sqrt{\cthree})^{2} \,   h^{2\min \{ \varrho - \rho - \chi ,  1-\alpha - \rho -( 1 + \cfour / 2 ) \chi \}}  }{( 1-\alpha - \rho)^2} \left(  h^{-\cfour \chi}+ \frac{ (1+ \sqrt{\cthree})^{\cfour} \, h^{\cfour \min \{ \varrho - \rho - \chi ,  1-\alpha - \rho -( 1 + \cfour / 2 ) \chi \}}  }{( 1-\alpha - \rho)^{\cfour}}\right) \\
& \leq \frac{ 2 \,\cthree \,    (1+ \sqrt{\cthree})^{2 + \cfour} \,   h^{2\min \{ \varrho - \rho - \chi ,  1-\alpha - \rho -( 1 + \cfour / 2 ) \chi \}} \, h^{\cfour \min \{ \varrho - \rho - \chi ,  1-\alpha - \rho -( 1 + \cfour / 2 ) \chi, -\chi \} } }{( 1-\alpha - \rho)^{2+\cfour}}\\
& = \frac{ 2 \, \cthree \,    (1+ \sqrt{\cthree})^{2 + \cfour} \,  h^{2\min \{ \varrho - \rho - \chi ,  1-\alpha - \rho -( 1 + \cfour / 2 ) \chi \}} \, h^{\cfour \min \{   1-\alpha - \rho -( 1 + \cfour / 2 ) \chi, -\chi \} } }{( 1-\alpha - \rho)^{2+\cfour}}\\
& = \frac{ 2 \, \cthree \,    (1+ \sqrt{\cthree})^{2 + \cfour} \,   h^{ 2\min \{(1 + \cfour /2)( 1-\alpha - \rho - (1+ \cfour/2)\chi) , \varrho - \rho - (1+\cfour/2) \chi ,1-\alpha - \rho -( 1 + \cfour ) \chi \}}  }{( 1-\alpha - \rho)^{2 + \cfour}}.
\end{split}
\end{equation}
In the next step we put \eqref{eq:estimate3} into \eqref{eq:estimate0} to obtain that for all $t \in [0, T]$ it holds that
\begin{equation}
\begin{split}
& e^{-\int_0^t  \phi( O_{\fl{s} } ) \, ds } \, \| \bar{Y}_t \|^2_H  \leq \|\bar{Y}_0 \|^2_H  +  \int_0^te^{- \int_0^s \phi( O_{\fl{u} } ) \, du}  \left[ -(1+\varphi) \|\bar{Y}_s\|_{H_{1/2}}^2 + 2   \mathbbm{1}_{\Omega_s} \big<\bar{Y}_s ,   F\big( \bar{Y}_s + O_{ \lf s \rf_{h} } \big) \big>_H  \right] ds \\
&+ \tfrac{  \cthree \,    (1+ \sqrt{\cthree})^{2 + \cfour} \,   h^{ 2\min \{(1 + \cfour /2)( 1-\alpha - \rho - (1+ \cfour/2)\chi) , \varrho - \rho - (1+\cfour/2) \chi ,1-\alpha - \rho -( 1 + \cfour ) \chi \}}   }{(1/2-\ctwo/2)\, ( 1-\alpha - \rho)^{2 + \cfour}} \int_0^te^{- \int_0^s \phi( O_{\fl{u} } ) \, du}  \, ds\\ 
&-    \int_0^t \phi( O_{\fl{s} } ) \, e^{- \int_0^s \phi( O_{\fl{u} } ) \, du}  \, \| \bar{Y}_s \|^2_H\, ds.
\end{split}
\end{equation}
The assumption that $ \forall \, v, w \in H_1 \colon \left< v, F( v + w ) \right>_H \leq \frac{1}{2}\phi(w)  \| v \|^2_H+ \ctwo \| v \|^2_{ H_{ 1 / 2 } }+ \frac{1}{2}\Phi( w )$ hence proves that for all $t \in [0, T]$ it holds that
\begin{equation}
\begin{split}
 & e^{-\int_0^t  \phi( O_{\fl{s} } ) \, ds } \, \| \bar{Y}_t \|^2_H  \leq \|\bar{Y}_0 \|^2_H \\
 & +  \int_0^te^{- \int_0^s \phi( O_{\fl{u} } ) \, du}  \left[ - (1+\ctwo) \, \| \bar{Y}_s \|^2_{H_{1/2}} + \phi( O_{\fl{s} } ) \, \| \bar{Y}_s \|^2_H +2 \ctwo \, \| \bar{Y}_s \|^2_{H_{1/2}} +
 \Phi(O_{ \lf s \rf_{h } }) \right] ds \\
 &+ \tfrac{  \cthree \,    (1+ \sqrt{\cthree})^{2 + \cfour} \,   h^{ 2\min \{(1 + \cfour /2)( 1-\alpha - \rho - (1+ \cfour/2)\chi) , \varrho - \rho - (1+\cfour/2) \chi ,1-\alpha - \rho -( 1 + \cfour ) \chi \}}   }{(1/2-\ctwo/2)\, ( 1-\alpha - \rho)^{2 + \cfour}} \int_0^te^{- \int_0^s \phi( O_{\fl{u} } ) \, du}  \, ds\\ 
&-    \int_0^t \phi( O_{\fl{s} } ) \, e^{- \int_0^s \phi( O_{\fl{u} } ) \, du}  \, \| \bar{Y}_s \|^2_H\, ds.
\end{split}
\end{equation}
Therefore, we obtain for all $ t \in [0, T]$ that
\begin{align}
\begin{split}
 & e^{-\int_0^t  \phi( O_{\fl{s} } ) \, ds } \, \| \bar{Y}_t \|^2_H  \leq \|\bar{Y}_0 \|^2_H +  \int_0^te^{- \int_0^s \phi( O_{\fl{u} } ) \, du}  \left[ \{2\varphi- (1+\varphi)\}\| \bar{Y}_s \|^2_{H_{1/2}} +
 \Phi(O_{ \lf s \rf_{h } }) \right] ds\\
 & + \tfrac{  \cthree \,    (1+ \sqrt{\cthree})^{2 + \cfour} \,   h^{ 2\min \{(1 + \cfour /2)( 1-\alpha - \rho - (1+ \cfour/2)\chi) , \varrho - \rho - (1+\cfour/2) \chi ,1-\alpha - \rho -( 1 + \cfour ) \chi \}}   }{(1/2-\ctwo/2)\, ( 1-\alpha - \rho)^{2 + \cfour}} \int_0^te^{- \int_0^s \phi( O_{\fl{u} } ) \, du}  \, ds.
 \end{split}
\end{align}
This assures for all $ t \in [0, T]$ that
\begin{equation}
\label{eq:estimate00}
\begin{split}
& \| \bar{Y}_t \|^2_H + (1-\ctwo) \int_0^t  e^{\int_s^t \phi( O_{\fl{u} } ) \, du } \, \| \bar{Y}_s \|^2_{H_{1/2}} \, ds \leq e^{\int_0^t \phi( O_{\fl{s} } ) \, ds } \, \| \bar{Y}_0 \|^2_H \\
& +    \int_0^te^{ \int_s^t \phi( O_{\fl{u} } ) \, du}  \left[   \Phi( O_{ \lf s \rf_{h } }) + \tfrac{  \cthree \,    (1+ \sqrt{\cthree})^{2 + \cfour} \,   h^{ 2\min \{(1 + \cfour /2)( 1-\alpha - \rho - (1+ \cfour/2)\chi) , \varrho - \rho - (1+\cfour/2) \chi ,1-\alpha - \rho -( 1 + \cfour ) \chi \}} }{(1/2-\ctwo/2)\, ( 1-\alpha - \rho)^{2 + \cfour}} \right] ds.
\end{split}
\end{equation}
The proof of Proposition~\ref{prop:priori_bound} is thus completed.
\end{proof}

%%%%%%%%%%%%%%%%%%%%%%%%%%%%%%%%%%%%%%%%%%%%%%%%%%%%%%%%%%%%%

\begin{cor}[A priori moment bounds]
\label{cor:bounds}
Let $ ( H, \left< \cdot , \cdot \right>_H, \left\| \cdot \right\|_H ) $ 
be a separable $\mathbb{R}$-Hilbert space, 
let $ ( \Omega, \mathcal{F}, \P ) $ be a probability space, 
let $ A \in L(H) $ be a diagonal linear operator with $\sup\!\big( \sigma_p(A) \big) < 0$ 
(see, e.g., Definition 3.4.5 in  \cite{Jentzen2015SPDElecturenotes}), 
let $ ( H_r, \left< \cdot , \cdot \right>_{ H_r }, \left\| \cdot \right\|_{ H_r } ) $, 
$ r \in \R $, be a family of interpolation spaces associated to $ - A $ 
(see, e.g., Definition~3.5.25 in \cite{Jentzen2015SPDElecturenotes}), 
let $O \colon [0,T] \times \Omega \to H $ be a stochastic process, 
and let 
$Y \in \mathbb{M}([0, T] \times \Omega, H) $,
$F \in \mathcal{C}(H, H)$, 
$\phi, \Phi \in \mathcal{M}( \mathcal{B}(H) , \mathcal{B}([0,\infty)))$, 
$ \ctwo \in [0,1)$, $\alpha \in [0,\nicefrac{1}{2}]$, $\rho \in [0,1-\alpha)$, 
$ \varrho \in [\rho, \rho+1]$,  $ \cthree , \cfour \in [0,\infty)$, 
$\chi \in [0, (2-2\alpha-2\rho)/(2+\cfour) ] $, $T \in (0, \infty)$, 
$ h \in (0, 1]$, $p \in [2, \infty)$
satisfy for all $v, w \in H_1 $, $t \in [0,T]$  that $\left< v, F( v + w ) \right>_H \leq  \frac{1}{2}\phi(w) \| v \|^2_H+ \ctwo \| v \|^2_{ H_{ 1 / 2 } }+ \frac{1}{2}\Phi( w )$, $ \left\| F(v) - F(w) \right\|_{H_{ -1/2 }}^2\leq \cthree \max\{ 1, \|v\|_{ H_{ \varrho } }^{\cfour} \} \|v-w\|_{ H_{ \rho } }^2 + \cthree \, \|v-w\|^{2 + \cfour}_{ H_{ \rho } }$, $\|F(v)\|_{H_{ - \alpha}}^2 \leq \cthree \max\{ 1, \|v\|_{ H_{ \varrho } }^{2 + \cfour} \} $, and
\begin{equation}\label{meas:Y}
Y_t =  \int_0^t e^{ ( t - s ) A } \, \mathbbm{1}_{ \{ \| Y_{ \lf s \rf_{h} } \|_{ H_{ \varrho } } + \| O_{ \lf s \rf_{h} } \|_{ H_{ \varrho } } \leq h^{ - \chi } \}} \, F \big(  Y_{ \lf s \rf_{ h } } \big) \, ds + O_t.
\end{equation}
Then it holds that $Y - O \colon [0,T] \times \Omega \to H$ is a stochastic process with continuous sample paths and it holds that
\begin{align}
\begin{split}
& \left\| \sup\nolimits_{t \in [0,T]} \|Y_t - O_t \|_H \right\|_{\mathcal{L}^p(\P ; \R)} \\
&\leq  \sqrt{ \int_0^T \left\|e^{ \int_s^T \phi( O_{\fl{u} } ) \, du}  \left[   \Phi( O_{ \lf s \rf_{h } }) + \tfrac{  \cthree \,    (1+ \sqrt{\cthree})^{2 + \cfour} \,   h^{ \min \{ 2\varrho, 2- 2\alpha  -\cfour  \chi \}  - 2\rho - (2+\cfour) \chi}  }{(1/2-\ctwo/2)\, ( 1-\alpha - \rho)^{2 + \cfour}}\right]  \right\|_{\mathcal{L}^{p/2}(\P ; \R)} ds } \\
& \leq
\sqrt{
	\left[
	1
	+
\tfrac{  \cthree \,    (1+ \sqrt{\cthree})^{2 + \cfour} \,   h^{ \min \{ 2\varrho, 2-2\alpha  -\cfour  \chi \}  - 2\rho - (2+\cfour) \chi}  }{(1/2-\ctwo/2)\, ( 1-\alpha - \rho)^{2 + \cfour}}
	\right]
	\int_0^T
	\left|
	\E\!\left[
	e^{ \nicefrac{ p }{ 2 } \int_s^T \phi( O_{\fl{u} } ) \, du }
	\max\!\big\{
	1 , | \Phi( O_{ \lf s \rf_{h } } ) |^{ \nicefrac{ p }{ 2 } }
	\big\} \right]
	\right|^{ 2 / p }
	ds
}.
\end{split}
\end{align}
\end{cor}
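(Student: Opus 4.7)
The plan is to combine the pathwise bound of Proposition~\ref{prop:priori_bound} with Minkowski's integral inequality and a monotonicity argument.

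First, I would verify the sample-path regularity of $Y-O$. The map $\bar{F}\colon H\times H\to H$ given by $\bar{F}(v,w)=\mathbbm{1}_{\{\|v\|_{H_{\varrho}}+\|w\|_{H_{\varrho}}\le h^{-\chi}\}}F(v)$ is Borel-measurable (the indicator is Borel since $\|\cdot\|_{H_{\varrho}}\colon H\to[0,\infty]$ is lower semicontinuous as a supremum of continuous seminorms on finite-dimensional spectral projections of $A$, and $F$ is continuous), so Lemma~\ref{lemma:meas} yields that $Y$ is a stochastic process and $Y-O$ has right-continuous sample paths. Setting $Z_s:=\mathbbm{1}_{\{\|Y_{\lf s\rf_h}\|_{H_{\varrho}}+\|O_{\lf s\rf_h}\|_{H_{\varrho}}\le h^{-\chi}\}}F(Y_{\lf s\rf_h})$, Lemma~\ref{lem:YX_in_DA} applied pathwise (this $Z$ is piecewise constant on the grid $h\Z\cap[0,T]$ and bounded) upgrades right-continuity to Lipschitz (hence continuous) sample paths of $\bar{Y}:=Y-O$.

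Second, since \eqref{meas:Y} forces $Y_0=O_0$, I would apply Proposition~\ref{prop:priori_bound} pathwise with vanishing initial condition to obtain, for every $t\in[0,T]$,
\begin{equation*}
\|Y_t-O_t\|_H^2 \leq \int_0^t e^{\int_s^t \phi(O_{\lf u\rf_h})\,du}\bigl[\Phi(O_{\lf s\rf_h})+K\,h^{\gamma_0}\bigr]\,ds,
\end{equation*}
where $K=\tfrac{c_3(1+\sqrt{c_3})^{2+\theta}}{(1/2-\varphi/2)(1-\alpha-\rho)^{2+\theta}}$ and $\gamma_0$ is the exponent produced by Proposition~\ref{prop:priori_bound}. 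Since $\phi\ge 0$, the right-hand side is nondecreasing in $t$, so it is dominated by its value at $t=T$, which is exactly the majorant appearing in the statement provided $\gamma_0$ equals the stated exponent. A short algebraic simplification, using the hypothesis $\chi\le (2-2\alpha-2\rho)/(2+\theta)$, shows that the first argument of the threefold $\min$ defining $\gamma_0$ exceeds the third (via the factor $1+\theta/2\ge 1$), and one verifies the identity $2\min\{\varrho-\rho-(1+\theta/2)\chi,\,1-\alpha-\rho-(1+\theta)\chi\}=\min\{2\varrho,\,2-2\alpha-\theta\chi\}-2\rho-(2+\theta)\chi$.

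Third, to pass to the first $L^p$-bound, take the $\mathcal{L}^{p/2}(\P;\R)$-norm of both sides (meaningful because $p\ge 2$), interchange norm and integral in $s$ by Minkowski's integral inequality, and use the identity $\|\sup_{t\in[0,T]}\|Y_t-O_t\|_H\|_{\mathcal{L}^p(\P;\R)}=\bigl(\|\sup_{t\in[0,T]}\|Y_t-O_t\|_H^2\|_{\mathcal{L}^{p/2}(\P;\R)}\bigr)^{1/2}$, where the supremum is measurable by continuity of sample paths. For the second displayed bound, I would invoke $\Phi(O_{\lf s\rf_h})+Kh^{\gamma}\le (1+Kh^{\gamma})\max\{1,\Phi(O_{\lf s\rf_h})\}$ (valid since $\Phi\ge 0$), pull the constant $(1+Kh^{\gamma})$ outside the integral in $s$, and use $\max\{1,\Phi\}^{p/2}=\max\{1,\Phi^{p/2}\}$ to arrive at the expectation of $e^{(p/2)\int_s^T\phi(O_{\lf u\rf_h})\,du}\max\{1,|\Phi(O_{\lf s\rf_h})|^{p/2}\}$.

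The main obstacle is purely algebraic: verifying that $\gamma_0$ simplifies to the exponent in the statement, which requires carefully identifying which term in the $\min$ is binding under the upper bound on $\chi$. The probabilistic content reduces to a single application of Minkowski's integral inequality combined with the pathwise monotonicity in $t$ of the deterministic majorant inherited from Proposition~\ref{prop:priori_bound}.
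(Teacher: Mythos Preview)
Your proposal is correct and follows essentially the same route as the paper: invoke Lemma~\ref{lemma:meas} and Lemma~\ref{lem:YX_in_DA} for the sample-path regularity, apply Proposition~\ref{prop:priori_bound} pathwise with $Y_0=O_0$, use the nonnegativity of $\phi$ to replace the upper limit $t$ by $T$, and then Minkowski's integral inequality for the $\mathcal{L}^p$-bound. Your explicit verification that the first branch of the threefold $\min$ is dominated by the third under the constraint on $\chi$, together with the rewriting $2\min\{\varrho-\rho-(1+\cfour/2)\chi,\,1-\alpha-\rho-(1+\cfour)\chi\}=\min\{2\varrho,\,2-2\alpha-\cfour\chi\}-2\rho-(2+\cfour)\chi$, and your derivation of the second displayed inequality via $\Phi+Kh^{\gamma}\le (1+Kh^{\gamma})\max\{1,\Phi\}$, actually make these steps more transparent than in the paper's own proof, which handles them tersely.
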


\begin{proof}[Proof of Corollary \ref{cor:bounds}.]
Note that the assumption that $ O \colon [0,T] \times \Omega \to H $ 
is a stochastic process and Lemma~\ref{lemma:meas} yield that 
$ Y \colon [0,T] \times \Omega \to H $ is also a stochastic process.
Combining Proposition~\ref{prop:priori_bound} with the 
assumption that $\chi \in [0, (2 -2\alpha-2\rho)/(2+\cfour) ] $ hence yields that 
\begin{equation}\label{cor:mY}
\begin{split}
&\sup_{t \in [0, T]} \| Y_t - O_t \|^2_H \\ 
& \leq  \sup_{t \in [0, T]}  
\bigg(
  \smallint_0^te^{ \int_s^t \phi(O_{\fl{u}}) \, du}  \left[   \Phi( O_{ \lf s \rf_{h } }) + \tfrac{  \cthree \,    (1+ \sqrt{\cthree})^{2 + \cfour} \,   h^{ 2\min \{(1 + \cfour /2)( 1-\alpha - \rho - (1+ \cfour/2)\chi) , \varrho - \rho - (1+\cfour/2) \chi ,1-\alpha - \rho -( 1 + \cfour ) \chi \}} }{(1/2-\ctwo/2)\, ( 1-\alpha - \rho)^{2 + \cfour}} \right] ds 
\bigg)
\\
& =  \sup_{t \in [0, T]} 
\bigg( 
  \int_0^te^{ \int_s^t \phi(\mathcal{O}_{\fl{u}}) \, du}  \left[   \Phi( O_{ \lf s \rf_{h } }) + \tfrac{  \cthree \,    (1+ \sqrt{\cthree})^{2 + \cfour} \,   h^{ \min \{ 2\varrho, 2-2\alpha  -\cfour  \chi \}  - 2\rho - (2+\cfour) \chi} }{(1/2-\ctwo/2)\, ( 1-\alpha - \rho)^{2 + \cfour}} \right] ds 
\bigg)
\\
& = \int_0^T e^{ \int_s^T \phi(O_{\fl{u}}) \, du}  \left[   \Phi( O_{ \lf s \rf_{h } }) + \tfrac{  \cthree \,    (1+ \sqrt{\cthree})^{2 + \cfour} \,   h^{ \min \{ 2\varrho, 2-2\alpha  -\cfour  \chi \}  - 2\rho - (2+\cfour) \chi} }{(1/2-\ctwo/2)\, ( 1-\alpha - \rho)^{2 + \cfour}} \right] ds.
\end{split}
\end{equation}
Moreover, 
the fact that 
$ Y \colon [0,T] \times \Omega \to H $ is a stochastic process, 
the assumption that 
$ O \colon [0,T] \times \Omega \to H $ is a stochastic process, 
\eqref{meas:Y},  
and Lemma~\ref{lem:YX_in_DA} 
prove that $Y - O \colon [0,T] \times \Omega \to H$ 
is a stochastic process with continuous sample paths. 
Hence, we obtain that
$ 
  \big(
    \Omega \ni \omega \mapsto
    \sup\nolimits_{ t \in [0,T] } 
    \| Y_t( \omega ) - O_t( \omega ) \|_H 
    \in
    \R
  \big)
  \in \mathcal{M}(\mathcal{F}, \mathcal{B}(\R))$. This, \eqref{cor:mY}, Minkowski's integral inequality, and the assumption that $p \geq 2$  show that 
\begin{align}\label{2cor:mY}
\begin{split}
& \left\| \sup\nolimits_{t \in [0,T]} \|Y_t - O_t \|_H \right\|_{\mathcal{L}^p(\P ; \R)} \\
&\leq  \sqrt{ \int_0^T \left\|e^{ \int_s^T \phi(O_{\fl{u}}) \, du}  \left[   \Phi( O_{ \lf s \rf_{h } }) + \tfrac{  \cthree \,    (1+ \sqrt{\cthree})^{2 + \cfour} \,   h^{ \min \{ 2\varrho, 2- 2\alpha  -\cfour  \chi \}  - 2\rho - (2+\cfour) \chi}  }{(1/2-\ctwo/2)\, ( 1-\alpha - \rho)^{2 + \cfour}}\right]  \right\|_{\mathcal{L}^{p/2}(\P ; \R)} ds }.
\end{split}
\end{align}
The proof of Corollary~\ref{cor:bounds}
is thus completed.
\end{proof}

\section{Pathwise convergence}
\label{sec:pathwise_error}

\subsection{Setting}\label{setting:pathwise}

Let  $( V, \left\| \cdot \right\|_V)$ and 
$(W, \left\| \cdot \right\|_W)$ be $\R$-Banach spaces 
and let $T, \chi \in (0, \infty)$, $\Upsilon \in \R$, 
$\alpha \in [0, 1)$,  $  (P_n)_{n \in \mathbb{N}} \in \mathbb{M}(\N, L(V))$, 
$(h_n)_{n \in \N} \in \mathbb{M}(\N, (0, \infty) )$, $F \in \mathcal{C} (V, W)$, 
$ \Psi \in \mathbb{M}( [0, \infty], [0, \infty])$, 
$X, O \in \mathcal{C} ([0, T] , V )$, $ (\Y^{n})_{n \in \N}$, 
$(\mathcal{O}^n)_{n \in \N} \in \mathbb{M}( \N, \mathbb{M}( [0, T], V))$, 
$S \in \mathcal{M}\big( \mathcal{B}( (0, T] ) , \mathcal{B}(L(W, V))\big)$ satisfy for all $ r \in [0, \infty]$, $t \in [0,T]$, $n \in \mathbb{N} $ that $\limsup_{m \to \infty} h_m=0$, $\Upsilon  = \sup_{t \in (0, T]} ( t^{\alpha} \| S_t \|_{L(W,V)} )$, $\Psi(r) =  \sup \!\big( \{0 \} \cup  \big\{ \frac{\|F(v)-F(w)\|_W}{\|v-w\|_V}   \colon v, w \in V,  v\neq w, \max \{ \|v\|_V, \|w\|_V \} \leq r \big\} \big)$, $\Psi( [0,\infty) ) \subseteq \R$, $X_t=\int_0^t S_{t-s} \, F(X_s) \, ds + O_t$, and
\begin{align}
\begin{split}
 \Y_t^{n} = \int_0^t P_n \, S_{  t - s  } \, \mathbbm{1}_{[0, |h_n|^{-\chi}] } \big( \| \Y_{ \lf s \rf_{h_n} }^{n} \|_{V} + \| \mathcal{O}_{ \lf s \rf_{h_n} }^{n} \|_{V}\big) \, F \big(  \Y_{ \lf s \rf_{ h_n } }^{n} \big) \, ds +  \mathcal{O}_t^{n}.
 \end{split}
\end{align}

\subsection{Auxiliary results}

\begin{lemma}\label{lemma:pathwise_bound}
Assume the setting in Section \ref{setting:pathwise} and let $n \in \mathbb{N}$, $t \in [0, T]$. Then
\begin{align}
\begin{split}
& \|X_t - \Y_t^{n} \|_V \leq   \left\| O_t - \mathcal{O}_t^{n} \right\|_V + \frac{  \, \Upsilon \, t^{1-\alpha}}{(1-\alpha)}  \Psi\! \left( \sup\nolimits_{s \in [0, t]} \|X_s\|_V\right)  \left[\sup\nolimits_{s \in [0, t]} \| X_s - X_{\lf s \rf_{h_n}} \|_V \right] \\
& +  \frac{ \Upsilon \, |h_n|^{\chi} \, t^{1-\alpha}    }{ (1-\alpha)} \left( \sup\nolimits_{s \in [0, t]} \left\| F(X_s) \right\|_W \right)\left( \sup\nolimits_{s \in [0, t]} \left[ \left\| X_s \right\|_V + \left\|\mathcal{O}_s^{n} \right\|_V \right] \right) \\
& + \left( \sup\nolimits_{s \in [0, t]} \left\| F(X_s) \right\|_W \right) \int_0^t \left\|  ( \Id_V - P_n )  S_s   \right\|_{L(W, V)}  ds  \\
& +  \Upsilon \left[\left\|P_n \right\|_{L(V)}  \Psi\big( \sup\nolimits_{s \in [0, \max\{0, \lceil t \rceil_{h_n}-h_n \}]}\max \! \big\{\|X_s \|_V ,  \|\Y_s^n\|_V \big\} \big)  +   |h_n|^{\chi} \left( \sup\nolimits_{s \in [0, t]} \left\| F(X_s) \right\|_W \right) \right]\\
& \cdot \int_0^t (t-s)^{-\alpha} \,  \| X_{ \lf s \rf_{h_n} } -  \Y_{ \lf s \rf_{h_n} }^{n}  \|_V \, ds.
\end{split}
\end{align}
\end{lemma}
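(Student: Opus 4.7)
The plan is to expand $X_t - \Y^n_t$ using the two mild representations, subtract, and telescope the integrand to isolate each source of error. Setting $\mathbbm{1}_n(s) := \mathbbm{1}_{[0,|h_n|^{-\chi}]}\!\bigl(\|\Y^n_{\lf s\rf_{h_n}}\|_V + \|\mathcal{O}^n_{\lf s\rf_{h_n}}\|_V\bigr)$, I would write
\begin{equation*}
X_t - \Y^n_t \;=\; (O_t - \mathcal{O}^n_t) \;+\; T_A \;+\; T_B \;+\; T_C \;+\; T_D,
\end{equation*}
where $T_A := \int_0^t (1-\mathbbm{1}_n(s))\,S_{t-s} F(X_s)\,ds$ (truncation), $T_B := \int_0^t \mathbbm{1}_n(s)\,S_{t-s}[F(X_s)-F(X_{\lf s\rf_{h_n}})]\,ds$ (temporal regularity of $X$), $T_C := \int_0^t \mathbbm{1}_n(s)\,(\Id_V-P_n)\,S_{t-s} F(X_{\lf s\rf_{h_n}})\,ds$ (spectral projection), and $T_D := \int_0^t \mathbbm{1}_n(s)\,P_n S_{t-s}[F(X_{\lf s\rf_{h_n}})-F(\Y^n_{\lf s\rf_{h_n}})]\,ds$ (error propagation).

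Three of these four pieces are essentially bookkeeping. For $T_B$, applying the definition of $\Psi$ at $r=\sup_{u\in[0,t]}\|X_u\|_V$, the bound $\|S_{t-s}\|_{L(W,V)}\leq \Upsilon(t-s)^{-\alpha}$, and $\int_0^t (t-s)^{-\alpha}\,ds = t^{1-\alpha}/(1-\alpha)$ yields the second summand of the claimed estimate. For $T_C$, pulling out $\sup_{u\in[0,t]}\|F(X_u)\|_W$ and substituting $u=t-s$ immediately produces the fourth summand. For $T_D$, invoking $\Psi$ at $r=\sup_{u\in[0,\max\{0,\lceil t\rceil_{h_n}-h_n\}]}\max\{\|X_u\|_V,\|\Y^n_u\|_V\}$---which dominates $\max\{\|X_{\lf s\rf_{h_n}}\|_V,\|\Y^n_{\lf s\rf_{h_n}}\|_V\}$ for almost every $s\in[0,t]$, since $\lf s\rf_{h_n}$ is a multiple of $h_n$ not exceeding $\max\{0,\lceil t\rceil_{h_n}-h_n\}$---and keeping $\|P_n\|_{L(V)}$ together with the singular kernel $(t-s)^{-\alpha}$ inside the integral delivers the $\|P_n\|\,\Psi(\cdot)$-part of the fifth summand.

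The crux is $T_A$. Here I would invoke the universally valid cut-off inequality
\begin{equation*}
1-\mathbbm{1}_n(s) \;\leq\; |h_n|^{\chi}\bigl(\|\Y^n_{\lf s\rf_{h_n}}\|_V + \|\mathcal{O}^n_{\lf s\rf_{h_n}}\|_V\bigr),
\end{equation*}
which holds because on $\{\mathbbm{1}_n(s)=0\}$ the parenthetical term exceeds $|h_n|^{-\chi}$; this manufactures the required $|h_n|^{\chi}$ factor. To convert the resulting $\|\Y^n_{\lf s\rf_{h_n}}\|_V$ into the $\|X_s\|_V$-dependence demanded by the statement, I split $\|\Y^n_{\lf s\rf_{h_n}}\|_V \leq \|X_{\lf s\rf_{h_n}}\|_V + \|X_{\lf s\rf_{h_n}}-\Y^n_{\lf s\rf_{h_n}}\|_V$ via the triangle inequality. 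The $\|X_{\lf s\rf_{h_n}}\|_V + \|\mathcal{O}^n_{\lf s\rf_{h_n}}\|_V$ contribution, after pulling suprema out and integrating $(t-s)^{-\alpha}$, yields the third summand; the leftover $\|X_{\lf s\rf_{h_n}}-\Y^n_{\lf s\rf_{h_n}}\|_V$ contribution---precisely the quantity to be iterated by the forthcoming Gronwall-type argument---completes the fifth summand. The main delicate point throughout is to absorb the indicator and the $\lf\cdot\rf_{h_n}$-shift consistently on both sides of each estimate so that the $|h_n|^{\chi}$-factor and the $\sup_{u\in[0,t]}\|F(X_u)\|_W$ appear exactly where the statement prescribes; no analytic ingredient beyond the triangle inequality, the bound $\|S_{t-s}\|_{L(W,V)}\leq \Upsilon(t-s)^{-\alpha}$, and the modulus $\Psi$ is required.
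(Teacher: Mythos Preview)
Your proposal is correct and follows essentially the same approach as the paper. The only difference is the order of the telescoping: the paper inserts the time-discretisation first and the truncation second (i.e.\ $F(X_s)\to F(X_{\lf s\rf_{h_n}})\to \mathbbm{1}_n F(X_{\lf s\rf_{h_n}})\to\ldots$), whereas you insert the truncation first and the time-discretisation second (i.e.\ $F(X_s)\to \mathbbm{1}_n F(X_s)\to \mathbbm{1}_n F(X_{\lf s\rf_{h_n}})\to\ldots$); since $\mathbbm{1}_n\leq 1$ and both $\|F(X_s)\|_W$ and $\|F(X_{\lf s\rf_{h_n}})\|_W$ are bounded by $\sup_{u\in[0,t]}\|F(X_u)\|_W$, this reordering is immaterial and all four estimates go through exactly as in the paper.
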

\begin{proof}[Proof of Lemma \ref{lemma:pathwise_bound}.] 
Observe that the triangle inequality proves that
\begin{align}\label{eq:triangle}
\begin{split}
&\|X_t - \Y_t^{n} \|_{V}  \leq \left\| O_t - \mathcal{O}_t^{n} \right\|_V + \int_0^t \big \| S_{t-s} \big[ F(X_s)- F\big(X_{ \lf s \rf_{h_n} }\big)\big] \big\|_V \, ds \\
& +\int_0^t \big\| S_{t-s} \big[  F\big(X_{ \lf s \rf_{h_n} }\big) - \mathbbm{1}_{[0, |h_n|^{-\chi}] } \big( \| \Y_{ \lf s \rf_{h_n} }^{n} \|_{V} + \| \mathcal{O}_{ \lf s \rf_{h_n} }^{n} \|_{V} \big) F\big(X_{ \lf s \rf_{h_n} }\big)\big]  \big\|_V  \,ds \\
& + \int_0^t \big\|\! \left[ S_{t-s} - P_n \, S_{t-s} \right]   \mathbbm{1}_{[0, |h_n|^{-\chi}] } \big( \| \Y_{ \lf s \rf_{h_n} }^{n} \|_{V} + \| \mathcal{O}_{ \lf s \rf_{h_n} }^{n} \|_{V} \big)  F\big(X_{ \lf s \rf_{h_n} }\big)  \big\|_V \, ds \\
& + \int_0^t \big\| P_n \, S_{t-s} \,\mathbbm{1}_{[0, |h_n|^{-\chi}] } \big( \| \Y_{ \lf s \rf_{h_n} }^{n} \|_{V} + \| \mathcal{O}_{ \lf s \rf_{h_n} }^{n} \|_{V} \big) \big[ F\big(X_{ \lf s \rf_{h_n} }\big) -  F\big(\Y_{ \lf s \rf_{h_n}}^{n}\big) \big]  \big\|_V \, ds .
\end{split}
\end{align}
Next note that
\begin{align}\label{eq:path_1}
\begin{split}
\int_0^t \left \| S_{t-s} \big[ F(X_s)- F\big(X_{ \lf s \rf_{h_n} }\big)\big] \right\|_V ds  & \leq \int_0^t \left \|  S_{t-s} \right\|_{L(W,V)}  \big\|  F(X_s) -  F\big(X_{ \lf s \rf_{h_n} }\big)  \big\|_{W} \, ds \\
& \leq \Upsilon \, \Psi \! \left(\sup\nolimits_{s \in [0, t]} \|X_s\|_V\right)  \int_0^t (t-s)^{-\alpha} \, \| X_s -X_{\lf s \rf_{h_n}} \|_V \, ds \\
& \leq \frac{ \Upsilon  \, t^{1-\alpha}}{(1-\alpha)}  \Psi \! \left(\sup\nolimits_{s \in [0, t]} \|X_s\|_V\right)  \left[\sup\nolimits_{s \in [0, t]} \| X_s - X_{\lf s \rf_{h_n}} \|_V \right].
\end{split}
\end{align}
Moreover, observe that
\begin{align}\label{eq:path_2}
\begin{split}
& \int_0^t \big\| S_{t-s} \big[  F(X_{ \lf s \rf_{h_n} }) - \mathbbm{1}_{[0, |h_n|^{-\chi}] } \big( \| \Y_{ \lf s \rf_{h_n} }^{n} \|_{V} + \| \mathcal{O}_{ \lf s \rf_{h_n} }^{n} \|_{V}\big) \, F\big(X_{ \lf s \rf_{h_n} }\big)\big]  \big\|_V \, ds \\
& = \int_0^t \big\| S_{t-s} \,\mathbbm{1}_{( |h_n|^{-\chi}, \infty) } \big( \| \Y_{ \lf s \rf_{h_n} }^{n} \|_{V} + \| \mathcal{O}_{ \lf s \rf_{h_n} }^{n} \|_{V} \big) \, F\big(X_{ \lf s \rf_{h_n} }\big)  \big\|_V \, ds \\
& \leq \int_0^t  \left\|S_{t-s} \right\|_{L(W, V)} \mathbbm{1}_{( |h_n|^{-\chi}, \infty) } \big( \| \Y_{ \lf s \rf_{h_n} }^{n} \|_{V} + \| \mathcal{O}_{ \lf s \rf_{h_n} }^{n} \|_{V} \big) \, \big\| F\big(X_{\lf s \rf_{h_n}}\big) \big\|_W \, ds \\
& \leq \Upsilon \left( \sup\nolimits_{s \in [0, t]} \left\| F(X_s) \right\|_W \right) \int_0^t (t-s)^{-\alpha} \, \big(  \| \Y_{ \lf s \rf_{h_n} }^{n} \|_V + \| \mathcal{O}_{ \lf s \rf_{h_n} }^{n} \|_{V} \big) \, | h_n|^{ \chi }\, ds \\
& \leq  | h_n|^{ \chi } \, \Upsilon  \left( \sup\nolimits_{s \in [0, t]} \left\| F(X_s) \right\|_W \right) \\
& \quad \cdot \!\int_0^t (t-s)^{-\alpha} \,  \big( \| X_{ \lf s \rf_{h_n} } -  \Y_{ \lf s \rf_{h_n} }^{n}  \|_V  + \| X_{ \lf s \rf_{h_n} }\|_V  + \| \mathcal{O}_{ \lf s \rf_{h_n} }^n|_V \big) \, ds  \\
& \leq  \Upsilon \, |h_n|^{\chi} \left( \sup\nolimits_{s \in [0, t]} \left\| F(X_s) \right\|_W \right) \int_0^t (t-s)^{-\alpha} \, \| X_{ \lf s \rf_{h_n} } -  \Y_{ \lf s \rf_{h_n} }^{n}  \|_V \, ds \\
& \quad +  \frac{ \Upsilon \, |h_n|^{\chi} \, t^{1-\alpha}    }{ (1-\alpha)} \left( \sup\nolimits_{s \in [0, t]} \left\| F(X_s) \right\|_W \right)\left( \sup\nolimits_{s \in [0, t]} \left[ \left\| X_s \right\|_V +  \left\| \mathcal{O}_s^n \right\|_V \right]\right)
\end{split}
\end{align}
and 
\begin{align}\label{eq:path_3}
\begin{split}
& \int_0^t \big\| \!\left[ S_{t-s} - P_n \, S_{t-s} \right]   \mathbbm{1}_{[0, |h_n|^{-\chi}] } \big( \| \Y_{ \lf s \rf_{h_n} }^{n} \|_{V} + \| \mathcal{O}_{ \lf s \rf_{h_n} }^{n} \|_{V} \big) \, F\big(X_{ \lf s \rf_{h_n} }\big)  \big\|_V \, ds  \\
& \leq \int_0^t \left\|  ( \Id_V - P_n ) S_{t-s}  \right\|_{L(W, V)} \|  F(X_{ \lf s \rf_{h_n} }) \|_W \, ds \\
& \leq  \left( \sup\nolimits_{s \in [0, t]} \left\| F(X_s) \right\|_W \right) \int_0^t \left\|  ( \Id_V - P_n ) S_{t-s}   \right\|_{L(W, V)}   ds\\
& = \left( \sup\nolimits_{s \in [0, t]} \left\| F(X_s) \right\|_W \right) \int_0^t \left\|  ( \Id_V - P_n ) S_s   \right\|_{L(W, V)}   ds.
\end{split}
\end{align}
Furthermore, note that
\begin{align}\label{eq:path_4}
\begin{split}
& \int_0^t \big\| P_n \, S_{t-s} \,\mathbbm{1}_{[0, |h_n|^{-\chi}] } \big( \| \Y_{ \lf s \rf_{h_n} }^{n} \|_{V} + \| \mathcal{O}_{ \lf s \rf_{h_n} }^{n} \|_{V}\big) \big[ F\big(X_{ \lf s \rf_{h_n} }\big) -  F\big(\Y_{ \lf s \rf_{h_n}}^{n}\big) \big]  \big\|_V \, ds \\
& \leq \int_0^t  \left\|P_n \right\|_{L(V)} \left\| S_{t-s} \, \right\|_{L(W,V)} \big\| F\big(X_{ \lf s \rf_{h_n} }\big) -   F\big(\Y_{ \lf s \rf_{h_n}}^{n }\big)  \big\|_W \, ds \\
& \leq \Upsilon \left\|P_n \right\|_{L(V)} \int_0^t (t-s)^{-\alpha} \, \big\|X_{ \lf s \rf_{h_n} }- \Y_{ \lf s \rf_{h_n} }^{n }  \big\|_V  \Psi\big( \!\max \! \big\{\|X_{ \lf s \rf_{h_n} }\|_V , \|\Y_{ \lf s \rf_{h_n} }^n\|_V \big\} \big) \,  ds \\
& \leq  \Upsilon \left\|P_n \right\|_{L(V)} \Psi\big(  \sup\nolimits_{s \in [0, \max\{0, \lceil t \rceil_{h_n}-h_n \}] }\max \! \big\{\|X_s \|_V , \|\Y_s^n \|_V \big\} \big)  \int_0^t (t-s)^{-\alpha} \, \|X_{ \lf s \rf_{h_n} }- \Y_{ \lf s \rf_{h_n} }^{n }  \|_V \, ds.
\end{split}
\end{align}
Combining \eqref{eq:triangle}--\eqref{eq:path_4}  completes the proof of Lemma \ref{lemma:pathwise_bound}.
\end{proof}

%%%%%%%%%%%%%%%%%%%%%%%%%%%%%%%%%%%%%%%%%%%%%%%%%%%%%%%%%%%%%%%%%%%%%%%%%%%

\begin{cor}\label{cor:pathwise}
Assume the setting in Section \ref{setting:pathwise}, let $n \in \N$, and assume that $\sup\nolimits_{t \in [0,T]}  \|\mathcal{O}_t^{n} \|_V < \infty$. Then  it holds for all  $t \in [0,T]$ that $\sup\nolimits_{s \in [0, \max\{0, \lceil t \rceil_{h_n}-h_n \}]}\max \!\big\{\|X_{s}\|_V , \|\Y_s^n\|_V \big\}< \infty$ and
\begin{align}\label{eq:cor:pathwise}
\begin{split}
& \sup\nolimits_{s \in [0,t]} \|X_s - \Y_s^{n} \|_V  \\
& \leq \bigg[ \sup\nolimits_{s \in [0,T]}  \| O_s - \mathcal{O}_s^{n} \|_V + \frac{  \, \Upsilon \, T^{1-\alpha}}{(1-\alpha)}  \Psi \big( \sup\nolimits_{s \in [0, T]} \|X_s\|_V\big)  \left[\sup\nolimits_{s \in [0, T]}\| X_s - X_{\lf s \rf_{h_n}} \|_V \right] \\
& +  \frac{ \Upsilon \, |h_n|^{\chi} \, T^{1-\alpha}    }{ (1-\alpha)} \left( \sup\nolimits_{s \in [0, T]} \| F(X_s) \|_W \right)\left( \sup\nolimits_{s \in [0, T]} \left[ \left\| X_s \right\|_V + \left\|\mathcal{O}_s^{n} \right\|_V \right] \right) \\
& + \left( \sup\nolimits_{s \in [0, T]} \left\| F(X_s) \right\|_W \right) \smallint_0^T \left\|  ( \Id_V - P_n ) S_s   \right\|_{L(W, V)}  ds  \bigg] \cdot \mathrm{E}_{1-\alpha} \Big[ t \,
\big| \Gamma (1-\alpha) \, \Upsilon \big|^{\nicefrac{1}{(1-\alpha)}} \big[ \left\|P_n \right\|_{L(V)}  \\
& \cdot   \Psi\big( \sup\nolimits_{s \in [0, \max\{0, \lceil t \rceil_{h_n}-h_n \}]}\max \!\big\{\|X_{s}\|_V , \|\Y_s^n\|_V \big\} \big)  +   |h_n|^{\chi} \left( \sup\nolimits_{s \in [0, T]} \left\| F(X_s) \right\|_W \right)  \big]^{\nicefrac{1}{(1-\alpha)}} \Big]< \infty.
\end{split}
\end{align}
\end{cor}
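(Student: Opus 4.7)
The plan is to derive a weakly singular linear Gronwall inequality for the running supremum $\phi(t) := \sup_{s \in [0,t]} \|X_s - \Y_s^{n}\|_V$ from Lemma~\ref{lemma:pathwise_bound}, and then to apply the generalized (Henry-type) Gronwall inequality (cf.\ Chapter~7 in Henry~\cite{h81}) in order to produce the Mittag--Leffler factor $\mathrm{E}_{1-\alpha}$ appearing in the statement.

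First, I would establish the a priori finiteness of $\sup_{s \in [0,T]} \|\Y_s^{n}\|_V$, which in particular yields the first claim of the corollary. The contribution of $X$ is immediate from $X \in \mathcal{C}([0,T],V)$. For $\Y^{n}$, I would use that whenever the indicator $\mathbbm{1}_{[0,|h_n|^{-\chi}]}$ in the defining integral equation is nonzero, its argument lies in the bounded ball $\{v \in V : \|v\|_V \leq |h_n|^{-\chi}\}$; by the hypothesis $\Psi([0,\infty)) \subseteq \R$, the function $F$ is Lipschitz (hence bounded) on every bounded subset of $V$, and therefore the truncated nonlinearity $\mathbbm{1}_{[0,|h_n|^{-\chi}]}(\cdots)\, F(\Y_{\lf s \rf_{h_n}}^{n})$ is uniformly bounded in $W$. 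Combining this with $\|S_u\|_{L(W,V)} \leq \Upsilon\, u^{-\alpha}$ (integrable since $\alpha < 1$) and the assumption $\sup_{s \in [0,T]} \|\mathcal{O}_s^{n}\|_V < \infty$ yields $\sup_{s \in [0,T]} \|\Y_s^{n}\|_V < \infty$.

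Second, for any $\tau \in [0,t]$ I would invoke Lemma~\ref{lemma:pathwise_bound}, enlarge every supremum appearing on its right-hand side to a supremum over $[0,T]$, and identify the resulting first four summands with the bracketed constant $A_n$ that multiplies $\mathrm{E}_{1-\alpha}$ in \eqref{eq:cor:pathwise}. Since $\lf s \rf_{h_n} \leq s$ and $\phi$ is nondecreasing, the piecewise-constant integrand is bounded by $\|X_{\lf s \rf_{h_n}} - \Y_{\lf s \rf_{h_n}}^{n}\|_V \leq \phi(s)$. Taking the supremum over $\tau \in [0,t]$ on the left and observing, via the substitution $u = \tau - s$ combined with the monotonicity of $\phi$, that the map $\tau \mapsto \int_0^\tau (\tau-s)^{-\alpha} \phi(s)\, ds$ is itself nondecreasing, one arrives at the weakly singular Gronwall inequality
\begin{align*}
\phi(t) \leq A_n + C_n \int_0^t (t-s)^{-\alpha}\, \phi(s)\, ds,
\end{align*}
where $C_n := \Upsilon\bigl[\|P_n\|_{L(V)}\, \Psi\bigl(\sup_{s \in [0,\max\{0,\lceil t \rceil_{h_n}-h_n\}]} \max\{\|X_s\|_V,\|\Y_s^{n}\|_V\}\bigr) + |h_n|^\chi \sup_{s \in [0,T]} \|F(X_s)\|_W\bigr]$.

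Finally, since $\phi$ is bounded on $[0,T]$ by Step~1, I would iterate this inequality in the standard Henry--Gronwall manner to obtain
\begin{align*}
\phi(t) \leq A_n \sum_{k=0}^\infty \frac{(C_n\, \Gamma(1-\alpha))^k\, t^{k(1-\alpha)}}{\Gamma(k(1-\alpha)+1)} = A_n \cdot \mathrm{E}_{1-\alpha}\!\bigl(t \cdot (C_n\, \Gamma(1-\alpha))^{1/(1-\alpha)}\bigr),
\end{align*}
which is exactly the bound claimed in \eqref{eq:cor:pathwise}. The principal technical point is the reduction from the piecewise-constant integrand of Lemma~\ref{lemma:pathwise_bound} to a clean weakly singular Gronwall inequality for the running supremum $\phi$; once this reduction is secured, the Mittag--Leffler bound follows by the standard iteration argument, and the a priori finiteness established in Step~1 guarantees that the iteration is rigorous.
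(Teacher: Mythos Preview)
Your proposal is correct and follows essentially the same route as the paper: derive from Lemma~\ref{lemma:pathwise_bound} a weakly singular inequality for the running supremum, verify that all quantities are finite (your truncation argument for $\sup_s\|\Y^n_s\|_V$ is a valid alternative to the paper's ``finite grid'' argument), and then apply the Henry--Gronwall lemma to obtain the $\mathrm{E}_{1-\alpha}$ factor. The only point you leave implicit is that when passing from $\tau$ to $t$ the $\Psi$-coefficient must be enlarged to $\Psi\big(\sup_{s\in[0,\max\{0,\lceil t\rceil_{h_n}-h_n\}]}\max\{\|X_s\|_V,\|\Y^n_s\|_V\}\big)$ rather than to a supremum over $[0,T]$, but this follows at once from the monotonicity of $\Psi$ and of $\tau\mapsto\lceil\tau\rceil_{h_n}$.
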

\begin{proof}[Proof of Corollary \ref{cor:pathwise}.]
Note that Lemma \ref{lemma:pathwise_bound} implies for all $t \in [0,T]$  that
\begin{align}\label{eq:cor:pathwise1}
\begin{split}
& \sup\nolimits_{s \in [0,t]} \|X_s - \Y_s^{n} \|_V\\
&  \leq \sup\nolimits_{s \in [0,T]}  \| O_s - \mathcal{O}_s^{n} \|_V + \frac{  \Upsilon \, T^{1-\alpha}}{(1-\alpha)}  \Psi \big( \sup\nolimits_{s \in [0, T]} \|X_s\|_V\big) \! \left[\sup\nolimits_{s \in [0, T]} \| X_s - X_{\lf s \rf_{h_n}} \|_V \right] \\
& +  \frac{ \Upsilon \, |h_n|^{\chi} \, T^{1-\alpha}    }{ (1-\alpha)} \left( \sup\nolimits_{s \in [0, T]} \left\| F(X_s) \right\|_W \right)\left( \sup\nolimits_{s \in [0, T]} \left[ \left\| X_s \right\|_V + \left\|\mathcal{O}_s^{n} \right\|_V \right] \right) \\
&  + \left( \sup\nolimits_{s \in [0, T]} \left\| F(X_s) \right\|_W \right)  \smallint\nolimits_0^T \left\|  ( \Id_V - P_n ) S_s   \right\|_{L(W, V)}  ds   \\
& + \Upsilon  \left[  \left\|P_n \right\|_{L(V)}  \Psi\big( \sup\nolimits_{s \in [0, \max\{0, \lceil t \rceil_{h_n}-h_n \}]}\max \! \big\{\|X_{s}\|_V , \|\Y_s^n\|_V \big\} \big)  +  |h_n|^{\chi} \left( \sup\nolimits_{s \in [0, T]} \left\| F(X_s) \right\|_W \right) \right]\\
&  \cdot \smallint_0^t (t-s)^{-\alpha} \big(\sup\nolimits_{u \in [0, s]}  \| X_{u} -  \Y_u^{n} \|_V \big) \, ds.
\end{split}
\end{align}
Moreover, note that the assumption that $\sup\nolimits_{t \in [0,T]}  \| \mathcal{O}_t^{n} \|_V < \infty$ and the assumption that $O \in \mathcal{C}([0,T], V)$ imply that
\begin{align}
\begin{split}
\sup\nolimits_{t \in [0,T]}  \|O_t- \mathcal{O}_t^{n} \|_V \leq \sup\nolimits_{t \in [0,T]}  \| O_t  \|_V +\sup\nolimits_{t \in [0,T]}  \| \mathcal{O}_t^{n}\|_V  < \infty.
\end{split}
\end{align}
This yields that
\begin{align}
\begin{split}
 &\sup\nolimits_{t \in [0,T]} \|\Y_t^{n}\|_V \\
 &\leq \| P_n \|_{L(V)}  \big(\max\nolimits_{s \in \{ 0, h_n, 2h_n, \ldots\}\cap [0,T]} \| F (  \Y_{s}^{n} ) \|_W \big) \! \left[\sup\nolimits_{t \in [0,T]} \smallint\nolimits_0^t  \| S_{  t - s  } \|_{L(W,V)} \, ds \right]+ \sup_{t \in [0,T] } \| \mathcal{O}_t^{n} \|_W \\
& \leq \Upsilon\, \| P_n \|_{L(V)}  \big(\max\nolimits_{s \in \{ 0, h_n, 2h_n, \ldots\}\cap [0,T]} \| F (  \Y_{s}^{n} ) \|_W \big) \left[\sup_{t \in [0,T]}  \smallint_0^t (t-s)^{-\alpha} \, ds \right]+ \sup_{t \in [0,T] } \| \mathcal{O}_t^{n} \|_W \\
& = \frac{T^{1-\alpha} \,  \Upsilon\, \| P_n \|_{L(V)}  \big(\max\nolimits_{s \in \{ 0, h_n,  2h_n, \ldots\}\cap [0,T]} \| F (  \Y_{s}^{n} ) \|_W \big)}{(1-\alpha)}  + \sup_{t \in [0,T] } \| \mathcal{O}_t^{n} \|_W < \infty.
\end{split}
\end{align}
The assumption that $X \in \mathcal{C}([0,T], V)$ hence yields that
\begin{align}
\begin{split}
\sup\nolimits_{t \in [0,T]}  \| X_t - \Y_t^{n} \|_V \leq \sup\nolimits_{t \in [0,T]}  \| X_t\|_V + \sup\nolimits_{t \in [0,T]} \|\Y_t^{n} \|_V < \infty.
\end{split}
\end{align}
Moreover, note that
\begin{align}\label{eq:cor:pathwise4}
\begin{split}
& \smallint_0^T \left\|  ( \Id_V - P_n ) S_{s}   \right\|_{L(W, V)}  ds \leq \big( \|\Id_V\|_{L(V)} + \|P_n\|_{L(V)} \big) \smallint_0^T \|S_{s} \|_{L(W, V)} \, ds \\
&\leq \Upsilon \, \big( 1 + \|P_n\|_{L(V)} \big) \smallint_0^T s^{-\alpha} \, ds = \frac{ \Upsilon \, ( 1 + \|P_n\|_{L(V)} ) \, T^{1-\alpha}}{(1- \alpha)} < \infty.
\end{split}
\end{align}
In the next step we combine \eqref{eq:cor:pathwise1}-\eqref{eq:cor:pathwise4} with the generalized Gronwall lemma in Chapter~7 in Henry~\cite{h81} (see, e.g., Corollary 1.4.6 in \cite{Jentzen2015SPDElecturenotes}) to obtain that for all $t \in [0,T]$ it holds that
\begin{align}
\begin{split}
& \sup\nolimits_{s \in [0,t]} \|X_s - \Y_s^{n} \|_V  \\
& \leq \bigg[ \sup\nolimits_{s \in [0,T]}  \| O_s - \mathcal{O}_s^{n} \|_V + \frac{  \, \Upsilon \, T^{1-\alpha}}{(1-\alpha)}  \Psi \big( \sup\nolimits_{s \in [0, T]} \|X_s\|_V\big)  \left[\sup\nolimits_{s \in [0, T]} \big\| X_s - X_{\lf s \rf_{h_n}} \big\|_V \right] \\
&  +  \frac{ \Upsilon \, |h_n|^{\chi} \, T^{1-\alpha}    }{ (1-\alpha)} \left( \sup\nolimits_{s \in [0, T]} \left\| F(X_s) \right\|_W \right)\left( \sup\nolimits_{s \in [0, T]} \left[ \left\| X_s \right\|_V + \left\|\mathcal{O}_s^{n} \right\|_V \right] \right) \\
& + \left( \sup\nolimits_{s \in [0, T]} \left\| F(X_s) \right\|_W \right) \smallint_0^T \left\|  ( \Id_V - P_n ) S_{s}   \right\|_{L(W, V)}  ds  \bigg] \cdot \mathrm{E}_{1-\alpha} \Big[ t \,
\big| \Gamma (1-\alpha) \, \Upsilon\big|^{\nicefrac{1}{(1-\alpha)}} \big[ \left\|P_n \right\|_{L(V)}  \\
& \cdot \Psi\big( \sup\nolimits_{s \in [0, \max\{0, \lceil t \rceil_{h_n}-h_n \}]}\max \!\big\{\|X_{s}\|_V , \|\Y_s^n\|_V \big\} \big)  +   |h_n|^{\chi} \left( \sup\nolimits_{s \in [0, T]} \left\| F(X_s) \right\|_W \right)  \big]^{\nicefrac{1}{(1-\alpha)}} \Big] < \infty.
\end{split}
\end{align}
The proof of Corollary \ref{cor:pathwise} is thus completed.
\end{proof}

%%%%%%%%%%%%%%%%%%%%%%%%%%%%%%%%%%%%%%%%%%%%%%%%%%%%%%%%%%%%%%%%%%%%%%%%%%%%%%%%%%%%%5

\subsection{Pathwise convergence}

\begin{prop}\label{prop:pathwise}
Assume the setting in Section \ref{setting:pathwise} and assume that $ \limsup_{n \to \infty} \|P_n\|_{L(V)} < \infty$ and  $\limsup_{n \to \infty} ( \int_0^T \|   ( \Id_V - P_n ) S_s  \|_{L(W, V)}  \, ds + \sup_{s \in [0, T]}   \|O_s - \mathcal{O}_s^n\|_V )=0$. Then 
\begin{enumerate}[(i)]
\item
it holds that $\limsup_{n \to \infty} \sup \nolimits_{s \in [0, T]} \| X_s- \Y_s^n \|_V=0$ and
\item
it holds that there exists a real number $ C \in  (0,\infty) $ such that for all $ n \in \N $ it holds that
\begin{equation}\label{prop:pahthwise:eq2}
\begin{split}
& \sup_{ s \in [0,T] } \| X_s - \mathcal{X}^n_s \|_V \\
&\leq C \left[ \sup_{s \in [0, T]}\|O_s - \mathcal{O}_s^n\|_V  + |h_n|^{\chi} + \sup_{s \in [0, T]} \| X_s - X_{\lf s \rf_{h_n}}\|_V  + \smallint_0^T \|   ( \Id_V - P_n ) S_{s}   \|_{L(W, V)} \,  ds \right].
\end{split}
\end{equation}
\end{enumerate}
\end{prop}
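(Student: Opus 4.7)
The plan is to apply Corollary \ref{cor:pathwise} at $t = T$, but the bound there is \emph{implicit}: its Mittag-Leffler factor depends on $\sup_{s \leq \lceil t \rceil_{h_n} - h_n} \max\{\|X_s\|_V, \|\mathcal{X}^n_s\|_V\}$, which in turn involves the very quantity we wish to control. First I would collect the following auxiliary observations: by hypothesis, $\kappa := \sup_{n \in \N} \|P_n\|_{L(V)} < \infty$, and $\sup_{n \in \N} \sup_{s \in [0,T]} \|\mathcal{O}^n_s\|_V < \infty$ (by the continuity of $O$ together with uniform convergence $\mathcal{O}^n \to O$); moreover, the ``first bracket'' $A_n$ appearing in Corollary \ref{cor:pathwise} tends to $0$ as $n \to \infty$ (this uses the uniform continuity of $X$ on $[0,T]$ for $\sup_s \|X_s - X_{\lfloor s \rfloor_{h_n}}\|_V \to 0$, the bound $|h_n|^\chi \to 0$, and the two limit hypotheses). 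A direct inspection shows $A_n \leq C_1 B_n$ for some constant $C_1$ independent of $n$, where
\begin{equation*}
B_n := \sup_{s \in [0,T]} \|O_s - \mathcal{O}^n_s\|_V + |h_n|^\chi + \sup_{s \in [0,T]} \|X_s - X_{\lfloor s \rfloor_{h_n}}\|_V + \int_0^T \|(\Id_V - P_n)S_s\|_{L(W,V)}\,ds
\end{equation*}
is exactly the right-hand side of (ii).

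Set $M := \sup_{s \in [0,T]} \|X_s\|_V$, $R := M + 1$, and
\begin{equation*}
C_R := \mathrm{E}_{1-\alpha}\!\bigl[T\,|\Gamma(1-\alpha)\,\Upsilon|^{1/(1-\alpha)}\bigl(\kappa\,\Psi(R) + (\sup\nolimits_{m \in \N}|h_m|^\chi)\sup\nolimits_{s \in [0,T]}\|F(X_s)\|_W\bigr)^{1/(1-\alpha)}\bigr] \in [1,\infty),
\end{equation*}
which is finite and independent of $n$. The bootstrap is an induction along the grid: for every $n$ with $A_n C_R \leq 1$ I claim
\begin{equation*}
\sup_{s \in [0,\, k h_n \wedge T]} \|X_s - \mathcal{X}^n_s\|_V \leq A_n C_R \qquad \text{for every } k \in \N_0.
\end{equation*}
The base case $k = 0$ follows from $\|X_0 - \mathcal{X}^n_0\|_V = \|O_0 - \mathcal{O}^n_0\|_V \leq A_n \leq A_n C_R$. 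For the step, the crucial structural fact is that for $t \in [k h_n,(k+1) h_n]$ one has $\max\{0, \lceil t \rceil_{h_n} - h_n\} \leq k h_n$; combining the induction hypothesis with $A_n C_R \leq 1$ gives $\sup_{s \leq \lceil t \rceil_{h_n}-h_n} \max\{\|X_s\|_V, \|\mathcal{X}^n_s\|_V\} \leq M + A_n C_R \leq R$. Applying Corollary \ref{cor:pathwise} at $t = (k+1)h_n \wedge T$ and using monotonicity of $\mathrm{E}_{1-\alpha}$ and of $\Psi$ then yields $\sup_{s \leq (k+1)h_n \wedge T}\|X_s - \mathcal{X}^n_s\|_V \leq A_n C_R$, closing the induction.

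Choosing $k$ with $k h_n \geq T$ delivers, for every sufficiently large $n$, the bound $\sup_{s \in [0,T]} \|X_s - \mathcal{X}^n_s\|_V \leq A_n C_R \leq C_1 C_R\, B_n$. Since $A_n \to 0$, this proves (i) and establishes (ii) for all but finitely many $n$. To extend (ii) to every $n \in \N$, I apply Corollary \ref{cor:pathwise} directly to the finitely many remaining $n$ to get some finite bound $K_n$ on $\sup_s \|X_s - \mathcal{X}^n_s\|_V$; because $B_n \geq |h_n|^\chi > 0$, enlarging the constant to $C := \max\{C_1 C_R,\, \max_{n \text{ small}} K_n / B_n\}$ gives (ii) uniformly in $n$.

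The main obstacle is precisely the implicit character of Corollary \ref{cor:pathwise}: a naive application produces a fixed-point inequality $e_n(T) \leq A_n \cdot \mathrm{E}_{1-\alpha}[\,\cdots\! e_n(T) \cdots]$ from which nothing can be extracted. What rescues the argument is that the sup inside $\Psi$ is taken up to $\max\{0, \lceil t \rceil_{h_n} - h_n\}$ rather than $t$; this strictly-past dependence turns the would-be fixed-point inequality into a one-step recursion along the temporal grid, which can be marched forward by induction.
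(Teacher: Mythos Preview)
Your proposal is correct and follows essentially the same route as the paper's proof: both exploit that the $\Psi$-argument in Corollary~\ref{cor:pathwise} depends only on times up to $\max\{0,\lceil t\rceil_{h_n}-h_n\}$, turning the implicit estimate into a one-step recursion that is closed by induction along the grid (the paper proves $\sup_{s\le kh_n}\|X_s-\mathcal{X}^n_s\|_V\le 1$, you prove the slightly sharper $\le A_nC_R$, but the mechanism is identical). One minor imprecision: your claim $\sup_{n\in\N}\sup_{s\in[0,T]}\|\mathcal{O}^n_s\|_V<\infty$ is not guaranteed by the hypotheses for \emph{small} $n$ (only the $\limsup$ is controlled); the fix is the one the paper implicitly uses---for any $n$ with $\sup_s\|\mathcal{O}^n_s\|_V=\infty$ the right-hand side of (ii) is also infinite since $O$ is continuous, so the inequality holds trivially, and for the remaining finitely many $n$ Corollary~\ref{cor:pathwise} applies directly.
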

\begin{proof}[Proof of Proposition \ref{prop:pathwise}.]
Note that the assumption that $\limsup_{n \to \infty} \sup\nolimits_{s \in [0,T]}  \| O_s - \mathcal{O}_s^{n} \|_V =0$ and the assumption that $O \in \mathcal{C}([0,T], V)$ imply that
\begin{align}
\begin{split}\label{prop:pathwise:O}
\limsup\nolimits_{n \to \infty} \sup\nolimits_{s \in [0,T]}  \|\mathcal{O}_s^{n} \|_V \leq \sup\nolimits_{s \in [0,T]}  \| O_s  \|_V +\limsup\nolimits_{n \to \infty} \sup\nolimits_{s \in [0,T]}  \| O_s - \mathcal{O}_s^{n}\|_V  < \infty.
\end{split}
\end{align}
This and the assumption  that $\limsup_{n \to \infty} ( \int_0^T \|   ( \Id_V - P_n ) S_s  \|_{L(W, V)}  \, ds + \sup_{s \in [0, T]}   \|O_s - \mathcal{O}_s^n\|_V )=0$ yield that
\begin{align}\label{pathwise:limit}
\begin{split}
&\limsup_{n \to \infty} \bigg[ \sup\nolimits_{s \in [0,T]}  \| O_s - \mathcal{O}_s^{n} \|_V + \frac{\Upsilon \, T^{1-\alpha}}{(1-\alpha)}  \Psi \big( \sup\nolimits_{s \in [0, T]} \|X_s\|_V\big)  \left[\sup\nolimits_{s \in [0, T]} \| X_s - X_{\lf s \rf_{h_n}} \|_V \right] \\
&  +  \frac{ \Upsilon \, |h_n|^{\chi} \, T^{1-\alpha}    }{ (1-\alpha)} \left( \sup\nolimits_{s \in [0, T]} \left\| F(X_s) \right\|_W \right)\left( \sup\nolimits_{s \in [0, T]} \left[ \left\| X_s \right\|_V + \left\|\mathcal{O}_s^{n} \right\|_V \right] \right) \\
& + \left( \sup\nolimits_{s \in [0, T]} \left\| F(X_s) \right\|_W \right) \smallint_0^T \left\|  ( \Id_V - P_n )  S_s   \right\|_{L(W, V)}  ds  \bigg] = 0.
\end{split}
\end{align}
Combining this with \eqref{prop:pathwise:O}  and the fact that $\sup_{ n \in \N } \big[ \| P_n \|_{ L(V) } + | h_n |^{ \chi } \big] < \infty$ ensures that there exists a natural number $N \in \N$ such that
\begin{align}\label{prop:pathwise:sup}
\sup\nolimits_{n \in \{N, N+1, \ldots\}}  \sup\nolimits_{s \in [0,T]}  \|\mathcal{O}_s^{n} \|_V < \infty
\end{align}
and
\begin{align}\label{prop:pathwise:1}
\begin{split}
& \sup_{n \in \{N, N+1, \ldots\}}  \Bigg( \bigg[\sup\nolimits_{s \in [0,T]}  \| O_s - \mathcal{O}_s^{n} \|_V + \frac{  \Upsilon \, T^{1-\alpha}}{(1-\alpha)}  \Psi \big( \sup\nolimits_{s \in [0, T]} \|X_s\|_V\big)  \left[\sup\nolimits_{s \in [0, T]} \| X_s - X_{\lf s \rf_{h_n}} \|_V \right] \\
&  +  \frac{ \Upsilon \, |h_n|^{\chi} \, T^{1-\alpha}    }{ (1-\alpha)} \left( \sup\nolimits_{s \in [0, T]} \left\| F(X_s) \right\|_W \right)\left( \sup\nolimits_{s \in [0, T]} \left[ \left\| X_s \right\|_V + \left\|\mathcal{O}_s^{n} \right\|_V \right] \right) \\
& + \left( \sup\nolimits_{s \in [0, T]} \left\| F(X_s) \right\|_W \right) \smallint_0^T \left\|  ( \Id_V - P_n ) S_s   \right\|_{L(W, V)}  ds  \bigg] \cdot \mathrm{E}_{1-\alpha} \Big[ T \, \big|\Gamma (1-\alpha) \,  \Upsilon\big|^{\nicefrac{1}{(1-\alpha)}} \big[  \left\|P_n \right\|_{L(V)}\\
& \cdot   \Psi\big( \sup\nolimits_{s \in [0,T]} \|X_s\|_V + 1  \big)  +   |h_n|^{\chi} \left( \sup\nolimits_{s \in [0, T]} \left\| F(X_s) \right\|_W \right)  \big]^{\nicefrac{1}{(1-\alpha)}} \Big]\Bigg) \leq 1 < \infty.
\end{split}
\end{align}	
Moreover, observe that the triangle inequality shows for all $t \in [0, T]$, $n \in \N$ that
\begin{align}\label{prop:pathwise:max}
\begin{split}
&\sup\nolimits_{s \in [0, t]} \max \!\big\{\|X_{s}\|_V , \|\Y_s^n\|_V \big\} \leq \sup\nolimits_{s \in [0, t]} \max \!\big\{\|X_{s}\|_V , \|X_{s}\|_V + \|X_s -\Y_s^n\|_V  \big\} \\
&= \sup\nolimits_{s \in [0, t]}  \big( \|X_{s}\|_V+ \|X_s - \Y_s^n\|_V \big) \leq  \sup\nolimits_{s \in [0, T]} \|X_{s}\|_V  + \sup\nolimits_{s \in [0, t]} \|X_s - \Y_s^n\|_V \\
& \leq \sup\nolimits_{s \in [0, T]}   \|X_{s}\|_V+ 2 \sup\nolimits_{s \in [0, t]} \max \!\big\{\|X_{s}\|_V , \|\Y_s^n\|_V \big\}.
\end{split}
\end{align}	
Combining Corollary \ref{cor:pathwise} with \eqref{prop:pathwise:sup} and the fact that $\Psi$ is non-decreasing hence proves for all $n \in \{N, N+1, \ldots \}$, $ k \in \N_0 \cap (-\infty, \nicefrac{T}{h_n}-1]$ that  $\sup\nolimits_{s \in [0, k h_n]} \|X_s -\Y_s^n\|_V < \infty$ and
\begin{align}\label{eq:prop:pathwise}
\begin{split}
& \sup\nolimits_{s \in [0, (k+1)h_n]} \|X_s - \Y_s^{n} \|_V  \\
& \leq \bigg[ \sup\nolimits_{s \in [0,T]}  \| O_s - \mathcal{O}_s^{n} \|_V + \frac{   \Upsilon \, T^{1-\alpha}}{(1-\alpha)}  \Psi \big( \sup\nolimits_{s \in [0, T]} \|X_s\|_V\big)  \left[\sup\nolimits_{s \in [0, T]} \| X_s - X_{\lf s \rf_{h_n}} \|_V \right] \\
&  +  \frac{ \Upsilon \, |h_n|^{\chi} \, T^{1-\alpha}    }{ (1-\alpha)} \left( \sup\nolimits_{s \in [0, T]} \left\| F(X_s) \right\|_W \right)\left( \sup\nolimits_{s \in [0, T]} \left[ \left\| X_s \right\|_V + \left\|\mathcal{O}_s^{n} \right\|_V \right] \right) \\
& + \left( \sup\nolimits_{s \in [0, T]} \left\| F(X_s) \right\|_W \right) \smallint_0^T \left\|  ( \Id_V - P_n ) S_{s}   \right\|_{L(W, V)}  ds  \bigg] \cdot \mathrm{E}_{1-\alpha} \Big[ T \,
\big|\Gamma (1-\alpha) \, \Upsilon\big|^{\nicefrac{1}{(1-\alpha)}} \big[  \left\|P_n \right\|_{L(V)}\\
& \cdot   \Psi\big( \sup\nolimits_{s \in [0,T]} \|X_s\|_V + \sup\nolimits_{s \in [0, k h_n]} \|X_s -\Y_s^n\|_V  \big)  +   |h_n|^{\chi} \left( \sup\nolimits_{s \in [0, T]} \left\| F(X_s) \right\|_W \right)  \big]^{\nicefrac{1}{(1-\alpha)}} \Big].
\end{split}
\end{align}
Next let $ n \in \{N, N+1, \ldots\}$. We then claim that for all $ k \in \N_0 \cap [0, \nicefrac{T}{h_n}]$ it holds that
\begin{align}\label{ind:gronwall}
 \sup\nolimits_{s \in [0, kh_n]}\|X_s - \Y_s^{n}\|_V \leq 1.
\end{align}
We prove \eqref{ind:gronwall} by induction on $ k \in \N_0 \cap [0, \nicefrac{T}{h_n}]$. Combining \eqref{prop:pathwise:1} and the fact that $n \in \{N, N+1, \ldots\}$ with the fact that $ \forall \, x \in [0, \infty) \colon \mathrm{E}_{1-\alpha}[x] \geq 1$ shows  that
\begin{align}
\begin{split}
&\sup\nolimits_{s \in \{0\}} \|X_s - \Y_s^{n}\|_V= \|X_0 - \Y_0^{n}\|_V= \|O_0 - \mathcal{O}_0^{n}\|_V  \leq \sup\nolimits_{s \in [0,T]}  \| O_s - \mathcal{O}_s^{n} \|_V \leq 1.
\end{split}
\end{align}
This proves  \eqref{ind:gronwall} in the base case $k=0$. The induction step $\N_0 \cap (-\infty, \nicefrac{T}{h_n}-1] \ni k \to k+1 \in \N \cap (-\infty, \nicefrac{T}{h_n}]$ is an immediate consequence of \eqref{prop:pathwise:1}, \eqref{eq:prop:pathwise}, and the induction hypothesis. Induction hence proves \eqref{ind:gronwall}. Inequality \eqref{ind:gronwall}, in particular, shows  that for all $n \in \{N, N+1, \ldots\}$ it holds that 
\begin{align}\label{prop:pathwise:ceil}
 \sup\nolimits_{s \in [0, \max\{0, \lceil T \rceil_{h_n}-h_n \}]}  \|X_s - \Y_s^n \|_V \leq 1.
\end{align}
In the next step we combine \eqref{prop:pathwise:ceil} and the fact that $ \forall \, n \in \{N, N+1, \ldots\} \colon \sup_{s \in [0,T]} \|\mathcal{O}_s^n\|_V < \infty$ with Corollary \ref{cor:pathwise} and \eqref{prop:pathwise:max} to obtain that for all $n \in \{N, N+1, \ldots\}$ it holds that
\begin{align}\label{prop:pathwise:T}
\begin{split}
& \sup\nolimits_{s \in [0, T]} \|X_s - \Y_s^{n} \|_V  \\
& \leq \bigg[ \sup\nolimits_{s \in [0,T]}  \| O_s - \mathcal{O}_s^{n} \|_V + \frac{   \Upsilon \, T^{1-\alpha}}{(1-\alpha)}  \Psi \big( \sup\nolimits_{s \in [0, T]} \|X_s\|_V\big)  \left[\sup\nolimits_{s \in [0, T]} \| X_s - X_{\lf s \rf_{h_n}} \|_V \right] \\
&  +  \frac{ \Upsilon \, |h_n|^{\chi} \, T^{1-\alpha}    }{ (1-\alpha)} \left( \sup\nolimits_{s \in [0, T]} \left\| F(X_s) \right\|_W \right)\left( \sup\nolimits_{s \in [0, T]} \left[ \left\| X_s \right\|_V + \left\|\mathcal{O}_s^{n} \right\|_V \right] \right) \\
& + \left( \sup\nolimits_{s \in [0, T]} \left\| F(X_s) \right\|_W \right) \smallint_0^T \left\|  ( \Id_V - P_n ) S_{s}   \right\|_{L(W, V)}  ds  \bigg] \cdot \mathrm{E}_{1-\alpha} \Big[ T \,
\big|\Gamma (1-\alpha) \, \Upsilon\big|^{\nicefrac{1}{(1-\alpha)}} \big[  \left\|P_n \right\|_{L(V)}\\
& \cdot   \Psi\big( \sup\nolimits_{s \in [0,T]} \|X_s\|_V + 1  \big)  +   |h_n|^{\chi} \left( \sup\nolimits_{s \in [0, T]} \left\| F(X_s) \right\|_W \right)  \big]^{\nicefrac{1}{(1-\alpha)}} \Big].
\end{split}
\end{align}
This and the fact that $\sup_{ n \in \N } \big[ \| P_n \|_{ L(V) } + | h_n |^{ \chi } \big] < \infty$ imply  \eqref{prop:pahthwise:eq2}. Moreover, \eqref{prop:pathwise:T}, the fact that $\sup_{ n \in \N } \big[ \| P_n \|_{ L(V) } + | h_n |^{ \chi } \big] < \infty$, and \eqref{pathwise:limit} prove $\limsup_{ n \to \infty } \sup_{ s \in [0,T] } \| X_s - \mathcal{X}^N_s \|_V = 0$. The proof of Proposition \ref{prop:pathwise} is thus completed.
\end{proof}
%%%%%%%%%%%%%%%%%%%%%%%%%%%%%%%%%%%%%%%%%%%%%%%%%%%%%%%%%%%%%%%%%%%%%%%%%%%%%%%%%%%%%%%%%%%%

\section{Strong convergence}
\label{sec:strong_error}

In this section we accomplish in Theorem~\ref{theorem:main} strong convergence for our approximation scheme. Before we establish Theorem~\ref{theorem:main}, we present in Lemma~\ref{lem:as}, Lemma~\ref{lem:charac}, Lemma~\ref{lem:outer}, Corollary~\ref{cor:as:outer}, and Proposition~\ref{prop:vitali} a few elementary results on an appropriate convergence concept for random fields. We employ Corollary~\ref{cor:as:outer} and Proposition~\ref{prop:vitali} in the proof of Theorem~\ref{theorem:main}.

\subsection{Weakly uniform convergence in probability}
\label{subsec:uniform}

\begin{lemma}\label{lem:as}
Let $(\Omega, \mathcal{F}, \P)$ be a probability space,  let $\P^* \colon \mathcal{P}(\Omega) \to [0, \infty]$ be the mapping with the property that for all $A \in \mathcal{P}(\Omega)$ it holds that
 $\P^*(A)= \inf\! \left( \left\{  \P(B) \in [0,1] \colon( B \in \mathcal{F} \text{ and } A \subseteq B  ) \right\} \right)$, let $\tilde{\Omega} \in \{ A \in \mathcal{P}(\Omega) \colon \P^*(A)=1\}$, and let $X_n \colon \Omega \to \R \cup \{\infty, -\infty\}$, $n \in \N$, be mappings which satisfy for all $\omega \in \tilde{\Omega}$ that $ \limsup_{n \to \infty} |X_n(\omega)|=0$. Then it holds for all $\varepsilon \in (0, \infty)$ that $\liminf_{n \to \infty} \P^*(|X_n| \leq \varepsilon)=1$.
\end{lemma}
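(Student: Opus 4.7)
\bigskip

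\noindent\textbf{Proof sketch of Lemma~\ref{lem:as}.}
The plan is to reduce the statement, which involves the outer measure $\P^*$ on arbitrary (possibly non-measurable) subsets, to a monotone convergence statement inside the $\sigma$-algebra $\mathcal{F}$ via measurable envelopes. Fix $\varepsilon \in (0,\infty)$, and define the sets
\begin{equation*}
  A_n = \{ \omega \in \Omega \colon |X_n(\omega)| \leq \varepsilon \},
  \qquad
  B_n = \bigcap\nolimits_{k \geq n} A_k, \qquad n \in \N.
\end{equation*}
The sequence $(B_n)_{n \in \N}$ is increasing and satisfies $B_n \subseteq A_n$ for every $n \in \N$. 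First I would use the hypothesis that $\limsup_{n\to\infty}|X_n(\omega)|=0$ for every $\omega \in \tilde{\Omega}$ to observe that for each such $\omega$ there exists $N(\omega) \in \N$ with $|X_k(\omega)| \leq \varepsilon$ for all $k \geq N(\omega)$, hence $\omega \in B_{N(\omega)}$. This shows the inclusion $\tilde{\Omega} \subseteq \bigcup_{n \in \N} B_n$.

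Next I would construct measurable envelopes. For each $n \in \N$, the definition of $\P^*$ as an infimum over measurable supersets provides, by standard intersection of a minimizing sequence, a set $\tilde{C}_n \in \mathcal{F}$ with $B_n \subseteq \tilde{C}_n$ and $\P(\tilde{C}_n) = \P^*(B_n)$. To make the envelopes increasing, I would set
\begin{equation*}
  C_n = \bigcap\nolimits_{k \geq n} \tilde{C}_k, \qquad n \in \N.
\end{equation*}
Since $B_n \subseteq B_k \subseteq \tilde{C}_k$ for all $k \geq n$, we have $B_n \subseteq C_n \in \mathcal{F}$, so $\P^*(B_n) \leq \P(C_n) \leq \P(\tilde{C}_n) = \P^*(B_n)$, which gives $\P(C_n) = \P^*(B_n)$. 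The sets $(C_n)_{n\in\N}$ are manifestly increasing in $n$.

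With the reduction complete, I would apply continuity from below of the probability measure $\P$ to conclude that $\lim_{n\to\infty}\P(C_n) = \P\!\left(\bigcup_{n \in \N} C_n\right)$. The inclusion $\tilde{\Omega} \subseteq \bigcup_{n\in\N} B_n \subseteq \bigcup_{n\in\N} C_n$ together with monotonicity of $\P^*$ and the hypothesis $\P^*(\tilde{\Omega})=1$ forces $\P\!\left(\bigcup_{n\in\N} C_n\right) = 1$. Combining these with $\P^*(A_n) \geq \P^*(B_n) = \P(C_n)$ and the trivial upper bound $\P^*(A_n) \leq \P^*(\Omega) = 1$ yields
\begin{equation*}
  1
  \;=\; \lim_{n\to\infty} \P(C_n)
  \;=\; \lim_{n\to\infty} \P^*(B_n)
  \;\leq\; \liminf_{n\to\infty} \P^*(A_n)
  \;\leq\; 1,
\end{equation*}
which is the claim. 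The only real subtlety, and the one technical step requiring care, is the construction of the increasing sequence of measurable envelopes with matching outer probabilities, since $\P^*$ is only an outer measure and in general fails continuity from below on non-measurable increasing sequences; everything else is elementary monotonicity together with the standard downward continuity of $\P$.
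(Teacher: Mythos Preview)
Your proof is correct and follows essentially the same route as the paper: both arguments pass from $\{|X_n|\leq\varepsilon\}$ to the increasing tail sets $B_n=\bigcap_{k\geq n}\{|X_k|\leq\varepsilon\}$ (the paper writes these as $\{Y_n\leq\varepsilon\}$ with $Y_n=\sup_{k\geq n}|X_k|$), observe $\tilde\Omega\subseteq\bigcup_n B_n$, and then use continuity from below of $\P^*$ on increasing sequences together with monotonicity. The only difference is that the paper invokes Proposition~1.5.12 in Bogachev for the continuity-from-below step, whereas you supply a self-contained proof via measurable envelopes; your construction of the increasing envelopes $C_n$ is exactly what underlies that proposition.
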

\begin{proof}[Proof of Lemma~\ref{lem:as}.]
Throughout this proof let $Y_n \colon \Omega \to [0, \infty]$, $n \in \N$, be the mappings with the property that for all $n \in \N$ it holds that 
\begin{align}
Y_n= \sup_{m \in \N \cap [n, \infty)} |X_m|.
\end{align}
Note that the fact that $ \forall \, n \in \N \colon Y_{n+1} \leq Y_n$ ensures that for all $ n \in \N$,  $\varepsilon \in (0, \infty)$ it holds that $\{Y_n \leq \varepsilon\} \subseteq \{Y_{n+1} \leq \varepsilon \}$. Proposition~1.5.12 in Bogachev~\cite{Bogachev2007} and the fact that $\P^* \colon \mathcal{P}(\Omega) \to [0, \infty]$ is non-decreasing hence prove for all $ \varepsilon \in (0, \infty)$ that 
\begin{align}\label{eq:lem:as:1}
\P^*(\cup_{n \in \N} \{Y_n \leq \varepsilon\} ) = \liminf\limits_{n \to \infty} \P^*( Y_n \leq \varepsilon) \leq \liminf\limits_{n \to \infty} \P^*( |X_n| \leq \varepsilon).
\end{align}
Moreover, again the fact that $\P^* \colon \mathcal{P}(\Omega) \to [0, \infty]$ is non-decreasing shows that for all $\varepsilon \in (0, \infty)$ it holds that
\begin{align}
\begin{split}
\P^*(\cup_{n \in \N} \{Y_n \leq \varepsilon\} ) & = \P^*\!\big( \{\exists \, n \in \N \colon Y_n \leq \varepsilon \}\big) = \P^*\!\big( \{\exists \,  n \in \N  \colon ( \forall \, m \in \N \cap [n, \infty) \colon |X_m| \leq \varepsilon )\}\big) \\
&\geq \P^*(\tilde{\Omega}) =1.
\end{split}
\end{align}
Combining this with \eqref{eq:lem:as:1}, the fact that $\P^* \colon \mathcal{P}(\Omega) \to [0, \infty]$ is non-decreasing, and the fact that $\P^* |_{\mathcal{F}} = \P$ ensures that for all $\varepsilon \in (0, \infty)$ it holds that
\begin{align}
\begin{split}
1 \leq \P^*(\cup_{n \in \N} \{Y_n \leq \varepsilon\} )   \leq \liminf\limits_{n \to \infty} \P^*( |X_n| \leq \varepsilon) \leq  \P^*(\Omega)= \P(\Omega)=1. 
\end{split}
\end{align}
This completes proof of Lemma~\ref{lem:as}.
\end{proof}

%%%%%%%%%%%%%%%%%%%%%%%%%%%%%%%%%%%%%%%%%%%%%%%%%%%%%%%%%%%%

\begin{lemma}\label{lem:charac}                    
Let $I$ be a non-empty set, let $(\Omega, \mathcal{F}, \P)$ be a probability space, let $c \in (0, \infty)$, and let $X^n \colon I \times \Omega \to \R \cup \{\infty, -\infty\}$, $n \in \N$, be random fields. Then the following three statements are equivalent:
\begin{enumerate}[(i)]
     \item It holds for all $\varepsilon \in (0, \infty)$ that 
	$\limsup_{n \to \infty} \sup_{i \in I} \P( |X_i^n| \geq \varepsilon)=0$.
	\item It holds for all $\varepsilon \in (0, \infty)$ that 
	$\liminf_{n \to \infty}\inf_{i \in I} \P( |X_i^n| \leq \varepsilon)=1$.
	\item It holds that $\limsup_{n \to \infty} \sup_{i \in I} \E \big[\! \min\{c, |X_i^n|\}\big]=0$.
\end{enumerate}
\end{lemma}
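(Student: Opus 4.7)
The plan is to establish the chain of equivalences (i)$\Leftrightarrow$(ii)$\Leftrightarrow$(iii) by elementary complementation together with a truncation bound and Markov's inequality. Throughout I would note that, although the $X_i^n$ may attain the values $\pm\infty$, on $\{|X_i^n|=\infty\}$ one has $\min\{c,|X_i^n|\}=c$ and $|X_i^n|\geq\varepsilon$ for every $\varepsilon\in(0,\infty)$, so the truncated variable $\min\{c,|X_i^n|\}$ is a bona fide $[0,c]$-valued random variable and all the usual manipulations of expectation and probability go through.

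For (i)$\Leftrightarrow$(ii) I would pass to complements: $\P(|X_i^n|\leq\varepsilon)=1-\P(|X_i^n|>\varepsilon)$, whence $\inf_{i\in I}\P(|X_i^n|\leq\varepsilon)=1-\sup_{i\in I}\P(|X_i^n|>\varepsilon)$. Taking $\liminf_{n\to\infty}$ on the left gives $1-\limsup_{n\to\infty}\sup_{i\in I}\P(|X_i^n|>\varepsilon)$, so (ii) is equivalent to $\limsup_{n\to\infty}\sup_{i\in I}\P(|X_i^n|>\varepsilon)=0$ for every $\varepsilon\in(0,\infty)$. The sandwich $\{|X_i^n|>\varepsilon\}\subseteq\{|X_i^n|\geq\varepsilon\}\subseteq\{|X_i^n|>\varepsilon/2\}$ then shows that the open and closed versions hold or fail simultaneously (as $\varepsilon$ ranges over $(0,\infty)$), yielding (i)$\Leftrightarrow$(ii).

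For (i)$\Rightarrow$(iii) I would fix $\varepsilon\in(0,c)$ and use the pointwise bound
\[
\min\{c,|X_i^n|\}\leq\varepsilon+c\,\mathbbm{1}_{\{|X_i^n|\geq\varepsilon\}},
\]
take expectation, then $\sup_{i\in I}$, then $\limsup_{n\to\infty}$, and invoke (i) to obtain $\limsup_{n\to\infty}\sup_{i\in I}\E[\min\{c,|X_i^n|\}]\leq\varepsilon$; letting $\varepsilon\downarrow 0$ gives (iii). Conversely, for (iii)$\Rightarrow$(i) I would apply Markov's inequality to the nonnegative variable $\min\{c,|X_i^n|\}$: since $\{|X_i^n|\geq\varepsilon\}\subseteq\{\min\{c,|X_i^n|\}\geq\min\{c,\varepsilon\}\}$, it follows that
\[
\P(|X_i^n|\geq\varepsilon)\leq\frac{\E[\min\{c,|X_i^n|\}]}{\min\{c,\varepsilon\}},
\]
and taking $\sup_{i\in I}$ followed by $\limsup_{n\to\infty}$ reduces (i) to (iii).

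None of the three implications presents a genuine obstacle; the argument is a routine assembly of complementation, the one-line truncation estimate, and Markov's inequality. The only conceptual content is that the bounded truncation $\min\{c,\cdot\}$ turns convergence in probability (uniform in $i\in I$) into convergence in $L^{1}$ (uniform in $i\in I$) and vice versa, which is precisely the content of the equivalence (i)$\Leftrightarrow$(iii).
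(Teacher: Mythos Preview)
Your proposal is correct and follows essentially the same approach as the paper: complementation for (i)$\Leftrightarrow$(ii), Markov's inequality on the truncated variable for (iii)$\Rightarrow$(i), and the splitting bound $\min\{c,|X_i^n|\}\leq \varepsilon + c\,\mathbbm{1}_{\{|X_i^n|\geq\varepsilon\}}$ for (i)$\Rightarrow$(iii). Your handling of the open/closed tail discrepancy via the sandwich $\{|X_i^n|>\varepsilon\}\subseteq\{|X_i^n|\geq\varepsilon\}\subseteq\{|X_i^n|>\varepsilon/2\}$ and your use of $\min\{c,\varepsilon\}$ in the Markov step are minor refinements (the paper simply restricts to $\varepsilon\in(0,c)$ and uses $\{|X_i^n|\geq\varepsilon\}=\{\min\{c,|X_i^n|\}\geq\varepsilon\}$ directly), but the argument is the same in substance.
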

\begin{proof}[Proof of Lemma~\ref{lem:charac}.]
First, note that Markov's inequality proves for all $\varepsilon \in (0,c)$, $n \in \N$, $i \in I$ that
\begin{align}
\P(|X_i^n| \geq \varepsilon) = \P(\min\{c, |X_i^N|\} \geq \varepsilon) \leq \frac{\E \big[\! \min\{c, |X_i^n|\}\big]}{\varepsilon}.
\end{align}
This shows that $((iii) \Rightarrow (i))$. In the next step observe for all $\varepsilon \in (0, \infty)$ that 
\begin{align}
\begin{split}
\limsup_{n \to \infty} \sup_{i \in I} \P( |X_i^n| \geq \varepsilon) & = \limsup_{n \to \infty} \sup_{i \in I} \big[ 1 - \P( |X_i^n| < \varepsilon) \big] =  \limsup_{n \to \infty}  \left[ 1 - \inf_{i \in I} \P( |X_i^n| < \varepsilon) \right] \\
& = 1- \liminf_{n \to \infty}\inf_{i \in I} \P( |X_i^n| < \varepsilon). 
\end{split}
\end{align}
This ensures that $((i) \Leftrightarrow  (ii))$. It thus remains to prove that $((i) \Rightarrow (iii))$. Note that for all $\varepsilon \in (0, \infty)$ it holds that
\begin{align}
\begin{split}
&\limsup_{n \to \infty} \sup_{i \in I}\E \big[\! \min\{c, |X_i^n|\}\big] \\
& \leq \limsup_{n \to \infty} \sup_{i \in I} \E \big[\mathbbm{1}_{\{|X_i^n| \geq \varepsilon \}} \min\{c, |X_i^n|\}\big] + \limsup_{n \to \infty} \sup_{i \in I} \E \big[\mathbbm{1}_{\{|X_i^n| < \varepsilon \}} \min\{c, |X_i^n|\}\big] \\
& \leq c  \limsup_{n \to \infty} \sup_{i \in I} \P(|X_i^n| \geq \varepsilon) + \varepsilon.
\end{split}
\end{align}
This shows that $((i) \Rightarrow (iii))$. The proof of Lemma~\ref{lem:charac} is thus completed.
\end{proof}

%%%%%%%%%%%%%%%%%%%%%%%%%%%%%%%%%%%%%%%%%%%%%%%%%%%%%%%%%%%%%%%%%%%%%

\begin{lemma}\label{lem:outer}
Let $\Omega$ and $I$ be non-empty sets, let $\mu \colon \mathcal{P}(\Omega) \to [0, \infty]$ be a non-decreasing mapping, and let $X^n \colon I \times \Omega \to \R \cup \{\infty, -\infty\}$, $n \in \N$, be mappings. Then it holds for all $\varepsilon \in (0, \infty)$, $n \in \N$ that $\inf_{i \in I} \mu( |X_i^n| \leq \varepsilon) \geq \mu(\sup\nolimits_{i \in I} |X_i^n| \leq \varepsilon)$ and $ \liminf_{m \to \infty} \inf_{i \in I} \mu( |X_i^m| \leq \varepsilon) \geq \liminf_{m \to \infty} \mu(\sup\nolimits_{i \in I} |X_i^m| \leq \varepsilon)$.
\end{lemma}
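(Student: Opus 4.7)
The plan is to exploit the set-theoretic inclusion
\begin{equation*}
\{\sup\nolimits_{j \in I} |X_j^n| \leq \varepsilon\} \subseteq \{|X_i^n| \leq \varepsilon\}
\end{equation*}
which holds for every $\varepsilon \in (0,\infty)$, every $n \in \N$, and every $i \in I$ as an immediate consequence of the definition of the supremum (if the supremum of a family of non-negative extended reals is at most $\varepsilon$, then each individual member is).

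First, I would apply the hypothesis that $\mu$ is non-decreasing on $\mathcal{P}(\Omega)$ to the above inclusion to deduce that for all $\varepsilon \in (0,\infty)$, $n \in \N$, $i \in I$ it holds that
\begin{equation*}
\mu\big(\sup\nolimits_{j \in I} |X_j^n| \leq \varepsilon\big) \leq \mu\big(|X_i^n| \leq \varepsilon\big).
\end{equation*}
Taking the infimum over $i \in I$ on the right-hand side (the left-hand side is independent of $i$) then yields the first claim of the lemma.

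Second, from the first claim it follows that for every $\varepsilon \in (0,\infty)$ the inequality
\begin{equation*}
\mu\big(\sup\nolimits_{i \in I} |X_i^m| \leq \varepsilon\big) \leq \inf\nolimits_{i \in I} \mu\big(|X_i^m| \leq \varepsilon\big)
\end{equation*}
holds for every $m \in \N$. Taking the $\liminf$ as $m \to \infty$ on both sides and using the monotonicity of $\liminf$ with respect to pointwise inequalities of sequences of extended reals then yields the second claim. There is no real obstacle here; the only subtle point is remembering that $\mu$ is defined on all of $\mathcal{P}(\Omega)$ (not merely a sigma-algebra) and is only assumed to be monotone — no additive or continuity properties are invoked, so set-inclusion plus monotonicity is all that is needed.
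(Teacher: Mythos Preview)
Your proposal is correct and follows essentially the same approach as the paper: use the set inclusion $\{\sup_{j\in I}|X_j^n|\le\varepsilon\}\subseteq\{|X_i^n|\le\varepsilon\}$, apply monotonicity of $\mu$, take the infimum over $i$, and then the $\liminf$ over $m$. The paper's proof is even terser (it leaves the set inclusion implicit and omits the $\liminf$ step as obvious), but the argument is identical.
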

\begin{proof}[Proof of Lemma~\ref{lem:outer}.]
Note that the fact that $\mu \colon \mathcal{P}(\Omega) \to [0, \infty]$ is non-decreasing ensures that for all $n \in \N$, $j \in I$, $\varepsilon \in (0, \infty)$ it holds that
\begin{align}
\mu( |X_j^n| \leq \varepsilon) \geq \mu(\sup\nolimits_{i \in I} |X_i^n| \leq \varepsilon).
\end{align}
This yields for all $n \in \N$, $\varepsilon \in (0, \infty)$ that $\inf_{i \in I} \mu( |X_i^n| \leq \varepsilon) \geq \mu(\sup\nolimits_{i \in I} |X_i^n| \leq \varepsilon)$.
This completes the proof of Lemma~\ref{lem:outer}.
\end{proof}

Informally speaking, the following corollary, Corollary~\ref{cor:as:outer},
shows that convergence uniformly in an index set $I$ on a measurable set of probability $1$
implies convergence in probability uniformly in the index set.
This statement is nontrivial since arbitrary suprema over random variables are, in general, 
not random variables.
\begin{cor}\label{cor:as:outer}
Let $(\Omega, \mathcal{F}, \P)$ be a probability space, let $\tilde{\Omega} \in \{ A \in \mathcal{F} \colon \P(A)=1\}$, let $I$ be a non-empty set, and let $X^n \colon I \times \Omega \to \R \cup \{\infty, -\infty\}$, $n \in \N$, be random fields which satisfy for all $\omega \in \tilde{\Omega}$ that $\limsup_{n \to \infty} \sup_{i \in I} |X_i^n(\omega)|=0$. Then it holds for all $\varepsilon \in (0, \infty)$ that $\limsup_{n \to \infty} \sup_{i \in I} \P(|X_i^n| \geq \varepsilon)=0$.
\end{cor}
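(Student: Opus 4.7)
The plan is to combine the three preparatory lemmas in a straightforward chain. Define, for each $ n \in \N $, the (possibly non-measurable) mapping $ Y_n \colon \Omega \to [0,\infty] $ by $ Y_n(\omega) = \sup_{i \in I} | X_i^n(\omega) | $, and let $ \P^* \colon \mathcal{P}(\Omega) \to [0,\infty] $ be the outer measure associated with $ \P $ as in Lemma~\ref{lem:as}. By hypothesis, $ \limsup_{n \to \infty} | Y_n(\omega) | = 0 $ for every $ \omega \in \tilde{\Omega} $, and since $ \tilde{\Omega} \in \mathcal{F} $ with $ \P(\tilde{\Omega}) = 1 $ we also have $ \P^*(\tilde{\Omega}) = 1 $.

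I would first invoke Lemma~\ref{lem:as} applied to the sequence $ (Y_n)_{n \in \N} $ to conclude that for every $ \varepsilon \in (0,\infty) $ it holds that
\begin{equation}
\liminf_{n \to \infty} \P^*( | Y_n | \leq \varepsilon ) \; = \; 1.
\end{equation}
Next I would apply Lemma~\ref{lem:outer} with $ \mu = \P^* $ (which is non-decreasing) to obtain, for every $ \varepsilon \in (0,\infty) $ and every $ n \in \N $, the inequality
\begin{equation}
\inf_{i \in I} \P^*( | X_i^n | \leq \varepsilon ) \; \geq \; \P^*( \sup\nolimits_{i \in I} | X_i^n | \leq \varepsilon ) \; = \; \P^*( | Y_n | \leq \varepsilon ).
\end{equation}
Since each $ X_i^n $ is a random variable, the event $ \{ | X_i^n | \leq \varepsilon \} $ lies in $ \mathcal{F} $, so $ \P^*( | X_i^n | \leq \varepsilon ) = \P( | X_i^n | \leq \varepsilon ) $. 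Taking $ \liminf_{n \to \infty} $ on both sides therefore yields $ \liminf_{n \to \infty} \inf_{i \in I} \P( | X_i^n | \leq \varepsilon ) = 1 $.

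Finally, I would appeal to Lemma~\ref{lem:charac} (with any choice of $ c \in (0,\infty) $), whose equivalence $ (i) \Leftrightarrow (ii) $ turns the previous display into the desired conclusion $ \limsup_{n \to \infty} \sup_{i \in I} \P( | X_i^n | \geq \varepsilon ) = 0 $. There is essentially no obstacle here beyond bookkeeping: the only subtle point is precisely the one flagged in the paper, namely that $ Y_n $ need not be measurable, which is exactly why the argument is routed through the outer measure $ \P^* $ and the one-sided inequality of Lemma~\ref{lem:outer} rather than through an identity between $ \P(\sup_i |X_i^n| \leq \varepsilon) $ and $ \inf_i \P(|X_i^n| \leq \varepsilon) $.
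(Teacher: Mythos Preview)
Your proof is correct and follows essentially the same route as the paper's: define $Y_n = \sup_{i\in I}|X_i^n|$, apply Lemma~\ref{lem:as} via the outer measure $\P^*$, pass to the individual (measurable) events through Lemma~\ref{lem:outer} and $\P^*|_{\mathcal F}=\P$, and finish with the equivalence in Lemma~\ref{lem:charac}. The only difference is expository ordering; there is no substantive deviation.
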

\begin{proof}[Proof of Corollary~\ref{cor:as:outer}.]
Throughout this proof let $\P^* \colon \mathcal{P}(\Omega) \to [0, \infty]$ be the mapping with the property that for all $A \in \mathcal{P}(\Omega)$ it holds that $\P^*(A)= \inf\! \left( \left\{  \P(B) \in [0,1] \colon( B \in \mathcal{F} \text{ and } A \subseteq B  ) \right\}  \right)$ and let $Y_n \colon \Omega \to \R \cup \{\infty, -\infty \}$, $n \in \N$, be the mappings with the property that for all $n \in \N$, $\omega \in \Omega$ it holds that $ Y_n(\omega)= \sup_{i \in I} |X_i^n(\omega)|$. Next note that $\P^*(\tilde{\Omega})=1$. Combining Lemma~\ref{lem:as} with the fact that $ \forall \, \omega \in \tilde{\Omega} \colon  \limsup_{n \to \infty} |Y_n(\omega)|=0$  hence proves for all $\varepsilon \in (0, \infty)$ that $\liminf_{n \to \infty} \P^*(|Y_n| \leq \varepsilon)=1$. This implies  for all $\varepsilon \in (0, \infty)$ that $\liminf_{n \to \infty} \P^*(\sup_{i \in I} |X_i^n| \leq \varepsilon)=1$.  The fact that $\P^*|_{\mathcal{F}} = \P$ and Lemma~\ref{lem:outer} therefore prove that
\begin{align}
\liminf_{n \to \infty} \inf_{i \in I} \P( |X_i^n| \leq \varepsilon) = \liminf_{n \to \infty} \inf_{i \in I} \P^*( |X_i^n| \leq \varepsilon) \geq \liminf_{n \to \infty} \P^*(\sup\nolimits_{i \in I} |X_i^n| \leq \varepsilon)=1.
\end{align}
Hence, it holds for all $\varepsilon \in (0, \infty)$ that $\liminf_{n \to \infty} \inf_{i \in I} \P( |X_i^n| \leq \varepsilon)=1$. Combining this with Lemma~\ref{lem:charac} shows that for all $ \varepsilon \in (0, \infty)$ it holds that $\limsup_{n \to \infty} \sup_{i \in I} \P( |X_i^n| \geq \varepsilon)=0$. The proof of Corollary~\ref{cor:as:outer} is thus completed.
\end{proof}

%%%%%%%%%%%%%%%%%%%%%%%%%%%%%%%%%%%%%%%%%%%%%%%%%%%%%%%%%%%%%%%%%%%%%%%%%%%

Informally speaking, the following proposition, Proposition~\ref{prop:vitali},
proves for all $p\in(0,\infty)$ that convergence in probability uniformly in an index set $I$
together with uniform moment bounds of the approximations implies for every $q\in(0,p)$
$L^q$-convergence uniformly in $I$.
In applications to stochastic processes the index set $I$ can be a time interval.
\begin{prop}\label{prop:vitali}
Let $I$ be a non-empty set, let $(\Omega, \mathcal{F}, \P)$ be a probability space, let $p \in (0, \infty)$, let $(V, \left\| \cdot \right\|_V)$ be a separable normed $\R$-vector space, and let $X^n \colon I \times \Omega \to V$, $n \in \N_0$, be random fields which satisfy for all $\varepsilon \in (0, \infty)$ that $\limsup_{n \to \infty} \sup_{i \in I} \E \big[\|X_i^n\|_V^p\big] < \infty$ and $\limsup_{n \to \infty} \sup_{i \in I} \P(\|X_i^0- X_i^n \|_V \geq \varepsilon)=0$. Then it holds for all $q \in (0, p)$ that  $\limsup_{n \to \infty} \sup_{i \in I} \E\big[ \|X_i^0 - X_i^n \|_V^q\big]=0$ and $\sup_{i \in I} \E\big[\|X_i^0\|_V^p\big]< \infty$.
\end{prop}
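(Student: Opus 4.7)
My plan is to adapt the classical Vitali-type convergence argument to the uniform-in-$i$ setting, proceeding in two essentially independent stages.

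\textbf{Step 1: Uniform moment bound on $X^0$.} Set $M := 1 + \limsup_{n\to\infty}\sup_{i\in I}\E[\|X_i^n\|_V^p]$, which is finite by hypothesis, and pick $N_0 \in \N$ with $\sup_{n \geq N_0}\sup_{i\in I}\E[\|X_i^n\|_V^p] \leq M$. For each fixed $i \in I$ the uniform-in-$i$ convergence-in-probability hypothesis entails $\limsup_{n\to\infty}\P(\|X_i^0 - X_i^n\|_V \geq \varepsilon) = 0$ for every $\varepsilon > 0$, so by the usual Riesz subsequence argument I would extract a subsequence $(n_k) = (n_k(i)) \subseteq \{N_0, N_0+1, \ldots\}$ along which $\|X_i^{n_k} - X_i^0\|_V \to 0$ almost surely. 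Separability of $V$ ensures that $\|X_i^n\|_V$ and $\|X_i^0 - X_i^n\|_V$ are bona fide real random variables, and the reverse triangle inequality then yields $\|X_i^{n_k}\|_V \to \|X_i^0\|_V$ a.s.; Fatou's lemma gives $\E[\|X_i^0\|_V^p] \leq \liminf_k \E[\|X_i^{n_k}\|_V^p] \leq M$. Since $M$ is independent of $i$, this yields $\sup_{i \in I}\E[\|X_i^0\|_V^p] \leq M < \infty$.

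\textbf{Step 2: The uniform $L^q$-estimate.} Fix $q \in (0, p)$ and $\varepsilon \in (0,\infty)$, and use the elementary split
\[
\E[\|X_i^0 - X_i^n\|_V^q] = \E\!\bigl[\|X_i^0 - X_i^n\|_V^q\,\one_{\{\|X_i^0 - X_i^n\|_V \leq \varepsilon\}}\bigr] + \E\!\bigl[\|X_i^0 - X_i^n\|_V^q\,\one_{\{\|X_i^0 - X_i^n\|_V > \varepsilon\}}\bigr].
\]
The first summand is trivially bounded by $\varepsilon^q$, and H\"older's inequality with exponent $p/q > 1$ bounds the second by
\[
\bigl(\E[\|X_i^0 - X_i^n\|_V^p]\bigr)^{q/p}\,\P\!\bigl(\|X_i^0 - X_i^n\|_V > \varepsilon\bigr)^{1 - q/p}.
\]
Using the triangle inequality together with the elementary bound $(a+b)^p \leq 2^{\max\{p-1, 0\}}(a^p + b^p)$, combined with Step~1 and the moment hypothesis, I obtain a finite constant $K := \sup_{i\in I,\ n \geq N_0} \E[\|X_i^0 - X_i^n\|_V^p] < \infty$. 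Taking $\sup_{i \in I}$ and then $\limsup_{n\to\infty}$ in the split above, the second summand vanishes uniformly in $i$ by the second hypothesis (with $K^{q/p}$ as a harmless constant) and the first stays bounded by $\varepsilon^q$, so $\limsup_{n\to\infty}\sup_{i\in I}\E[\|X_i^0 - X_i^n\|_V^q] \leq \varepsilon^q$. Letting $\varepsilon \downarrow 0$ concludes the proof.

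The argument is conceptually routine once organised this way; the only genuinely delicate point is Step~1, where the a.s.-convergent subsequence supplied by Riesz depends on the index $i$. This dependence is harmless because the bound $M$ produced by Fatou is the \emph{same} for every $i$ and every admissible subsequence, so no measurable selection is required. Separability of $V$ is essential throughout, since it guarantees that the relevant scalar quantities $\|X_i^0\|_V$ and $\|X_i^0 - X_i^n\|_V$ are genuine random variables, so that ordinary expectations and probabilities (rather than the outer-measure machinery invoked in Corollary~\ref{cor:as:outer}) suffice.
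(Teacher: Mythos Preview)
Your proof is correct and follows the same overall Vitali-type template as the paper: a Fatou-type argument for the $p$-th moment of the limit $X^0$, followed by a H\"older split on the $q$-th moment of the difference. The implementation differs in two minor but noteworthy ways. First, in Step~1 the paper simply cites an external Fatou-type lemma (Lemma~3.10 in~\cite{Hutzenthaler2015}), whereas you spell out the underlying Riesz-subsequence/Fatou argument explicitly; your observation that the $i$-dependence of the subsequence is harmless because the bound $M$ is uniform is exactly the point. Second, in Step~2 the paper splits at the fixed threshold $1$ and then invokes Lemma~\ref{lem:charac} to dispose of the truncated term $\E[\min\{1,\|X_i^0-X_i^n\|_V^q\}]$, while you split at a variable threshold $\varepsilon$, bound the small part trivially by $\varepsilon^q$, and send $\varepsilon\downarrow 0$. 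Your route is slightly more self-contained here, since it bypasses the auxiliary characterisation lemma entirely; the paper's route has the minor advantage of keeping the cutoff fixed and packaging the ``convergence in probability implies convergence of truncated means'' step into a reusable lemma.
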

\begin{proof}[Proof of Proposition~\ref{prop:vitali}.]
First, observe that, e.g., Lemma~3.10 in~\cite{Hutzenthaler2015}, the assumption that $\forall \, \varepsilon \in (0, \infty) \colon \limsup_{n \to \infty} \sup_{i \in I} \P(\|X_i^0- X_i^n \|_V \geq \varepsilon)=0$,  and the assumption that $\limsup_{n \to \infty} \sup_{i \in I}$ $ \E \big[\|X_i^n\|_V^p\big] < \infty$  yield that
\begin{align}\label{eq:bound}
\sup_{i \in I} \E\big[\|X_i^0\|_V^p\big] \leq \sup_{i \in I} \liminf_{n \to \infty} \E\big[\|X_i^n\|_V^p\big]  \leq \limsup_{n \to \infty} \sup_{i \in I}  \E \big[\|X_i^n\|_V^p\big] < \infty.
\end{align}
Next note that H\"olders inequality ensures for all $q \in (0, p)$, $n \in \N$ that
\begin{align}
\begin{split}
&\sup_{i \in I} \E\big[\|X_i^0 -X_i^n\|_V^q\big]  = \sup_{i \in I} \bigg( \E\!\left[ \mathbbm{1}_{\{\|X_i^0 -X_i^n \|_V \geq 1\}}\|X_i^0 -X_i^n\|_V^q\right] + \E\!\left[ \mathbbm{1}_{\{\|X_i^0 -X_i^n \|_V < 1\}}\|X_i^0 -X_i^n\|_V^q\right] \bigg) \\
& \leq \sup_{i \in I} \Big( \big| \P(\|X_i^0 -X_i^n \|_V \geq 1)\big|^{\frac{p-q}{q}} \big( \E\big[ \|X_i^0 -X_i^n\|_V^p\big] \big)^{\frac{q}{p}} \Big) + \sup_{i \in I} \E \big[ \!\min\{1, \|X_i^0 -X_i^n\|_V^q \}\big].
\end{split}
\end{align}
The fact that $\forall \, a, b \in \R \colon |a+b|^p \leq 2^p(|a|^p + |b|^p)$ together with the triangle inequality hence shows for all $q \in (0, p)$, $n \in \N$ that
\begin{align}\label{eq:holder}
\begin{split}
&\sup_{i \in I} \E\big[\|X_i^0 -X_i^n\|_V^q\big]  \leq 2^q \sup_{i \in I} \Big( \big| \P(\|X_i^0 -X_i^n \|_V \geq 1)\big|^{\frac{p-q}{q}} \big( \E\big[ \|X_i^0 \|_V^p\big] \big)^{\frac{q}{p}} \Big)\\
&+ 2^q \sup_{i \in I} \Big( \big| \P(\|X_i^0 -X_i^n \|_V \geq 1)\big|^{\frac{p-q}{q}} \big( \E\!\left[ \|X_i^n \|_V^p\right] \big)^{\frac{q}{p}} \Big) + \sup_{i \in I} \E \big[ \!\min\{1, \|X_i^0 -X_i^n\|_V^q \}\big].
\end{split}
\end{align}
Moreover, observe that  Lemma~\ref{lem:charac} and the assumption that $ \forall \, \varepsilon \in (0, \infty) \colon \limsup_{n \to \infty} \sup_{i \in I} \P(\|X_i^0- X_i^n \|_V \geq \varepsilon)=0$  prove that  for all $q \in (0, p)$ it holds that
\begin{align}
\limsup_{n \to \infty} \sup_{i \in I} \E \big[ \!\min\{1, \|X_i^0 -X_i^n\|_V^q \}\big]=0.
\end{align}
This, \eqref{eq:holder}, \eqref{eq:bound}, the fact that $\limsup_{n \to \infty} \sup_{i \in I} \P(\|X_i^0- X_i^n \|_V \geq 1)=0$, and the assumption that $\limsup_{n \to \infty} \sup_{i \in I} \E \big[\|X_i^n\|_V^p\big] < \infty$  yield that for all $q \in (0, p)$ it holds that
\begin{align}
\limsup_{n \to \infty} \sup_{i \in I} \E\big[\|X_i^0 -X_i^n\|_V^q\big] = 0.
\end{align}
Combining this with \eqref{eq:bound} completes the proof of Proposition~\ref{prop:vitali}.
\end{proof}

%%%%%%%%%%%%%%%%%%%%%%%%%%%%%%%%%%%%%%%%%%%%%%%%%%%%%%%%%%%%%%%%%%%%%%%%%
\subsection{Main result of this article}
\label{subsec:main}
\begin{theorem}\label{theorem:main}
Let $ ( H, \left< \cdot , \cdot \right>_H, \left\| \cdot \right\|_H ) $ be a separable $\mathbb{R}$-Hilbert space,  let $ ( \Omega, \mathcal{F}, \P ) $ be a probability space,  let $ \mathbb{H} \subseteq H$ 
be a non-empty orthonormal basis of $ H $, 
let $\lambda \colon \mathbb{H}\to (0, \infty)$ be a function with the property 
that $\inf_{b \in \mathbb{H}} \lambda_b > 0$, 
let $ A \colon D(A) \subseteq H \to H $ be the linear operator 
such that $ D(A) = \{ v \in H \colon \sum_{b \in \mathbb{H}} | \lambda_b \langle b , v \rangle_H |^2 < \infty \} $ 
and such that for all $ v \in D(A) $ it holds that 
$ A v = \sum_{b \in \mathbb{H}} - \lambda_b \langle b , v \rangle_H b$, 
let $ ( H_r, \left< \cdot , \cdot \right>_{ H_r }, \left\| \cdot \right\|_{ H_r } ) $, 
$ r \in \R $, be a family of interpolation spaces associated to $ - A $ 
(see, e.g., Definition 3.5.25 in  \cite{Jentzen2015SPDElecturenotes}), 
let   $ \alpha \in [ 0, \nicefrac{1}{2}]$, $\ctwo \in [0,1)$, 
$\rho \in [0, 1- \alpha)$, $\varrho \in (\rho, 1-\alpha)$, 
$ T, \theta, \vartheta \in (0,\infty)$, 
$ 
  \chi \in 
  ( 
    0
    ,  
    \nicefrac{ 
      ( 1 - \alpha - \rho ) 
    }{
      ( 1 + 2 \cfour ) 
    }
  ]
  \cap 
  (
    0
    ,
    \nicefrac{
      ( \varrho - \rho ) 
    }{
      ( 1 + \cfour) 
    }
  ] 
$, 
$p \in [2, \infty)$, $\varXi \in \{ B \in \mathcal{F} \colon \P(B)=1 \}$, $\tilde{\Omega} \in \mathcal{P}(\Omega)$, $F \in \mathcal{C}(H_{\rho}, H_{-\alpha} )$, 
$ \phi, \Phi \in \mathcal{M}( \mathcal{B}( H_1 ) , \mathcal{B}( [0,\infty) ) ) $, 
$ ( \mathbb{H}_n )_{ n \in \N } \in \mathbbm{M}( \N, \mathcal{P}_0( \mathbb{H} ) )$, 
$ ( P_n )_{ n \in \N } \in \mathbb{M}( \N, L( H ) )$, $(h_n)_{n \in \N} \in \mathbb{M}(\N, (0, \infty) )$, 
$(\mathbb{X}^n)_{n \in \N}  \in \mathbb{M}(\N, \mathbb{M}([0,T]\times \Omega, H_{\varrho}))$, 
let  $ \mathcal{X}^n \colon [0,T] \times \Omega \to H_{ \varrho } $, $ n \in \N $, and $ \mathcal{O}^n \colon [0,T] \times \Omega \to H_{ \varrho } $, $ n \in \N $, 
be stochastic processes, 
let $ X, O \colon  [0, T] \times \Omega \to H_{ \varrho} $ be stochastic processes 
with continuous sample paths, and assume for all $ v, w \in H_1 $, $ t \in [0,T] $, 
$ n \in \N $, $\omega \in \varXi$ that $ \left< v, F( v + w ) \right>_H \leq \phi( w ) \| v \|^2_H + \ctwo \| v \|^2_{ H_{ 1 / 2 } } + \Phi( w ) $, 
$ \left\| F(v) - F(w) \right\|_{ H_{ - \alpha } } \leq \theta \, ( 1 + \| v \|_{ H_{ \rho } }^{ \vartheta } 
+ \|w\|_{H_{\rho}}^{\vartheta}) \, \|v-w\|_{H_{\rho}} $, 
$ 
  \liminf_{ m \to \infty } 
  \inf( \{\lambda_b \colon b \in \mathbb{H} \backslash \mathbb{H}_m \} \cup \{\infty\}  ) = \infty 
$, 
$
  P_n(v) = 
  \sum_{ b \in \mathbb{H}_n } \langle b, v \rangle_H b
$, 
$
  \limsup_{ m \to \infty} h_m =0 
$,
$ 
  \mathcal{O}^n( [0,T] \times \Omega )
  \subseteq
  P_n( H ) 
$,
   $\tilde{\Omega}= \{ \forall \, s \in [0,T] \colon  X_s = \int_0^s e^{ ( s - u ) A } \, F( X_u ) \, du + O_s\} \cap \{ \forall \, m \in \N, s \in  [0, T] \colon \mathbb{X}_{\lf s \rf_{h_m}}^m = \mathcal{X}_{\lf s \rf_{h_m}}^m \} \cap  \varXi$,  $ 
   \limsup_{m\to \infty} \sup_{s \in [0, T]} \| O_s(\omega) - \mathcal{O}_s^m(\omega) \|_{H_{\varrho}} =0
   $,
$ 
  \limsup_{ m \to \infty}  
  \sup_{ s \in [0,T]} 
  \E\big[
    \| \mathcal{O}_s^m \|_H^p 
    +
    \int_0^T 
    \exp\!\big( 
      \int_r^T p \, \phi( \mathcal{O}_{ \lf u \rf_{h_m} }^m ) \, du 
    \big)  
    \max\{
      1, | \Phi( \mathcal{O}_{ \lf r \rf_{h_m} }^m) |^{ p / 2 } 
    \} \, dr 
  \big] < \infty
$, $ \mathbb{X}_t^n  =  \smallint\nolimits_0^t P_n \,  e^{  ( t - s ) A } \, \mathbbm{1}_{ \{ \| \Y_{ \lf s \rf_{h_n} }^n \|_{ H_{ \varrho } } +  \| \mathcal{O}_{ \lf s \rf_{h_n} }^n \|_{ H_{ \varrho } }  \leq | h_n|^{ - \chi } \}} \, F (  \Y_{ \lf s \rf_{ h_n } }^n ) \, ds  +\mathcal{O}_t^n$, and
$ 
 \P\big( X_t = \int_0^t e^{ ( t - s ) A } \, F( X_s ) \, ds + O_t \big)=\P (\mathbb{X}_t^n  = \mathcal{X}_t^n )=1$.
Then
\begin{enumerate}[(i)]
	\item it holds that $\tilde{\Omega} \in \mathcal{F}$ and $\P(\tilde{\Omega})=1$,
	\item it holds for all $ \omega \in \tilde{\Omega}$ that $\limsup_{n \to \infty} \sup_{t \in [0,T]} \| X_t(\omega)- \mathbb{X}_t^{n}(\omega) \|_{ H_{ \varrho } } = 0$,
	\item it holds for every $n \in \N$ that $\mathbb{X}^n - \mathcal{O}^n \colon [0, T] \times \Omega \to H_{\varrho}$  is a stochastic process with continuous sample paths, 
	\item it holds that $
	\limsup_{ n \to \infty }
	\sup_{ t \in [0,T] }
	\E\big[ 
	\| X_t \|^p_H
	+
	\| \mathcal{X}^n_t \|_H^p
	+
	 \sup_{ s \in [0,T] }
	\|
	\mathbb{X}^n_s - \mathcal{O}^n_s
	\|_H^p
	\big]
	< \infty
	$, and
	\item it holds 
	for all 
	$ q \in (0, p) $
	that
	$
	\limsup_{ n \to \infty } 
	\sup_{ t \in [0,T] } 
	\E\big[
	\| X_t - \Y_t^n \|_H^q 
	\big] 
	= 0
	$.
\end{enumerate}
\end{theorem}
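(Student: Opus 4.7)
The plan is to combine Proposition~\ref{prop:pathwise}, Corollary~\ref{cor:bounds}, Corollary~\ref{cor:as:outer}, and Proposition~\ref{prop:vitali}, proving the five claims in the order (i), (iii), (ii), (iv), (v). For (i), the second defining event of $\tilde{\Omega}$ is a countable intersection over $m \in \N$ and $k \in \N_0 \cap [0, T/h_m]$ of $\P$-a.s.\ identities and is thus measurable of full measure, while the first event is handled by taking the mild-equation identity (which holds $\P$-a.s.\ at each $t$ by hypothesis), intersecting over $t \in [0,T] \cap \mathbb{Q}$ to obtain a countable full-measure event, and extending to all $t$ by sample-path continuity of $X$ and $O$ together with continuity in $t$ of the deterministic right-hand side (cf.\ Lemma~\ref{lemma:cont}). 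For (iii), measurability of $\mathbb{X}^n - \mathcal{O}^n$ is Lemma~\ref{lemma:meas}, and continuity of its sample paths follows from Lemma~\ref{lem:YX_in_DA} applied to its integral representation with the truncated drift.

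For (ii) I argue pathwise on $\tilde{\Omega}$ by applying Proposition~\ref{prop:pathwise} with $V = H_\varrho$, $W = H_{-\alpha}$, $S_t = e^{tA}$, and the Section~\ref{setting:pathwise}-exponent taken to be $\alpha + \varrho \in [0,1)$. The bound $\sup_n \|P_n\|_{L(H_\varrho)} \leq 1$ is immediate since $P_n$ is a spectral projection commuting with $A$; the smoothing estimate $\int_0^T \|(\Id - P_n) e^{sA}\|_{L(H_{-\alpha}, H_\varrho)}\,ds \to 0$ follows from $\inf\{\lambda_b : b \in \mathbb{H} \setminus \mathbb{H}_n\} \to \infty$ via dominated convergence, since the integrand is bounded by $s^{-\alpha-\varrho} \sup_{b \notin \mathbb{H}_n} (\lambda_b s)^{\alpha + \varrho} e^{-\lambda_b s}$ with integrable majorant $s \mapsto s^{-\alpha - \varrho}$ on $(0, T]$. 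The remaining hypothesis $\sup_{s \in [0,T]} \|O_s - \mathcal{O}^n_s\|_{H_\varrho} \to 0$ is assumed on $\tilde{\Omega}$, and finiteness of the local Lipschitz modulus $\Psi$ of Section~\ref{setting:pathwise} follows from the theorem's Lipschitz bound on $F \colon H_\rho \to H_{-\alpha}$ composed with the continuous embedding $H_\varrho \hookrightarrow H_\rho$.

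For (iv) I first apply Lemma~\ref{lemma:cond} with $V = H_\varrho$, $\mathcal{V} = H_\rho$, $W = H_{-\alpha}$, $\mathcal{W} = H_{-1/2}$ (the embeddings $V \subseteq \mathcal{V}$ and $W \subseteq \mathcal{W}$ hold since $\varrho > \rho$ and $\alpha \leq \nicefrac{1}{2}$) to recast the theorem's Lipschitz assumption on $F$ into the polynomial form required by Corollary~\ref{cor:bounds}, with resulting exponent $2\vartheta$; the theorem's range $\chi \in (0, (1-\alpha-\rho)/(1+2\vartheta)]$ then lies inside the Corollary's hypothesis $\chi \in [0, (1-\alpha-\rho)/(1+\vartheta)]$. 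Corollary~\ref{cor:bounds} thereby yields $\sup_n \E[\sup_{s \in [0,T]} \|\mathbb{X}^n_s - \mathcal{O}^n_s\|_H^p] < \infty$ using the assumed uniform bound on the integral functional of $\mathcal{O}^n$. Combining with the triangle inequality, the $\P$-a.s.\ identity $\mathcal{X}^n_t = \mathbb{X}^n_t$, and the $L^p$-bound on $\mathcal{O}^n$ yields the uniform $L^p$-bound on $\|\mathcal{X}^n_t\|_H$; the bound on $\|X_t\|_H$ then follows by Fatou's lemma applied to the pathwise convergence of (ii).

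Finally, for (v), the continuous embedding $H_\varrho \hookrightarrow H$ together with (ii) gives $\limsup_n \sup_{t \in [0,T]} \|X_t(\omega) - \mathbb{X}^n_t(\omega)\|_H = 0$ for every $\omega \in \tilde{\Omega}$, and Corollary~\ref{cor:as:outer}, applied in the random-field framework (which bypasses measurability of the uncountable pathwise supremum), upgrades this to weakly uniform convergence in probability, whence Proposition~\ref{prop:vitali} combined with (iv) delivers the $L^q$-convergence uniformly in $t$ for every $q \in (0, p)$. The main technical obstacle is the simultaneous bookkeeping of spaces and exponents when interfacing Lemma~\ref{lemma:cond}, Corollary~\ref{cor:bounds}, and Proposition~\ref{prop:pathwise}: the parameters $\alpha$, $\rho$, $\varrho$, $\vartheta$, $\chi$ must all be mutually compatible with the coercivity level $H_{1/2}$, the Lipschitz target $H_{-\alpha}$, the truncation threshold $|h_n|^{-\chi}$, and the semigroup smoothing on $(\Id - P_n) e^{sA}$.
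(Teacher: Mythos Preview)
Your proposal is correct and follows essentially the same route as the paper: items (i) and (iii) via continuity and Lemmas~\ref{lem:YX_in_DA}/\ref{lemma:meas}; item (ii) via Proposition~\ref{prop:pathwise} applied pathwise with $V=H_\varrho$, $W=H_{-\alpha}$; item (iv) via Lemma~\ref{lemma:cond} (yielding exponent $\tilde\vartheta=2\vartheta$) combined with Corollary~\ref{cor:bounds}; and item (v) via Corollary~\ref{cor:as:outer} and Proposition~\ref{prop:vitali}. The only point where the paper is more explicit than your sketch is the passage from $\mathbb{X}^n$ (whose defining integral involves $\mathcal{X}^n$, not $\mathbb{X}^n$ itself) to a process satisfying the \emph{self-referential} recursion required by Corollary~\ref{cor:bounds} and by the setting of Section~\ref{setting:pathwise}: the paper introduces auxiliary processes $\tilde{\mathcal{X}}^n,\tilde{\mathcal{O}}^n$ that coincide with $\mathbb{X}^n,\mathcal{O}^n$ on $\tilde\Omega$ but satisfy the recursion for every $\omega$, and also restricts to $P_n(H)$ so that $A$ becomes a bounded diagonal operator as Corollary~\ref{cor:bounds} demands; you should make both of these steps explicit when writing out the argument. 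Apart from that, your parameter bookkeeping is right, including the observation that the two hypotheses on $\chi$ are exactly what force the $h_n$-exponent in Corollary~\ref{cor:bounds} to be nonnegative.
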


\begin{proof}[Proof of Theorem~\ref{theorem:main}.]
Throughout this proof let $\tilde{X}, \tilde{O} \colon [0,T] \times \Omega \to H_{\varrho}$ be the mappings with the property that for all $t \in [0, T]$ it holds that $\tilde{X}_t= \mathbbm{1}_{\tilde{\Omega}} \, X_t$ and $\tilde{O}_t= \mathbbm{1}_{\tilde{\Omega}} \, O_t - \mathbbm{1}_{\Omega \backslash \tilde{\Omega}} \, \int_0^t e^{(t-s)A} \, F(0) \, ds$, let $(\tilde{\Y}^n)_{n \in \N}$, $(\tilde{\mathcal{O}}^n)_{n \in \N} \in \mathbb{M}(\N, \mathbb{M}([0,T]\times \Omega, H_{\varrho}))$ be the mappings with the property that for all $n \in \N$, $t \in [0, T]$ it holds that $\tilde{\mathcal{O}}_t^n = \mathbbm{1}_{\tilde{\Omega}} \, \mathcal{O}_t^n  + \mathbbm{1}_{\Omega \backslash \tilde{\Omega}} \,P_n \, \tilde{O}_t$ and  
\begin{align}
\tilde{\Y}_t^n  =  \smallint_0^t P_n \,  e^{  ( t - s ) A } \, \mathbbm{1}_{ \{ \|\tilde{\Y}_{ \lf s \rf_{h_n} }^n \|_{ H_{ \varrho } } +  \| \tilde{\mathcal{O}}_{ \lf s \rf_{h_n} }^n \|_{ H_{ \varrho } }  \leq | h_n|^{ - \chi } \}} \, F (  \tilde{\Y}_{ \lf s \rf_{ h_n } }^n ) \, ds  +\tilde{\mathcal{O}}_t^n,
\end{align} 
let
$
\tilde{\vartheta}, \tilde{\theta} \in (0,\infty)
$ 
be the real numbers given by $ \tilde{\vartheta} = 2 \vartheta $ 
and 
\begin{multline}
\tilde{\theta} = 
\max\!\bigg\{ 
3 \, \theta^2  
\bigg[
\sup_{ u \in H_{ - \alpha } \backslash\{ 0 \} 
}
\tfrac{ 
	\| u \|_{ H_{ - 1 / 2 } }^2 
}{
\| u \|_{ H_{ - \alpha } }^2 
} 
\bigg] 
\bigg[
1 + 
\sup_{ u \in H_{ \varrho } \backslash \{ 0 \} 
}
\tfrac{
	\| u \|_{ H_{ \rho } }^{ 2 \vartheta } 
}{
\| u \|_{H_{\varrho}}^{2 \vartheta}} 
\bigg]
\big(1+2^{\max\{2\vartheta-1, 0\}}\big) 
,  
\\
\big(
8 \theta^2 + 
2 \| F(0) \|_{ H_{ - \alpha } }^2 
\big) 
\max\!\bigg\{ 
1, 
\sup_{
	u \in H_{ \varrho } \backslash \{ 0 \} 
}
\tfrac{ 
	\| u \|_{ H_{ \rho } }^{ 2 + 2 \vartheta } 
}{
\| u \|_{ H_{ \varrho } }^{ 2 + 2 \vartheta } 
} 
\bigg\} 
\! \bigg\},
\end{multline}
and let 
$ 
\tilde{\phi}_n \colon 
P_n( H ) \to [0,\infty) 
$,
$ n \in \N $,
and 
$ 
\tilde{\Phi}_n \colon
P_n( H ) \to [0,\infty) 
$,
$ n \in \N $,
be the mappings with the property that
for all 
$ n \in \N $,
$ v \in P_n( H ) $ 
it holds that
$
\tilde{\phi}_n( v ) = 2 \cdot \phi( v )
$
and 
$
\tilde{\Phi}_n( v ) = 2 \cdot \Phi( v )
$. Next observe that the assumption that $X, O \colon [0, T] \times \Omega \to H_{\varrho}$ are stochastic processes with continuous sample paths and the assumption that $\forall \, t \in [0, T] \colon  \P\big(X_t = \int_0^t e^{ ( t - s ) A } \, F( X_s ) \, ds + O_t\big)=1$ ensure that $\{ \forall \, t \in [0,T] \colon  X_t = \int_0^t e^{ ( t - s ) A } \, F( X_s ) \, ds + O_t\} \in \mathcal{F}$ and $\P\big(\forall \, t \in [0,T] \colon  X_t = \int_0^t e^{ ( t - s ) A } \, F( X_s ) \, ds + O_t\big)=1$. The assumption that $\forall \, n\in \N$,  $t \in [0, T] \colon \P(\mathbb{X}_t^n = \mathcal{X}_t^n)=1$ and the assumption that $\P(\varXi)=1$ hence yield that $\tilde{\Omega} \in \mathcal{F}$ and $\P(\tilde{\Omega})=1$. This establishes (i). In the next step we observe that, e.g., Theorem~4.7.6 
in \cite{Jentzen2015SPDElecturenotes} proves that
for all 
$ n \in \N $, $ t \in [0, T] $, $ \varepsilon \in (0, 1 - \varrho - \alpha) $ 
it holds that
\begin{align}
\label{eq:sup_s_alpha_finite}
  \sup\nolimits_{ s \in (0, T] } 
  \big( 
    s^{ ( \varrho + \alpha ) } 
    \| e^{ s A } \|_{ L( H_{ - \alpha }, H_{ \varrho } ) } 
  \big) 
  = 
  \sup\nolimits_{ s \in (0, T] }   
  \|
    ( - s A )^{ ( \varrho + \alpha ) } 
    \, e^{ s A } 
  \|_{ L(H) } 
  \leq 1 
  < \infty
\end{align}
and
\begin{align}
\begin{split}
& 
  \int_0^t \|   
    ( \Id_H - P_n ) 
    e^{ s A }   
  \|_{
    L( H_{ - \alpha } , H_{ \varrho } ) 
  }  
  \, ds 
\leq 
  \int_0^t 
  \|
    \Id_H - P_n 
  \|_{
    L( H_{ \varrho + \varepsilon } , H_{ \varrho } ) 
  } 
  \,
  \|  
    e^{ s A } 
  \|_{
    L( H_{ - \alpha }, H_{ \varrho + \varepsilon } ) 
  } \, ds  
\\ & = 
  \| 
    ( - A )^{ - \varepsilon }
    ( \Id_H - P_n )
  \|_{
    L( H )
  }
  \int_0^t  
  \|
    ( - A )^{ ( \varrho + \varepsilon + \alpha ) }  
    \,
    e^{ s A } 
  \|_{ L(H) } 
  \, ds  
\\ & \leq 
  \| 
    ( - A )^{ - 1 }
    ( \Id_H - P_n )
  \|_{
    L( H )
  }^{ \varepsilon }
  \int_0^t s^{ - ( \varrho + \varepsilon + \alpha ) } \, ds 
=  
  \frac{ 
    \| 
      ( - A )^{ - 1 }
      ( \Id_H - P_n )
    \|_{
      L( H )
    }^{ \varepsilon }
    \, 
    t^{ ( 1 - \varrho - \varepsilon - \alpha ) }
  }{
    ( 1 - \varrho - \alpha - \varepsilon ) 
  }
  .
\end{split}
\end{align}
This together with the assumption that 
$
  \liminf_{ n \to \infty} \inf( \{\lambda_b \colon b \in \mathbb{H} \backslash \mathbb{H}_n \} \cup \{\infty\} ) = \infty
$ 
ensures that
\begin{align}\label{eq:sup_Id_H_0} 
  \limsup_{ n \to \infty } 
  \sup_{ t \in [0, T] } 
  \left( 
    \int_0^t 
    \| 
      ( \Id_H - P_n ) 
      e^{ s A }
    \|_{ L( H_{ - \alpha } , H_{ \varrho } ) } 
  \, ds 
  \right) = 0 
  .
\end{align} 
In addition, the assumption that $ \forall \, \omega \in \varXi \colon \limsup_{n\to \infty} \sup_{t \in [0, T]} \| O_t(\omega) - \mathcal{O}_t^n(\omega) \|_{H_{\varrho}} = 0$ and the fact that $\tilde{\Omega} \subseteq \varXi$ imply that $\limsup_{n\to \infty} \sup_{t \in [0, T]} \| \tilde{O}_t - \tilde{\mathcal{O}}_t^n \|_{H_{\varrho}} = 0$.
Combining this with \eqref{eq:sup_s_alpha_finite}, \eqref{eq:sup_Id_H_0}, the fact that $\forall \, t \in [0,T] \colon  \tilde{X}_t = \int_0^t e^{ ( t - s ) A } \, F( \tilde{X}_s ) \, ds + \tilde{O}_t$,
and the fact that 
$
  \limsup_{ n \to \infty } 
  \| P_n \|_{ L( H_{ \varrho } ) } = 1 < \infty
$ 
allows us to apply
Proposition~\ref{prop:pathwise} 
(with $ V = H_{ \varrho } $, 
$ W = H_{ - \alpha } $, $ T = T $, 
$ \chi = \chi $, $ \alpha = \varrho + \alpha $, 
$ P_n = ( H_{ \varrho } \ni v \mapsto P_n( v ) \in H_{ \varrho } ) $, 
$ h_n = h_n $, 
$ F = F|_{ H_{ \varrho } } \in \mathcal{C} (H_{\varrho}, H_{-\alpha}) $,  
$ S_t = ( H_{ - \alpha } \ni v \mapsto e^{ t A } v \in H_{ \varrho } ) $ 
for 
$ t \in (0,T] $, 
$ n \in \N $ 
in the notation of Proposition \ref{prop:pathwise}) 
to obtain that 
$
  \limsup_{n \to \infty} \sup_{t \in [0, T]} \| \tilde{X}_t- \tilde{\Y}_t^n \|_{H_\varrho}=0
$. This together with the fact that $\forall \, \omega \in \tilde{\Omega}$, $t \in [0, T]$, $n \in \N \colon \tilde{X}_t(\omega)- \tilde{\Y}_t^n(\omega) = X_t(\omega) - \mathbb{X}_t^n(\omega)$ proves (ii). In the next step note that Lemma~\ref{lem:YX_in_DA} yields for every $n \in \N$ that $\mathbb{X}^n- \mathcal{O}^n \colon [0, T] \times \Omega \to H_{\varrho}$ is a stochastic process with continuous sample paths. This establishes (iii).
Next observe that Lemma~\ref{lemma:cond} implies 
that for all $v, w \in H_{\varrho}$ it holds that 
\begin{align}
\label{eq:F_condition_proof_1}
  \| 
    F(v) - F(w) 
  \|_{ 
    H_{ - 1 / 2 } 
  }^2 
  \leq 
  \tilde{\theta} \max\{1, \|v\|_{H_{\varrho}}^{\tilde{\vartheta}}\}\|v-w\|_{H_{\rho}}^2 + \tilde{\theta} \, \|v-w\|_{H_{\rho}}^{2+\tilde{\vartheta}}
\end{align}
and 
\begin{equation}
\label{eq:F_condition_proof_2}
  \|F(v)\|_{H_{-\alpha}}^2 \leq \tilde{\theta} \max\{ 1, \|v\|_{H_{\varrho}}^{2 + \tilde{\vartheta}} 
  \}
  .
\end{equation}
In addition, observe that the assumption that
$
  \chi \in
  ( 
    0
    ,  
    \nicefrac{ 
      ( 1 - \alpha - \rho ) 
    }{
      ( 1 + 2 \cfour ) 
    }
  ]
$,
in particular,
assures that
$
  \chi 
  \in 
  \big[ 
    0,
    ( 2 - 2 \alpha - 2 \rho ) / ( 2 + \tilde{\cfour} )
  \big]
$.
Combining this with 
\eqref{eq:F_condition_proof_1}
and \eqref{eq:F_condition_proof_2}
enables us to apply 
Corollary~\ref{cor:bounds}
(with 
$ H = P_n( H ) $, 
$ (\Omega, \mathcal{F}, \P) = (\Omega, \mathcal{F}, \P) $, 
$ A = ( P_n( H ) \ni v \mapsto A v \in P_n( H ) ) \in L( P_n( H ) ) $, 
$ 
  O = ( [0,T] \times \Omega \ni (t,\omega) \mapsto \tilde{\mathcal{O}}^n_t( \omega ) \in P_n( H ) ) 
$, 
$ Y = ( [0,T] \times \Omega \ni (t,\omega) \mapsto \tilde{\mathcal{X}}^n_t( \omega ) \in P_n( H ) ) $, 
$ 
  F = 
  ( P_n( H ) \ni v \mapsto P_n F( v ) \in P_n(H) ) 
  \in 
  \mathcal{C}( P_n( H ), P_n( H ) ) 
$,  
$ \phi = \tilde{\phi}_n $, 
$ \Phi = \tilde{\Phi}_n $, 
$ \ctwo= \ctwo$, $\alpha = \alpha$, $\rho= \rho$, $ \varrho = \varrho$,  
$ \cthree= \tilde{\theta}$, $\cfour = \tilde{\vartheta}$, $\chi = \chi $, 
$T =T$, $ h = h_n$, $p= p$ for $ n \in \{ m \in \N \colon h_m \leq 1 \}$ 
in the notation of Corollary \ref{cor:bounds}) 
to obtain that for all $ n \in \N  $
with $ h_n \leq 1 $ it holds that
\begin{equation}
\label{eq:cor_application}
\begin{split}
& 
  \big\| 
    \sup\nolimits_{ t \in [0,T] } 
    \| \tilde{\mathcal{X}}^n_t - \tilde{\mathcal{O}}^n_t \|_H 
  \big\|_{ \mathcal{L}^p( \P ; \R ) } 
\\
& 
  \leq
  \sqrt{
  \left[
    1
    +
    \tfrac{  
      \tilde{\cthree} 
      \,
      ( 1 + | \tilde{\cthree} |^{ 1 / 2 } )^{ 2 + \tilde{\cfour} } 
      \,   
      | h_n |^{ \min\{ 2 \varrho, 2 - 2 \alpha - \tilde{\cfour} \chi \} 
      - 2 \rho 
      - ( 2 + \tilde{\cfour} ) \chi }  
    }{
      ( 1 / 2 - \ctwo / 2 ) \, 
      ( 1 - \alpha - \rho )^{ 2 + \tilde{\cfour} }
    }
  \right]
  \smallint_0^T
    \left|  
      \E\!\left[
        e^{ \frac{ p }{ 2 } \int_s^T \tilde{\phi}_n( \tilde{\mathcal{O}}^n_{ \lf u \rf_{ h_n } } ) \, du }
        \max\!\big\{
          1 , | \tilde{\Phi}_n( \tilde{\mathcal{O}}^n_{ \lf s \rf_{ h_n } } ) |^{ \nicefrac{ p }{ 2 } }  
        \big\} \right]
      \right|^{ \frac{ 2 }{ p } }
    ds
  }
\\
& 
  \leq
  \sqrt{
  2
  \left[
    1
    +
    \tfrac{  
      \tilde{\cthree} 
      \,
      ( 1 + | \tilde{\cthree} |^{ 1 / 2 } )^{ 2 + 2 \cfour } 
      \,   
      | h_n |^{ 
        2 [ \min\{ \varrho, 1 - \alpha - \cfour \chi \} 
        - \rho 
        - ( 1 + \cfour ) \chi 
        ]
      }  
    }{
      ( 1 / 2 - \ctwo / 2 ) \, 
      ( 1 - \alpha - \rho )^{ 2 + 2 \cfour }
    }
  \right]
  \smallint_0^T
    \left|  
      \E\!\left[
        e^{ p \int_s^T \phi( \tilde{\mathcal{O}}^n_{ \lf u \rf_{ h_n } } ) \, du }
        \max\!\big\{
          1 , | \Phi( \tilde{\mathcal{O}}^n_{ \lf s \rf_{ h_n } } ) |^{ \nicefrac{ p }{ 2 } }  
        \big\} \right]
      \right|^{ \frac{ 2 }{ p } }
    ds
  }
  .
\end{split}
\end{equation}
Next observe that the assumption that 
$
  \chi \in 
  ( 
    0
    ,  
    \nicefrac{ 
      ( 1 - \alpha - \rho ) 
    }{
      ( 1 + 2 \cfour ) 
    }
  ]
  \cap 
  (
    0
    ,
    \nicefrac{
      ( \varrho - \rho ) 
    }{
      ( 1 + \cfour) 
    }
  ] 
$
ensures that
\begin{equation}
  ( 1 - \alpha - \rho ) 
  -
  ( 1 + 2 \cfour ) 
  \chi
  \geq 0
\qquad
  \text{and}
\qquad  
  ( \varrho - \rho ) 
  -
  ( 1 + \cfour) 
  \chi
  \geq 0
  .
\end{equation}
This, in turn, proves that
\begin{equation}
  \min\!\big\{ 
      \varrho 
    ,
      1 - \alpha - \vartheta \chi
  \big\}
  -
  \rho 
  -
  ( 1 + \cfour ) 
  \chi
  =
  \min\!\big\{ 
    ( \varrho - \rho ) 
    -
    ( 1 + \cfour) 
    \chi
  ,
    ( 1 - \alpha - \rho ) 
    -
    ( 1 + 2 \cfour ) 
    \chi
  \big\}
  \geq 0
  .
\end{equation}
Hence, we obtain for all $ n \in \N $ 
with $ h_n \leq 1 $ that
$
      | h_n |^{ 
        2 [ \min\{ \varrho, 1 - \alpha - \cfour \chi \} 
        - \rho 
        - ( 1 + \cfour ) \chi 
        ]
      }  
      \leq 1
$.
Combining this with \eqref{eq:cor_application}
proves that
for all $ n \in \N $
with $ h_n \leq 1 $ it holds that
\begin{equation}
\begin{split}
& 
  \big\| 
    \sup\nolimits_{ t \in [0,T] } 
    \| \tilde{\mathcal{X}}^n_t - \tilde{\mathcal{O}}^n_t \|_H 
  \big\|_{ \mathcal{L}^p( \P ; \R ) } 
\\
& 
  \leq
  \sqrt{
  2
  \left[
    1
    +
    \tfrac{  
      \tilde{\cthree} 
      \,
      ( 1 + | \tilde{\cthree} |^{ 1 / 2 } )^{ 2 + 2 \cfour } 
    }{
      ( 1 / 2 - \ctwo / 2 ) \, 
      ( 1 - \alpha - \rho )^{ 2 + 2 \cfour }
    }
  \right]
  \int_0^T
    \left|  
      \E\!\left[
        e^{ p \int_s^T \phi( \tilde{\mathcal{O}}^n_{ \lf u \rf_{ h_n } } ) \, du }
        \max\!\big\{
          1 , | \Phi( \tilde{\mathcal{O}}^n_{ \lf s \rf_{ h_n } } ) |^{ \nicefrac{ p }{ 2 } }  
        \big\} \right]
      \right|^{ 2 / p }
    ds
  }
  .
\end{split}
\end{equation}
The fact that $\forall \, \omega \in \tilde{\Omega},$ $t\in [0,T]$, $n \in \N \colon  \tilde{\mathcal{X}}^n_t(\omega) = \mathbb{X}^n_t(\omega), \tilde{\mathcal{O}}^n_t(\omega)  =   \mathcal{O}_t^n(\omega)$, (i), and (iii) hence yield that for all $ n \in \N $
with $ h_n \leq 1 $ it holds that 
\begin{equation}
\begin{split}
& 
\left\| 
\sup\nolimits_{ t \in [0,T] } 
\| \mathbb{X}^n_t - \mathcal{O}^n_t \|_H 
\right\|_{ \mathcal{L}^p( \P ; \R ) } 
\\
& 
\leq
\sqrt{
	2
	\left[
	1
	+
	\tfrac{  
		\tilde{\cthree} 
		\,
		( 1 + | \tilde{\cthree} |^{ 1 / 2 } )^{ 2 + 2 \cfour } 
	}{
	( 1 / 2 - \ctwo / 2 ) \, 
	( 1 - \alpha - \rho )^{ 2 + 2 \cfour }
}
\right]
\int_0^T
\left|  
\E\!\left[
e^{ p \int_s^T \phi( \mathcal{O}^n_{ \lf u \rf_{ h_n } } ) \, du }
\max\!\big\{
1 , | \Phi( \mathcal{O}^n_{ \lf s \rf_{ h_n } } ) |^{ \nicefrac{ p }{ 2 } }  
\big\} \right]
\right|^{ 2 / p }
ds
} \\
& \leq
\sqrt{
	2
	\left[
	1
	+
	\tfrac{  
		\tilde{\cthree} 
		\,
		( 1 + | \tilde{\cthree} |^{ 1 / 2 } )^{ 2 + 2 \cfour } 
	}{
	( 1 / 2 - \ctwo / 2 ) \, 
	( 1 - \alpha - \rho )^{ 2 + 2 \cfour }
}
\right] \left( T + 
\E\!\left[ \int_0^T
e^{ p \int_s^T \phi( \mathcal{O}^n_{ \lf u \rf_{ h_n } } ) \, du }
\max\!\big\{
1 , | \Phi( \mathcal{O}^n_{ \lf s \rf_{ h_n } } ) |^{ \nicefrac{ p }{ 2 } }  
\big\} \, ds \right]
 \right) }
.
\end{split}
\end{equation}
Combining this with the assumption that $ \limsup_{ n \to \infty} h_n = 0 $ implies that
\begin{equation}
\label{eq:a_priori_difference}
  \limsup\nolimits_{ n \to \infty }
  \big\| 
    \sup\nolimits_{ t \in [0,T] } 
    \| \mathbb{X}^n_t - \mathcal{O}^n_t \|_H 
  \big\|_{ \mathcal{L}^p( \P ; \R ) } 
  < \infty
  .
\end{equation}
This, 
the assumption that 
$
  \limsup_{ n \to \infty } 
  \sup_{ s \in [0,T] } 
  \E\big[
    \| \mathcal{O}_s^n \|_H^p 
  \big] < \infty
$,
and the triangle inequality
assure that
\begin{equation}
\label{eq:a_priori_X} 
  \limsup\nolimits_{ n \to \infty }
  \sup\nolimits_{ t \in [0,T] } 
  \E\big[ 
    \| \mathbb{X}^n_t \|_H^p
  \big]
  < \infty
  .
\end{equation}
Next note that (ii) and the fact $H_{\varrho} \subseteq H$ continuously ensure that for all $\omega \in \tilde{\Omega}$ it holds that $\limsup_{n \to \infty} \sup_{t \in [0,T]} \|X_t(\omega) - \mathbb{X}_t^n(\omega)\|_H =0$. Combining this with (i) allows us to apply Corollary~\ref{cor:as:outer} to obtain that for all $\varepsilon \in (0, \infty)$ it holds that $\limsup_{n \to \infty} \sup_{t \in [0, T]} \P(\|X_t - \mathbb{X}_t^n\|_H \geq \varepsilon)=0$. Proposition~\ref{prop:vitali} together with \eqref{eq:a_priori_X} hence ensures that for all $q \in (0, p)$ it holds that 
$
  \sup_{ t \in [0,T] }
  \E\big[ 
    \| X_t \|^p_H
  \big]
  < \infty
$
and
$
  \limsup_{ n \to \infty } 
  \sup_{ t \in [0,T] } 
  \E\big[
    \| X_t - \mathbb{X}_t^n \|_H^q 
  \big] 
  = 0
$.
Combining this with 
\eqref{eq:a_priori_difference},
\eqref{eq:a_priori_X}, and the assumption that $ \forall \, n \in \N$, $t \in [0,T] \colon \P(\mathcal{X}_t^n= \mathbb{X}_t^n)=1$ establishes (iv) and (v). The proof of Theorem~\ref{theorem:main} is thus completed.
\end{proof}

\section{Stochastic Kuramoto-Sivashinsky equations}
\label{sec:KS}

In this section we establish a few elementary results which, in particular, demonstrate that Theorem~\ref{theorem:main} can be applied to the stochastic K-S equation~\eqref{eq:intro_SPDE}.

\subsection{Setting}\label{setting:kura}
Let $(H, \langle \cdot , \cdot \rangle_H, \left\| \cdot \right\|_H) = (L^2(\lambda_{(0,1)}; \R), \langle \cdot , \cdot \rangle_{L^2(\lambda_{(0,1)}; \R)}, \left\| \cdot \right\|_{L^2(\lambda_{(0,1)}; \R)} )$, $ \beta \in (\nicefrac{1}{8}, \nicefrac{1}{2})$,  $T, \vt \in (0, \infty)$, $\kappa \in \R$, $\varrho \in (\nicefrac{1}{16}, \nicefrac{\beta}{2}) $, $ \chi \in    (0, \nicefrac{\varrho}{2}- \nicefrac{1}{32}]$, $\xi \in H_{1/4} $,  $(e_k)_{k \in \mathbb{Z}} \in \mathbb{M}(\mathbb{Z}, H)$, $(\lambda_k)_{k \in \mathbb{Z}}$, $(b_k)_{k \in \mathbb{Z}}$, $(\tilde{b}_k)_{k \in \mathbb{Z}} \in \mathbb{M}(\mathbb{Z}, \R)$ satisfy for all  $n \in \N$, $k \in \mathbb{Z}$ that $e_0= [(1)_{x \in (0,1)}]_{\lambda_{(0,1)}, \mathcal{B}(\R)}$, $ e_n = [(\sqrt{2}\cos(2 n \pi x))_{x \in (0,1)}]_{\lambda_{(0,1)} , \mathcal{B}(\R)}$, $ e_{-n} = [(\sqrt{2}\sin(2 n \pi x))_{x \in (0,1)}]_{\lambda_{(0,1)} , \mathcal{B}(\R)}$, $ \lambda_k = 16 k^4 \pi^4 - 4k^2 \pi^2 +\vt $, and $\sum_{m \in \mathbb{Z}} (|b_{m}|^2 + |\tilde{b}_{m}|^2 ) \, |m|^{4\beta -4}< \infty$,   let $ A \colon D(A) \subseteq H \to H $ be the linear operator such that  $ D(A) = \{ v \in H \colon \sum_{k \in \mathbb{Z}} |\lambda_k \langle e_k , v \rangle_H |^2 < \infty \}$ and such that for all  $ v \in D(A)$ it holds that $A v= \sum_{k \in \mathbb{Z}} - \lambda_k \langle e_k , v \rangle_H e_k$,  let $ ( H_r, \left< \cdot , \cdot \right>_{ H_r}, \left\| \cdot \right\|_{ H_r } ) $, $ r \in \R $, be a family of interpolation spaces associated to $ - A $ (see, e.g., Definition~3.5.25 in  \cite{Jentzen2015SPDElecturenotes}), let $(\Omega, \mathcal{F}, \P)$ be a probability space with a normal filtration $(\mathcal{F}_{t})_{t \in [0,T]}$,  let  $F \in \mathbb{M}(H_{1/16}, H_{-1/4})$, $B \in \mathbb{M}(H, H_{-1})$,  $(h_n)_{n \in \N} \in \mathbb{M}(\N, (0, \infty))$, $(P_n)_{n \in \N} \in \mathbb{M}(\N, L(H))$ satisfy for all $v \in H_{1/16}$, $n \in \N$ that $F(v)= \vt v - \frac{\kappa}{2}(v^2)'$, $B  v= \sum_{k \in \mathbb{Z}} (b_k \langle e_k, v \rangle_H + \tilde{b}_k \langle e_{-k}, v \rangle_H) \, e_k$, $\limsup_{k \to \infty} h_k=0$, $P_n(v)=\sum_{k =-n}^{n} \langle e_k, v \rangle_H e_k$, let $(\und{\cdot}) \colon\ \{ [v]_{\lambda_{(0,1)}, \mathcal{B}(\R)} \in L^0(\lambda_{(0,1)}; \R) \colon v \in \mathcal{C}( (0,1), \R ) \} \to \mathcal{C}((0,1), \R)$ be the function with the property that for all $v \in \mathcal{C}( (0,1), \R ) $ it holds that $\und{[v]_{\lambda_{(0,1)}, \mathcal{B}(\R)}}=v$, let $(W_t)_{t \in [0, T]}$ be an $\Id_H$-cylindrical $(\mathcal{F}_{t})_{t \in [0,T]}$-Wiener process, let $\Y^n \colon [0, T] \times \Omega \to P_n(H)$, $n \in \N$, and $ \mathcal{O}^n \colon [0, T] \times \Omega \to P_n(H)$, $ n \in \N$, be  stochastic processes, and assume  for all $n \in \N$, $t \in [0,T]$  that $\left[\mathcal{O}_t^n \right]_{\P, \mathcal{B}(H)} = \int_0^t P_n \, e^{(t-s)A}  \,B \, dW_s$ and 
\begin{equation}\label{eq:kura_X}
\P \Big( \Y_t^n = P_n \, e^{ t A } \, \xi + \smallint_0^t P_n \,  e^{  ( t - s ) A } \, \mathbbm{1}_{ \{ \| \Y_{ \lf s \rf_{h_n} }^n \|_{ H_{\varrho} } + \| \mathcal{O}_{ \lf s \rf_{h_n} }^n +P_n  e^{ \lf s \rf_{ h_n } A } \xi \|_{ H_{\varrho} } \leq | h_n|^{ - \chi } \}} \, F \big(  \Y_{ \lf s \rf_{ h_n } }^n \big) \, ds + \mathcal{O}_t^n \Big)=1. 
\end{equation}

\subsection{Properties of the nonlinearity}

In Lemma~\ref{coer:kura} and Lemma~\ref{nonlin:kura} below we demonstrate that the function $F$ in Section~\ref{setting:kura} fulfills the hypotheses of Theorem~\ref{theorem:main} above. Our proofs of Lemma~\ref{coer:kura} and Lemma~\ref{nonlin:kura} use the following well-known lemma.

\begin{lemma}\label{kura_norms}
Assume the setting in Section~\ref{setting:kura} and let $v \in H_{1}$. Then $\|v'\|_H \leq 2^{\nicefrac{1}{4}}\|v\|_{H_{1/4}}$ and $ \|v\|_{H_{1/4}}^2 \leq \|v\|_H \|v\|_{H_{1/2}}$.
\end{lemma}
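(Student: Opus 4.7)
The plan is to work with the Fourier expansion of $v$ in the orthonormal basis $(e_k)_{k \in \Z}$ of $H$ and reduce both inequalities to pointwise coefficient estimates. Write $v_k := \langle e_k, v \rangle_H$ so that $v = \sum_{k \in \Z} v_k e_k$ in $H_1$. Since the $e_k$ diagonalise $-A$ with eigenvalues $\lambda_k = 16 k^4 \pi^4 - 4 k^2 \pi^2 + \vt$, the definition of the interpolation spaces gives $\|v\|_{H_r}^2 = \sum_{k \in \Z} \lambda_k^{2r} |v_k|^2$ for every $r \in \R$.

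For the first inequality, I would differentiate the Fourier series term by term, using $e_n' = -2 n \pi \, e_{-n}$, $e_{-n}' = 2 n \pi \, e_n$ for $n \geq 1$ and $e_0' = 0$, and then invoke Parseval's identity to obtain $\|v'\|_H^2 = \sum_{k \in \Z} 4 k^2 \pi^2 |v_k|^2$. The target bound $\|v'\|_H^2 \leq \sqrt{2}\, \|v\|_{H_{1/4}}^2$ then reduces to the coefficientwise inequality $(4 k^2 \pi^2)^2 \leq 2 \lambda_k$ for every $k \in \Z$, which upon rearrangement becomes $16 k^4 \pi^4 - 8 k^2 \pi^2 + 2 \vt \geq 0$; this is immediate for $k = 0$ from $\vt > 0$ and for $|k| \geq 1$ from the fact that $2 \pi^2 > 1$ forces $16 k^4 \pi^4 \geq 8 k^2 \pi^2$.

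For the second inequality, I would apply the Cauchy--Schwarz inequality in $\ell^2(\Z)$ to the factorisation $\lambda_k^{1/2} |v_k|^2 = |v_k| \cdot \lambda_k^{1/2} |v_k|$, yielding
\begin{equation}
\|v\|_{H_{1/4}}^2 = \sum_{k \in \Z} \lambda_k^{1/2} |v_k|^2 \leq \left( \sum_{k \in \Z} |v_k|^2 \right)^{\!\!1/2} \left( \sum_{k \in \Z} \lambda_k |v_k|^2 \right)^{\!\!1/2} = \|v\|_H \, \|v\|_{H_{1/2}}.
\end{equation}

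Both steps are routine; the only mild care needed is pinning down the constant $2^{1/4}$ in the first bound and handling the $k = 0$ eigenvalue $\lambda_0 = \vt$, which is strictly positive by the assumption $\vt \in (0,\infty)$ imposed in Section~\ref{setting:kura}.
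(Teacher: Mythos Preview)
Your proof is correct and follows essentially the same approach as the paper: both arguments compute $\|v'\|_H^2 = \sum_{k \in \Z} 4k^2\pi^2 |v_k|^2$ via the Fourier expansion (you differentiate term by term, the paper writes it as integration by parts inside $\langle e_k, v'\rangle_H$), then verify the coefficient bound $4k^2\pi^2 \leq \sqrt{2}\,\lambda_k^{1/2}$, and both obtain the second inequality by Cauchy--Schwarz/H\"older on the coefficient sums. Your explicit check of the coefficient inequality $16k^4\pi^4 - 8k^2\pi^2 + 2\vt \geq 0$ is slightly more detailed than the paper's, which simply asserts the bound.
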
	
\begin{proof}[Proof of Lemma~\ref{kura_norms}.]
Note that Parseval's identity and integration by parts prove that 
\begin{align}\label{kura_norms_1}
\begin{split}
&\|v'\|_H^2  = \sum_{k \in \mathbb{Z}} \left|\langle  e_k, v'  \rangle _H\right|^2 = \sum_{k \in \mathbb{Z}} \left|\int_0^1 \und{e_k}(x) \,  (\und{v})'(x) \,  dx \right|^2 = \sum_{k \in \mathbb{Z}} \left|\int_0^1  (\und{e_k})'(x) \,  \und{v}(x)  \, dx \right|^2 \\
&= \sum_{k \in \mathbb{Z}} \left|2 k \pi \int_0^1  \und{e_{-k}}(x) \, \und{v}(x) \, dx \right|^2  = \sum_{k \in \mathbb{Z}} 4 k^2 \pi^2 \left| \langle  e_{-k}, v \rangle_H \right|^2 = \sum_{k \in \mathbb{Z}} 4 k^2 \pi^2  \left| \langle   e_{k}, v \rangle_H \right|^2 \\
&\leq  \sum_{k \in \mathbb{Z}} \sqrt{2} \left(16 k^4 \pi^4 - 4k^2 \pi^2 +\vt\right)^{\nicefrac{1}{2}}  \left| \langle   e_{k}, v \rangle_H \right|^2 = \sqrt{2} \sum_{k \in \mathbb{Z}} |\lambda_k|^{\nicefrac{1}{2}}  \left| \langle   e_{k}, v \rangle_H \right|^2 = \sqrt{2} \, \|v\|_{H_{1/4}}^2.
\end{split}
\end{align}
Moreover, H\"older's inequality shows that
\begin{align}
\begin{split}
\|v\|_{H_{1/4}}^2 = \sum_{k \in \mathbb{Z}} |\lambda_k|^{\nicefrac{1}{2}}  \left| \langle  e_{k}, v \rangle_H \right|^2  \leq \sqrt{\smallsum_{k \in \mathbb{Z}}   \left| \langle e_{k}, v \rangle_H \right|^2} \sqrt{\smallsum_{k \in \mathbb{Z}} |\lambda_k|  \left| \langle  e_{k}, v \rangle_H \right|^2} = \|v\|_H \|v\|_{H_{1/ 2}} .
\end{split}
\end{align}
Combining this and \eqref{kura_norms_1} completes the proof of Lemma~\ref{kura_norms}.
\end{proof}

The next simple lemma is a slight modification of Lemma~5.7 in Bl\"omker \& Jentzen~\cite{BloemkerJentzen2013}.
\begin{lemma}
\label{coer:kura}
Assume the setting in Section~\ref{setting:kura} and let $v, w \in H_1$, $\varepsilon \in (0, \infty)$. Then
\begin{align}
\left< v, F( v + w ) \right>_H \leq \| v \|_H^2  \left(  \tfrac{3\vt}{2} + \tfrac{\kappa^4}{16} [1 + \nicefrac{1}{\varepsilon}]^2 + \varepsilon \! \left[\sup\nolimits_{x \in (0,1)} |\und{w}(x)|^2 \right]\right) + \tfrac{1}{2} \|v\|_{H_{1/2}}^2 + \tfrac{\vt}{2} \|w \|_H^2 +  \tfrac{1}{4} \| w^2 \|_H^2.
\end{align}
\begin{proof}[Proof of Lemma~\ref{coer:kura}.]
Note that integration by parts yields that
\begin{align}
\begin{split}
\left< v, F( v + w ) \right>_H   &= \left< v, \vt (v+w) - \tfrac{\kappa}{2}((v+w)^2)' \right>_H \\
& = \vt \|v\|_H^2 + \vt \! \left< v, w \right>_H - \frac{\kappa}{2} \int_0^1 \und{v}(x) \big([\und{v}+\und{w}]^2\big)'(x) \, dx \\
& = \vt \|v\|_H^2 + \vt \! \left< v, w \right>_H +\frac{\kappa}{2} \int_0^1 (\und{v})'(x) \, [\und{v}(x)+ \und{w}(x)]^2 \, dx \\
&= \vt \|v\|_H^2 + \vt \! \left< v, w \right>_H + \kappa \int_0^1 (\und{v})'(x) \, \und{v}(x) \, \und{w}(x) \, dx + \frac{\kappa}{2} \int_0^1 (\und{v})'(x) \, |\und{w}(x)|^2 \, dx.
\end{split}
\end{align}
The Cauchy-Schwartz inequality and the fact that $\forall \, x, y, r \in (0, \infty) \colon xy \leq \frac{x^2}{2r} + \frac{r y^2}{2}$ therefore prove that
\begin{align}\label{kur_coer}
\begin{split}
\left< v, F( v + w ) \right>_H &\leq \vt \|v\|_H^2 + \vt \|v\|_H \|w\|_H+  |\kappa| \|v'\|_H \| v \, w \|_H + \tfrac{|\kappa|}{2} \|v'\|_H \|w^2\|_H \\
& \leq \vt \|v\|_H^2 + \tfrac{\vt}{2} \|v \|_H^2 + \tfrac{\vt}{2} \|w \|_H^2 +  \tfrac{\kappa^2}{4\varepsilon} \|v'\|_H^2 + \varepsilon \| v \, w \|_H^2 + \tfrac{\kappa^2}{4}\|v'\|_H^2 + \tfrac{1}{4} \| w^2 \|_H^2 \\
& \leq \tfrac{3\vt}{2} \|v \|_H^2 + \tfrac{\vt}{2} \|w \|_H^2+  \big[\tfrac{\kappa^2}{4 \varepsilon} + \tfrac{\kappa^2}{4}\big]\|v'\|_H^2 + \varepsilon \| v \|_H^2 \! \left[\sup\nolimits_{x \in (0,1)} |\und{w}(x)|^2 \right] + \tfrac{1}{4} \| w^2 \|_H^2. 
\end{split}
\end{align}
Lemma~\ref{kura_norms} and again the fact that $\forall \, x, y, r \in (0, \infty) \colon xy \leq \frac{x^2}{2r} + \frac{r y^2}{2}$ hence show that 
\begin{align}
\begin{split}
&\left< v, F( v + w ) \right>_H   \\
& \leq \tfrac{3\vt}{2} \|v \|_H^2 + \tfrac{\vt}{2} \|w \|_H^2+ \sqrt{2} \kappa^2\big[\tfrac{1}{4 \varepsilon} + \tfrac{1}{4}\big] \|v\|_H \|v\|_{H_{1/2}}+ \varepsilon \| v \|_H^2 \!  \left[\sup\nolimits_{x \in (0,1)} |\und{w}(x)|^2 \right] + \tfrac{1}{4} \| w^2 \|_H^2 \\
& \leq  \tfrac{3\vt}{2} \|v \|_H^2 + \tfrac{\vt}{2} \|w \|_H^2+   \kappa^4 \big[\tfrac{1}{4 \varepsilon} + \tfrac{1}{4}\big]^2 \|v\|_H^2 + \tfrac{1}{2} \|v\|_{H_{1/2}}^2+ \varepsilon \| v \|_H^2 \! \left[\sup\nolimits_{x \in (0,1)} |\und{w}(x)|^2 \right] + \tfrac{1}{4} \| w^2 \|_H^2 \\
& = \| v \|_H^2  \left(  \tfrac{3\vt}{2} + \tfrac{\kappa^4}{16} [1 + \nicefrac{1}{\varepsilon}]^2 + \varepsilon  \! \left[\sup\nolimits_{x \in (0,1)} |\und{w}(x)|^2 \right] \right) + \tfrac{1}{2} \|v\|_{H_{1/2}}^2 + \tfrac{\vt}{2} \|w \|_H^2 +  \tfrac{1}{4} \| w^2 \|_H^2 . 
\end{split}
\end{align}
The proof of Lemma~\ref{coer:kura} is thus completed.
\end{proof}

\end{lemma}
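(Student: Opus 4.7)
The plan is to expand $F(v+w)=\vt(v+w)-\tfrac{\kappa}{2}((v+w)^2)'$ inside $\langle v,F(v+w)\rangle_H$, perform integration by parts on the nonlinear term using the periodic boundary conditions (which are encoded in the fact that $e_k$, $k\in\Z$, is an orthonormal basis of eigenfunctions of $A$), and then repeatedly apply Cauchy-Schwarz together with the weighted Young inequality $xy\le \frac{x^2}{2r}+\frac{ry^2}{2}$ to split the mixed terms into pieces involving only $v$ or only $w$.

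First, integration by parts together with periodicity yields
\begin{equation*}
  -\tfrac{\kappa}{2}\int_0^1 \und v(x)\,\bigl([\und v+\und w]^2\bigr)'(x)\,dx
  = \tfrac{\kappa}{2}\int_0^1 (\und v)'(x)\,[\und v(x)+\und w(x)]^2\,dx.
\end{equation*}
Expanding $(\und v+\und w)^2=(\und v)^2+2\und v\und w+(\und w)^2$, the pure $\int (\und v)'(\und v)^2\,dx=\tfrac{1}{3}[(\und v)^3]_0^1$ contribution vanishes by periodicity, leaving only the mixed term $\kappa\int (\und v)'\und v\,\und w\,dx$ and the $w$-quadratic term $\tfrac{\kappa}{2}\int (\und v)' (\und w)^2\,dx$. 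This is the step that crucially uses the cubic-over-periodic-domain cancellation and is what makes such a coercivity-type estimate possible at all; it is the conceptual heart of the lemma, though not the hardest step technically.

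Next I would bound the four resulting contributions:
\begin{itemize}
\item The linear cross term $\vt\langle v,w\rangle_H$ by $\tfrac{\vt}{2}\|v\|_H^2+\tfrac{\vt}{2}\|w\|_H^2$.
\item The mixed cubic term by $|\kappa|\,\|v'\|_H\,\|vw\|_H$ using Cauchy-Schwarz, and then by $\tfrac{\kappa^2}{4\varepsilon}\|v'\|_H^2+\varepsilon\|vw\|_H^2$ via Young, and finally using $\|vw\|_H^2\le \|v\|_H^2\sup_{x\in(0,1)}|\und w(x)|^2$.
\item The $w^2$ term by $\tfrac{|\kappa|}{2}\|v'\|_H\|w^2\|_H$ and then by $\tfrac{\kappa^2}{4}\|v'\|_H^2+\tfrac14\|w^2\|_H^2$.
\item The $\vt\|v\|_H^2$ term is kept as is.
\end{itemize}
Collecting, this produces a bound in which the only "bad" remaining quantity is a coefficient $\tfrac{\kappa^2}{4}(1+\tfrac{1}{\varepsilon})\|v'\|_H^2$, which must still be converted into the $H_{1/2}$-norm appearing on the right-hand side of the claim.

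The hard step — and really the only one requiring care — is this final conversion. Here I would invoke Lemma~\ref{kura_norms}, which gives $\|v'\|_H^2\le \sqrt{2}\,\|v\|_{H_{1/4}}^2\le\sqrt{2}\,\|v\|_H\|v\|_{H_{1/2}}$, and then apply weighted Young's inequality $ab\le \tfrac{a^2}{4c}+c\,b^2$ with $c=\tfrac12$, $a=\tfrac{\sqrt{2}\kappa^2}{4}(1+\tfrac{1}{\varepsilon})\|v\|_H$, $b=\|v\|_{H_{1/2}}$, to obtain
\begin{equation*}
  \tfrac{\kappa^2}{4}\!\left(1+\tfrac{1}{\varepsilon}\right)\|v'\|_H^2
  \le \tfrac{\kappa^4}{16}\!\left(1+\tfrac{1}{\varepsilon}\right)^{\!2}\|v\|_H^2
     +\tfrac{1}{2}\|v\|_{H_{1/2}}^2.
\end{equation*}
The constant $\tfrac{1}{2}$ in front of the $H_{1/2}$-norm and the square on $(1+\tfrac{1}{\varepsilon})$ in the $\|v\|_H^2$ coefficient come out automatically from this choice of Young weight; matching these coefficients to the statement dictates precisely the factor $\tfrac{\kappa^4}{16}$. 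Summing all contributions and relabeling gives the asserted inequality.
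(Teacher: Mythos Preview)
Your proof is correct and follows essentially the same route as the paper's: integration by parts, Cauchy--Schwarz plus weighted Young on each mixed term, then Lemma~\ref{kura_norms} followed by one more weighted Young to trade $\|v'\|_H^2$ for $\tfrac{\kappa^4}{16}(1+\tfrac1\varepsilon)^2\|v\|_H^2+\tfrac12\|v\|_{H_{1/2}}^2$. The only difference is cosmetic: you make the vanishing of $\int_0^1 (\und v)'(\und v)^2\,dx$ by periodicity explicit, whereas the paper silently drops this term when passing to the expanded form.
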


\begin{lemma}\label{nonlin:kura}
Assume the setting in Section~\ref{setting:kura} and let $v, w \in H_{\nicefrac{1}{16}}$. Then
\begin{align}
\begin{split}
 &\|F(v) - F(w) \|_{H_{-1 /4}} \\
 & \leq  \left( \vt^{\nicefrac{11}{16}} + |\kappa| \left[ \sup_{u \in  H_{ 1/16 }\backslash \{0\}} \frac{\|u\|^2_{L^4(\lambda_{(0,1)}; \R)}}{\|u\|^2_{H_{1/16}}} \right] \right) \big( 1+ \left\| v \right\|_{ H_{1/16 }} +  \left\| w \right\|_{ H_{1/16 }} \big) \left\| v - w \right\|_{ H_{1/16}} < \infty.
\end{split}
\end{align}	
\end{lemma}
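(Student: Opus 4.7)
The plan is to decompose
\[
F(v)-F(w) \;=\; \vt(v-w) \;-\; \tfrac{\kappa}{2}\bigl((v-w)(v+w)\bigr)'
\]
via the factorization $v^2-w^2 = (v-w)(v+w)$, apply the triangle inequality in $H_{-1/4}$, and handle the two summands separately. For the linear piece I exploit that $A$ is diagonal in $(e_k)_{k\in\Z}$ with eigenvalues $-\lambda_k$ and that $\lambda_0 = \vt = \min_k \lambda_k$, so $\|(-A)^{-s}\|_{L(H)} = \vt^{-s}$ for every $s\geq 0$. Hence $\|v-w\|_{H_{-1/4}} \leq \vt^{-5/16}\|v-w\|_{H_{1/16}}$, and multiplication by $\vt$ produces exactly the coefficient $\vt^{11/16}$ in the claim.

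For the nonlinear piece I first establish the elementary bound $\|u'\|_{H_{-1/4}} \leq 2^{1/4}\|u\|_H$ for every $u\in H$ via a Fourier computation in the spirit of Lemma~\ref{kura_norms}. Using $(e_n)' = -2n\pi\,e_{-n}$ and $(e_{-n})' = 2n\pi\,e_n$ for $n\geq 1$, one computes
\[
\|u'\|_{H_{-1/4}}^2 \;=\; \sum_{n\geq 1} \lambda_n^{-1/2}\,(2n\pi)^2\bigl(|\langle e_n,u\rangle_H|^2 + |\langle e_{-n},u\rangle_H|^2\bigr),
\]
and the inequality $\lambda_n \geq 8n^4\pi^4$ for $n\geq 1$ (which is immediate from $\lambda_n - 8n^4\pi^4 = 4n^2\pi^2(2n^2\pi^2-1) + \vt \geq 0$) yields $\sup_{n\geq 1}\lambda_n^{-1/2}(2n\pi)^2 \leq \sqrt{2}$. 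Applying this estimate to $u = (v-w)(v+w)\in H$ (the product lies in $L^2$ because both factors lie in $L^4$), followed by Cauchy--Schwarz to get $\|(v-w)(v+w)\|_H \leq \|v-w\|_{L^4}\|v+w\|_{L^4}$, and finally estimating each $L^4$ factor through the Sobolev-type constant $C := \sup_{u\in H_{1/16}\backslash\{0\}}\|u\|_{L^4(\lambda_{(0,1)};\R)}^2/\|u\|_{H_{1/16}}^2$, bounds the nonlinear contribution by $\tfrac{|\kappa|}{2}\cdot 2^{1/4}\cdot C\,\|v-w\|_{H_{1/16}}(\|v\|_{H_{1/16}} + \|w\|_{H_{1/16}})$. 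Since $2^{1/4}/2 = 2^{-3/4}<1$, this is dominated by $|\kappa|\,C\,(\|v\|_{H_{1/16}}+\|w\|_{H_{1/16}})\|v-w\|_{H_{1/16}}$.

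Combining the two estimates and absorbing the linear part via $\|v-w\|_{H_{1/16}} \leq (1+\|v\|_{H_{1/16}}+\|w\|_{H_{1/16}})\|v-w\|_{H_{1/16}}$ yields the stated inequality. Finiteness of the right-hand side then reduces to $C<\infty$, which is the one-dimensional critical Sobolev embedding $H_{1/16}\hookrightarrow L^4((0,1);\R)$: since $A$ has eigenvalues of order $k^4$, the space $H_{1/16}$ carries the same regularity as the periodic Sobolev space $H^{1/4}$, and $H^{1/4}\hookrightarrow L^4$ is the classical endpoint embedding in one space dimension.

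The main obstacle is confirming $C<\infty$; this is a borderline Sobolev embedding that must be invoked (e.g.\ via the Gagliardo--Nirenberg inequality or the sharp Sobolev embedding theorem on the torus) rather than proved from scratch in the present note. All other steps are routine manipulations with the fourth-order Fourier multiplier defining $A$ and the first-derivative operator on $(0,1)$ with periodic boundary conditions.
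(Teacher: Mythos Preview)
Your proposal is correct and follows essentially the same approach as the paper's proof: the same decomposition, the same treatment of the linear part via $\|(-A)^{-5/16}\|_{L(H)}=\vt^{-5/16}$, the same factorization $v^2-w^2=(v+w)(v-w)$ followed by Cauchy--Schwarz in $L^2$ and the Sobolev constant $C$. The only cosmetic difference is in how you obtain the key estimate $\|u'\|_{H_{-1/4}}\leq 2^{1/4}\|u\|_H$: the paper uses the commutation $(-A)^{-1/4}(u')=\bigl((-A)^{-1/4}u\bigr)'$ together with Lemma~\ref{kura_norms} (i.e., $\|v'\|_H\leq 2^{1/4}\|v\|_{H_{1/4}}$ applied to $v=(-A)^{-1/4}u$), whereas you reprove the same bound by a direct Fourier computation and the elementary inequality $\lambda_n\geq 8n^4\pi^4$ for $n\geq 1$; both routes yield the identical constant $2^{1/4}$ and the arguments are otherwise indistinguishable.
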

\begin{proof}[Proof of Lemma~\ref{nonlin:kura}.]
First, note that
\begin{align}\label{v-w:kura}
\begin{split}
\|v-w \|_{H_{-1 /4}} &\leq \|(-A)^{\nicefrac{-5}{16}} \|_{L(H)} \|v-w \|_{H_{1 /16}} = \vt^{\nicefrac{-5}{16}} \|v-w \|_{H_{1 /16}} \\
& \leq \vt^{\nicefrac{-5}{16}} \big( 1+ \left\| v \right\|_{ H_{1/16 }} +  \left\| w \right\|_{ H_{1/16 }} \big) \|v-w \|_{H_{1 /16}}.
\end{split}
\end{align}	
Next observe that for all $u \in H$ it holds that 
\begin{align}
(-A)^{-\nicefrac{1}{4}}(u')= \big((-A)^{-\nicefrac{1}{4}}u\big)'.
\end{align}
This and Lemma~\ref{kura_norms} prove that
\begin{align}
\begin{split}
&\|(v^2)' - (w^2)' \|_{H_{-1 /4}} = \|(-A)^{\nicefrac{-1}{4}}((v^2)' - (w^2)') \|_{H} = \|((-A)^{\nicefrac{-1}{4}} [v^2 - w^2])' \|_{H} \\
& \leq 2^{\nicefrac{1}{4}} \|(-A)^{\nicefrac{-1}{4}} [v^2 - w^2] \|_{H_{1/4}} 
 = 2^{\nicefrac{1}{4}} \| v^2 - w^2 \|_H  \leq 2^{\nicefrac{1}{4}} \|v+w \|_{L^4(\lambda_{(0,1)}; \R)} \|v -w \|_{L^4(\lambda_{(0,1)}; \R)} \\
& \leq 2^{\nicefrac{1}{4}} \left[\sup_{u \in  H_{ 1/16 } \backslash \{0\}} \frac{\|u\|_{L^4(\lambda_{(0,1)}; \R)}}{\|u\|_{H_{1/16}}} \right]^2 \! \! \left\| v + w \right\|_{ H_{1/16 }} \left\| v - w \right\|_{ H_{1/16 }} \\
& \leq 2 \left[\sup_{u \in  H_{ 1/16 }\backslash \{0\}} \frac{\|u\|^2_{L^4(\lambda_{(0,1)}; \R)}}{\|u\|^2_{H_{1/16}}} \right] \! \big( 1+ \left\| v \right\|_{ H_{1/16 }} +  \left\| w \right\|_{ H_{1/16 }} \big) \left\| v - w \right\|_{ H_{1/16 }}.
\end{split}
\end{align}
This together with \eqref{v-w:kura} shows that
\begin{align}\label{v-w:kura:2}
\begin{split}
& \|F(v) - F(w) \|_{H_{-1 /4}}  = \left\| \vt (v-w) - \tfrac{\kappa}{2}\left((v^2)' - (w^2)'\right) \right\|_{H_{-1 /4}} \\
& \leq \vt \|v-w \|_{H_{-1 /4}} + \tfrac{|\kappa|}{2} \|(v^2)' - (w^2)' \|_{H_{-1 /4}} \\
& \leq \left( \vt^{\nicefrac{11}{16}} + |\kappa| \left[\sup_{u \in  H_{ 1/16 }\backslash \{0\}} \frac{\|u\|^2_{L^4(\lambda_{(0,1)}; \R)}}{\|u\|^2_{H_{1/16}}} \right]\right) \big( 1+ \left\| v \right\|_{ H_{1/16 }} +  \left\| w \right\|_{ H_{1/16 }} \big) \left\| v - w \right\|_{ H_{1/16}}.
\end{split}
\end{align}
Next observe that the Sobolev embedding theorem ensures that
\begin{align}
\sup_{u \in  H_{ 1/16 }\backslash \{0\}} \frac{\|u\|^2_{L^4(\lambda_{(0,1)}; \R)}}{\|u\|^2_{H_{1/16}}} < \infty.
\end{align}
Combining this with \eqref{v-w:kura:2} completes the proof of Lemma~\ref{nonlin:kura}.
\end{proof}

%%%%%%%%%%%%%%%%%%%%%%%%%%%%%%%%%%%%%%%%%%%%%%%%%%%%%%%%%%%%%%%%%%

\subsection{Fernique's theorem}

\begin{lemma}\label{lemma:fernique}
Let $(V, \left\| \cdot \right\|_V)$ be a separable $\R$-Banach space, let $(\Omega, \mathcal{F}, \P)$ be a probability space, let $\mathcal{X} \colon \Omega \to V$ be a mapping which satisfies that for every $\varphi \in V'$ it holds that $\varphi \circ \mathcal{X} \colon \Omega \to \R $ is a
centered Gaussian random variable, and let $r \in (0, \infty)$ satisfy that $\P(\|\mathcal{X}\|_V^2 > r) \leq \nicefrac{1}{10}$. Then
\begin{align}\label{eq:fernique}
\begin{split}
\E \! \left[ \exp\! \left( \frac{\|\mathcal{X}\|_V^2}{18 r}\right)\right] \leq \sqrt{e} + \sum_{k=0}^{\infty} \bigg[\frac{e}{3}\bigg]^{2^k} < 13.
\end{split}
\end{align}
\end{lemma}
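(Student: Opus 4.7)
The plan is to run Fernique's classical symmetrization argument with the iteration constants chosen so that the two summands on the right-hand side of \eqref{eq:fernique} come out with the correct exponents. First I would verify that $\|\mathcal{X}\|_V$ is a genuine random variable: since $V$ is separable, the Hahn-Banach theorem together with a separability-based selection provides a sequence $(\varphi_n)_{n \in \N} \subseteq V'$ with $\sup_{n \in \N} \|\varphi_n\|_{V'} \leq 1$ and $\|v\|_V = \sup_{n \in \N} |\varphi_n(v)|$ for every $v \in V$, so $\|\mathcal{X}\|_V = \sup_{n \in \N} |\varphi_n \circ \mathcal{X}|$ is $\mathcal{F}/\mathcal{B}([0,\infty])$-measurable. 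I would then pass to the product space $(\Omega \times \Omega, \mathcal{F} \otimes \mathcal{F}, \P \otimes \P)$ to obtain two independent copies $\mathcal{X}_1, \mathcal{X}_2$ of $\mathcal{X}$; since every linear functional of $\mathcal{X}$ is centered Gaussian, $(\mathcal{X}_1, \mathcal{X}_2)$ is weakly jointly Gaussian, and a direct covariance computation shows that $Y := (\mathcal{X}_1 + \mathcal{X}_2)/\sqrt 2$ and $Z := (\mathcal{X}_1 - \mathcal{X}_2)/\sqrt 2$ are independent and each share the law of $\mathcal{X}$.

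From the identities $\mathcal{X}_1 = (Y + Z)/\sqrt 2$ and $\mathcal{X}_2 = (Y - Z)/\sqrt 2$ together with the triangle inequality I would then derive Fernique's core tail estimate
\begin{equation*}
\P(\|\mathcal{X}\|_V \leq s)\, \P(\|\mathcal{X}\|_V > t) \leq \P\!\left( \|\mathcal{X}\|_V > \tfrac{t - s}{\sqrt 2} \right)^{\!2}, \qquad 0 \leq s \leq t,
\end{equation*}
by observing that $\{\|\mathcal{X}_1\|_V \leq s, \|\mathcal{X}_2\|_V > t\} \subseteq \{\|Y\|_V > (t - s)/\sqrt 2, \|Z\|_V > (t - s)/\sqrt 2\}$ and invoking independence plus equidistribution of $Y$ and $Z$ with $\mathcal{X}$. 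Setting $t_0 := \sqrt r$, $t_{n+1} := \sqrt 2\, t_n + \sqrt r$, and $\alpha_n := \P(\|\mathcal{X}\|_V > t_n)$, the identity $(t_{n+1} - t_0)/\sqrt 2 = t_n$ combined with $\alpha_0 \leq 1/10$ and an induction on $n$ yields $\alpha_{n+1} \leq (10/9)\,\alpha_n^2$ and hence the doubly-exponential tail bound $\alpha_n \leq (9/10)\,(1/9)^{2^n}$. Solving the affine recursion explicitly gives $t_n = \sqrt r\,(\sqrt 2 + 1)\,((\sqrt 2)^{n+1} - 1)$, and in particular $t_{n+1}^2 \leq (3 + 2\sqrt 2)\, 2^{n+2}\, r$.

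Finally I would partition the expectation along the levels $t_n$ and estimate
\begin{equation*}
\E\!\left[\exp\!\left(\tfrac{\|\mathcal{X}\|_V^2}{18\, r}\right)\right] \leq \exp\!\left(\tfrac{t_0^2}{18\, r}\right) + \sum_{k = 0}^{\infty} \exp\!\left(\tfrac{t_{k+1}^2}{18\, r}\right) \alpha_k.
\end{equation*}
The first summand equals $e^{1/18} \leq \sqrt e$. For the series, using $t_{k+1}^2/(18\, r) \leq (3 + 2\sqrt 2)\, 2^{k+1}/9$ together with $\alpha_k \leq (9/10)(1/9)^{2^k}$, each term is dominated by $(9/10)\bigl(e^{(6 + 4\sqrt 2)/9}/9\bigr)^{2^k}$, which in turn is at most $(e/3)^{2^k}$ by the numerical inequality $(6 + 4\sqrt 2)/9 \leq 1 + \ln 3 = \ln(3 e)$. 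The strict bound $< 13$ then follows from a crude geometric-type tail estimate, e.g.\ $\sum_{k = 0}^{\infty}(e/3)^{2^k} \leq (e/3)/(1 - e/3) = e/(3 - e)$ combined with $\sqrt e + e/(3 - e) < 13$.

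The main obstacle, which is bookkeeping rather than conceptual, lies in the constant-matching in the last paragraph: the denominator $18$ and the base $e/3$ in \eqref{eq:fernique} force the precise choices $t_0 = \sqrt r$, $t_{n+1} = \sqrt 2\, t_n + \sqrt r$ (and the application of the Fernique recursion with $s = \sqrt r$), and one must verify the numerical inequality $(6 + 4\sqrt 2)/9 \leq \ln(3 e)$ in order to absorb the $9^{-2^k}$ tail factor into the target $(e/3)^{2^k}$.
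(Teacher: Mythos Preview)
Your argument is correct and is exactly the classical Fernique symmetrization proof (rotation of two independent copies, doubly-exponential tail recursion, level-set decomposition of the expectation) carried out with the specific choices $t_0=\sqrt{r}$, $t_{n+1}=\sqrt{2}\,t_n+\sqrt{r}$ that produce the constants $18$ and $e/3$ in the statement; the numerical checks you flag are all valid.

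The only difference from the paper is one of presentation, not of mathematics: the paper does not reprove the estimate at all but simply invokes it as a direct instance of Fernique's theorem in the form of Theorem~8.2.1 in Stroock~\cite{Stroock2010}, using only the hypothesis rewritten as $\P(\|\mathcal{X}\|_V^2\leq r)\geq\nicefrac{9}{10}$. What you have written is essentially the proof of that cited theorem, specialized to the present constants; this buys self-containedness at the cost of length, while the paper's one-line citation keeps the lemma as the black-box input it is meant to be for the subsequent moment bounds on the stochastic convolution.
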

\begin{proof}[Proof of Lemma \ref{lemma:fernique}.]
Note that \eqref{eq:fernique} is an immediate consequence of the fact that $\P(\|\mathcal{X}\|_V^2 \leq r) \geq \nicefrac{9}{10}$ and of Fernique's theorem (see, e.g., Theorem~8.2.1 in Stroock~\cite{Stroock2010}). The proof of Lemma~\ref{lemma:fernique} is thus completed.
\end{proof}

%%%%%%%%%%%%%%%%%%%%%%%%%%%%%%%%%%%%%%%%%%%%%%%%%%%%%%%%%%%%%%%%%%%%%%%%
\subsection{Properties of the stochastic convolution process}

\begin{lemma}\label{lem:o_expansion}
Assume the setting in Section~\ref{setting:kura} and let $(a_k)_{k \in \Z} \in \mathbb{M}(\Z, \R)$, $S \in \mathcal{P}_0(\Z)$. Then
\begin{align}
\E \! \left[ \left| \smallsum\limits_{k \in S} a_k \int_0^t e^{-\lambda_k(t-s)} \, \big( b_k \, d \! \left<  e_k, W_s \right>_H + \tilde{b}_k \, d \! \left<  e_{-k}, W_s\right>_H \big) \right|^2 \right] \leq \sum_{k \in S }  \frac{|a_k \, b_k|^2 + |a_{k} \, \tilde{b}_{k} |^2}{ \lambda_k}.
\end{align}
\end{lemma}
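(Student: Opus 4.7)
The plan is to recognize the scalar random variable inside the expectation as a single stochastic integral $ \int_0^t \Phi_s \, dW_s $ against the cylindrical Wiener process $W$, where $\Phi_s \in \mathrm{HS}(H,\R)$ is the (finite-rank, hence Hilbert--Schmidt) functional defined for all $h \in H$ by
\begin{equation*}
\Phi_s(h) = \sum_{k \in S} a_k \, e^{-\lambda_k(t-s)}\!\left[b_k \langle e_k, h\rangle_H + \tilde{b}_k \langle e_{-k}, h\rangle_H\right].
\end{equation*}
Once this identification is made, It\^o's isometry for cylindrical Wiener processes gives $\E[|\int_0^t \Phi_s \, dW_s|^2] = \int_0^t \|\Phi_s\|_{\mathrm{HS}(H,\R)}^2 \, ds$, and the problem reduces to computing/bounding the Hilbert--Schmidt norm pointwise in $s$.

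To evaluate $\|\Phi_s\|_{\mathrm{HS}(H,\R)}^2 = \sum_{j \in \Z} |\Phi_s(e_j)|^2$, I would observe that for $j \in \Z \setminus \{0\}$ the two indicator conditions $k = j$ and $-k = j$ single out at most one summand each, so
\begin{equation*}
\Phi_s(e_j) = \left[a_j b_j \mathbbm{1}_{\{j \in S\}} + a_{-j} \tilde{b}_{-j} \mathbbm{1}_{\{-j \in S\}}\right] e^{-\lambda_j(t-s)},
\end{equation*}
while for $j = 0$ the identity $e_{-0} = e_0$ collapses both contributions and yields $\Phi_s(e_0) = a_0(b_0 + \tilde{b}_0)\, e^{-\lambda_0(t-s)} \mathbbm{1}_{\{0 \in S\}}$. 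Applying the elementary inequality $|a+b|^2 \leq 2(|a|^2 + |b|^2)$ to each $|\Phi_s(e_j)|^2$, reindexing $j \mapsto -j$ in the half of the sum carrying $\tilde{b}_{-j}$, and using the evenness $\lambda_{-j} = \lambda_j$ of the eigenvalues (clear from $\lambda_k = 16 k^4 \pi^4 - 4 k^2 \pi^2 + \vt$) gives the bound
\begin{equation*}
\|\Phi_s\|_{\mathrm{HS}(H,\R)}^2 \leq 2 \sum_{k \in S} \bigl(|a_k b_k|^2 + |a_k \tilde{b}_k|^2\bigr) e^{-2\lambda_k(t-s)}.
\end{equation*}

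Finally, I would integrate in $s$ using $\int_0^t e^{-2\lambda_k(t-s)} \, ds = \frac{1 - e^{-2\lambda_k t}}{2\lambda_k} \leq \frac{1}{2\lambda_k}$ (which uses $\lambda_k > 0$, itself immediate from $\vt > 0$ and the elementary inequality $16 k^4 \pi^4 - 4 k^2 \pi^2 \geq 0$). The factor $2$ from $|a+b|^2 \leq 2(|a|^2+|b|^2)$ cancels exactly with the $\tfrac12$ from this time integration, producing $\sum_{k \in S} \frac{|a_k b_k|^2 + |a_k \tilde{b}_k|^2}{\lambda_k}$, as claimed.

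The main conceptual point (and the only place requiring real care) is the bookkeeping between the indices $k$ and $-k$: each Brownian coordinate $\langle e_j, W\rangle_H$ generically receives contributions from two different summands (the one indexed by $k = j$, through $b_j$, and the one indexed by $k = -j$, through $\tilde{b}_{-j}$), and moreover the $k=0$ case is degenerate since $e_{-0} = e_0$ makes the two Brownian sources coincide. Handling both cases uniformly via the $|a+b|^2 \leq 2(|a|^2+|b|^2)$ estimate, rather than trying to exploit independence of $\langle e_k, W\rangle_H$ and $\langle e_{-k}, W\rangle_H$ by hand, is what keeps the argument short and avoids splitting into cases.
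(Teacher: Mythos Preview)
Your proposal is correct and follows essentially the same approach as the paper: both apply It\^o's isometry, group contributions by the Brownian coordinate $\langle e_j, W\rangle_H$ they are integrated against, use $|a+b|^2 \leq 2(|a|^2+|b|^2)$ on the cross-terms, reindex via the evenness $\lambda_{-k}=\lambda_k$, and bound $\int_0^t e^{-2\lambda_k(t-s)}\,ds \leq \tfrac{1}{2\lambda_k}$. The only cosmetic difference is that you package the computation through the Hilbert--Schmidt norm of a finite-rank operator $\Phi_s$, whereas the paper works directly with scalar stochastic integrals and an explicit set decomposition $S\cap\mathbb{S}$, $S\setminus\mathbb{S}$, $\mathbb{S}\setminus S$ with $\mathbb{S}=\{-k:k\in S\}$; your indicator-function bookkeeping is arguably the cleaner of the two.
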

\begin{proof}[Proof of Lemma~\ref{lem:o_expansion}.]
Throughout this proof let $\mathbb{S} \in  \mathcal{P}_0(\Z)$ be the set given by $\mathbb{S}= \{-k  \colon  k \in S\}$. Next note that It\^o's isometry proves that
\begin{align}
\begin{split}
&\E \! \left[ \left| \smallsum\limits_{k \in S} a_k \int_0^t e^{-\lambda_k(t-s)} \, \big( b_k \, d \! \left<  e_k, W_s \right>_H + \tilde{b}_k \, d \! \left<  e_{-k}, W_s\right>_H \big) \right|^2 \right]\\
& = \E \! \left[ \left| \smallsum\limits_{k \in S} a_k \,b_k \int_0^t e^{-\lambda_k(t-s)}  \, d \! \left<  e_k, W_s \right>_H  + \smallsum\limits_{k \in S} a_k \, \tilde{b}_k  \int_0^t e^{-\lambda_k(t-s)}  \, d \! \left<  e_{-k}, W_s\right>_H \right|^2 \right] \\
& = \E \! \left[ \left| \smallsum\limits_{k \in S} a_k \,b_k  \int_0^t e^{-\lambda_k(t-s)}  \, d \! \left<  e_k, W_s \right>_H  + \smallsum\limits_{k \in \mathbb{S}} a_{-k} \, \tilde{b}_{-k}  \int_0^t e^{-\lambda_{-k}(t-s)}  \, d \! \left<  e_{k}, W_s\right>_H \right|^2 \right] \\
& = \E \Bigg[ \bigg| \smallsum\limits_{k \in S \cap \mathbb{S}} ( a_k \, b_k +  a_{-k} \, \tilde{b}_{-k})  \int_0^t e^{-\lambda_k(t-s)}  \, d \! \left<  e_k, W_s \right>_H + \smallsum\limits_{k \in S \backslash \mathbb{S}} a_k \, b_k \int_0^t e^{-\lambda_k(t-s)}  \, d \! \left<  e_k, W_s \right>_H\\
&+ \smallsum\limits_{k \in \mathbb{S} \backslash S} a_{-k} \, \tilde{b}_{-k} \int_0^t e^{-\lambda_{k}(t-s)}  \, d \! \left<  e_{k}, W_s\right>_H  \bigg|^2 \Bigg]  = \displaystyle\sum_{k \in S \cap \mathbb{S}} |a_k \, b_k +  a_{-k} \, \tilde{b}_{-k}|^2 \int_0^t e^{-2\lambda_k(t-s)} \, ds\\
& + \sum_{k \in S \backslash \mathbb{S}} |a_k \, b_k|^2 \int_0^t e^{-2\lambda_k(t-s)} \, ds  + \sum_{k \in \mathbb{S} \backslash S} |a_{-k} \, \tilde{b}_{-k} |^2 \int_0^t e^{-2\lambda_k(t-s)} \, ds.
\end{split}
\end{align}
The fact that $\forall \, x, y \in \R \colon |x+y|^2 \leq 2 x^2 + 2 y^2$ hence ensures that
\begin{align}
\begin{split}
&\E \! \left[ \left| \smallsum\limits_{k \in S} a_k \int_0^t e^{-\lambda_k(t-s)} \, \big( b_k \, d \! \left<  e_k, W_s \right>_H + \tilde{b}_k \, d \! \left<  e_{-k}, W_s\right>_H \big) \right|^2 \right]\\
& \leq \sum_{k \in S \cap \mathbb{S}}  \frac{|a_k \, b_k +  a_{-k} \, \tilde{b}_{-k}|^2}{2 \lambda_k}  + \sum_{k \in S \backslash \mathbb{S}} \frac{|a_k \, b_k|^2}{2 \lambda_k} + \sum_{k \in \mathbb{S} \backslash S} \frac{|a_{-k} \, \tilde{b}_{-k} |^2}{2 \lambda_k} \\
& \leq \sum_{k \in S \cap \mathbb{S}}  \frac{|a_k \, b_k|^2 +  |a_{-k} \, \tilde{b}_{-k}|^2}{ \lambda_k}  + \sum_{k \in S \backslash \mathbb{S}} \frac{|a_k \, b_k|^2}{\lambda_k} + \sum_{k \in \mathbb{S} \backslash S } \frac{|a_{-k} \, \tilde{b}_{-k} |^2}{ \lambda_k}.
\end{split}
\end{align}
This yields that
\begin{align}
\begin{split}
&\E \! \left[ \left| \smallsum\limits_{k \in S} a_k \int_0^t e^{-\lambda_k(t-s)} \, \big( b_k \, d \! \left<  e_k, W_s \right>_H + \tilde{b}_k \, d \! \left<  e_{-k}, W_s\right>_H \big) \right|^2 \right]\\ 
& \leq \sum_{k \in S \cap \mathbb{S}}  \frac{|a_k \, b_k|^2}{ \lambda_k} +  \sum_{k \in S \cap \mathbb{S}}  \frac{|a_{-k} \, \tilde{b}_{-k}|^2}{ \lambda_k}  + \sum_{k \in S \backslash \mathbb{S}} \frac{|a_k \, b_k|^2}{\lambda_k} + \sum_{k \in \mathbb{S} \backslash S } \frac{|a_{-k} \, \tilde{b}_{-k} |^2}{ \lambda_k}\\
& = \sum_{k \in S \cap \mathbb{S}}  \frac{|a_k \, b_k|^2}{ \lambda_k} +  \sum_{k \in S \cap \mathbb{S}}  \frac{|a_{k} \, \tilde{b}_{k}|^2}{ \lambda_k}  + \sum_{k \in S \backslash \mathbb{S}} \frac{|a_k \, b_k|^2}{\lambda_k} + \sum_{k \in S \backslash \mathbb{S} } \frac{|a_{k} \, \tilde{b}_{k} |^2}{ \lambda_k} = \sum_{k \in S} \frac{|a_k \, b_k|^2 + |a_{k} \, \tilde{b}_{k}|^2}{\lambda_k}.\\
\end{split}
\end{align}
The proof of Lemma~\ref{lem:o_expansion} is thus completed.
\end{proof}

\begin{lemma}\label{kura:phi(w)_finite}
Assume the setting in Section~\ref{setting:kura}, let $p \in (\nicefrac{1}{\beta}, \infty)$, $t \in [0,T]$, $n \in \N$, and let $Y \colon \Omega \to \R$ be a standard normal random variable. Then 
\begin{align}\label{kura:norm:eq}
\begin{split}
& \left(\E \! \left[\sup\nolimits_{x \in (0,1)} | \und{\mathcal{O}_t^n}(x) |^2 \right] \right)^{\!\nicefrac{1}{2}} \leq  \sqrt{10} \,\big( \E \big[|Y|^p \big] \big)^{\nicefrac{1}{p}} \left[ \sum_{k=-n}^{n} \frac{ \max \{|k \pi|^{4 \beta}, 1 \} \big(|b_k|^2 +   | \tilde{b}_{k} |^2 \big)}{\lambda_k}\right]^{\nicefrac{1}{2}} \\
& \quad \cdot \Big[\sup \! \Big(\Big\{ \! \sup\nolimits_{x \in (0,1)} |v(x)| \colon \big[v \in \mathcal{C}((0,1), \R) \text{ and } \|v\|_{\W^{\beta, p}((0,1), \mathbb{R})} \leq 1\big] \Big\}\Big)\Big]   < \infty.
\end{split}
\end{align}
\end{lemma}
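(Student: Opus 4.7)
The plan is to bound $\sup_{x \in (0,1)} |\und{\mathcal{O}_t^n}(x)|^2$ pathwise by the squared Sobolev-Slobodeckij norm $\|\und{\mathcal{O}_t^n}\|^2_{\W^{\beta,p}((0,1), \R)}$ (via the embedding constant $C_S$ equal to the supremum appearing on the right-hand side of the claim) and then to estimate that norm by combining subadditivity, Jensen's inequality, Gaussianity of the pointwise evaluations of $\und{\mathcal{O}_t^n}$, and the covariance bound from Lemma~\ref{lem:o_expansion}. First, homogeneity gives $\sup_{x \in (0,1)} |v(x)| \leq C_S \|v\|_{\W^{\beta,p}((0,1), \R)}$ for every $v \in \mathcal{C}((0,1), \R)$, so applying this to the (continuous) sample paths of $\und{\mathcal{O}_t^n}$ yields the pathwise bound $\sup_{x \in (0,1)} |\und{\mathcal{O}_t^n}(x)|^2 \leq C_S^2 \|\und{\mathcal{O}_t^n}\|^2_{\W^{\beta,p}((0,1), \R)}$.

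Since $p > \nicefrac{1}{\beta} > 2$, the map $x \mapsto x^{2/p}$ is concave on $[0,\infty)$ with $0 \mapsto 0$, hence subadditive. Setting $A = \int_0^1 |\und{\mathcal{O}_t^n}(x)|^p \, dx$ and $B = \int_0^1 \int_0^1 |\und{\mathcal{O}_t^n}(x) - \und{\mathcal{O}_t^n}(y)|^p / |x-y|^{1 + \beta p} \, dx \, dy$, subadditivity gives $\|\und{\mathcal{O}_t^n}\|^2_{\W^{\beta,p}((0,1), \R)} = (A+B)^{2/p} \leq A^{2/p} + B^{2/p}$. Taking expectation, applying Jensen's inequality (using concavity of $x \mapsto x^{2/p}$), and Fubini yields $\E[A^{2/p}] \leq (\int_0^1 \E[|\und{\mathcal{O}_t^n}(x)|^p] \, dx)^{2/p}$ and analogously for $\E[B^{2/p}]$. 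The Gaussian moment identity $\E[|Z|^p] = \E[|Y|^p] \, (\E[Z^2])^{p/2}$, applied to the centered Gaussians $\und{\mathcal{O}_t^n}(x)$ and $\und{\mathcal{O}_t^n}(x) - \und{\mathcal{O}_t^n}(y)$ (linear combinations of jointly Gaussian stochastic integrals against $W$), then reduces the problem to estimating the second moments $\sigma_x^2 := \E[|\und{\mathcal{O}_t^n}(x)|^2]$ and $\tau_{xy}^2 := \E[|\und{\mathcal{O}_t^n}(x) - \und{\mathcal{O}_t^n}(y)|^2]$.

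Applying Lemma~\ref{lem:o_expansion} with $a_k = \und{e_k}(x)$ and using $|\und{e_k}(x)|^2 \leq 2$ gives $\sigma_x^2 \leq 2 S$, where $S := \sum_{k=-n}^n \max\{|k\pi|^{4\beta}, 1\}(|b_k|^2 + |\tilde{b}_k|^2)/\lambda_k$ is the target sum (note $\max\{|k\pi|^{4\beta}, 1\} \geq 1$). For the difference, Lemma~\ref{lem:o_expansion} with $a_k = \und{e_k}(x) - \und{e_k}(y)$ combined with the elementary estimate $|\und{e_k}(x) - \und{e_k}(y)|^2 \leq 8 (|k| \pi |x-y|)^{4\beta}$ --- which follows from $|\und{e_k}(x) - \und{e_k}(y)| \leq 2\sqrt{2} \min\{1, |k|\pi|x-y|\}$ together with $\min\{1, u\} \leq u^{2\beta}$ for $u \geq 0$ (valid since $\beta \leq \nicefrac{1}{2}$) --- yields $\tau_{xy}^2 \leq 8 |x-y|^{4\beta} S$. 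Integration then produces $\int_0^1 \sigma_x^p \, dx \leq (2S)^{p/2}$ and, using $\beta p > 1$ so that $\beta p (\beta p + 1) > 2$,
\begin{equation*}
\int_0^1\!\! \int_0^1 \frac{\tau_{xy}^p}{|x-y|^{1 + \beta p}} \, dx \, dy \leq (8 S)^{p/2} \int_0^1\!\! \int_0^1 |x-y|^{\beta p - 1} \, dx \, dy = \frac{2 \, (8 S)^{p/2}}{\beta p (\beta p + 1)} \leq (8 S)^{p/2}.
\end{equation*}

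Raising each bound to the $(\nicefrac{2}{p})$-th power and summing gives the decisive constant $2 S + 8 S = 10 S$, which produces the claimed $\sqrt{10}$ prefactor upon taking square roots, so $\E[\sup_{x \in (0,1)} |\und{\mathcal{O}_t^n}(x)|^2] \leq 10 \, C_S^2 \, (\E[|Y|^p])^{2/p} \, S$. Finiteness follows because $n < \infty$ makes $S$ a finite sum, $C_S < \infty$ by the Sobolev-Morrey embedding (granted by $\beta p > 1$), and $\E[|Y|^p] < \infty$. The main obstacle is the sharp kernel estimate $|\und{e_k}(x) - \und{e_k}(y)|^2 \leq 8(|k|\pi|x-y|)^{4\beta}$: the hypothesis $\beta < \nicefrac{1}{2}$ is used in an essential way precisely here via $\min\{1, u\} \leq u^{2\beta}$. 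A minor auxiliary point is the measurability of $\sup_{x \in (0,1)} |\und{\mathcal{O}_t^n}(x)|$, which follows from the continuity of $\und{\mathcal{O}_t^n}$ (a finite trigonometric polynomial) since the supremum over $(0,1)$ coincides with the supremum over any countable dense subset.
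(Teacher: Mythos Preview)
Your proof is correct and follows essentially the same route as the paper's: both apply the Sobolev--Morrey embedding to pass to the $\W^{\beta,p}$-norm, use the Gaussian moment identity to reduce the $p$-th moments of $\und{\mathcal{O}_t^n}(x)$ and its increments to second moments, invoke Lemma~\ref{lem:o_expansion} for the covariance bounds, and rely on the same trigonometric increment estimate $|\und{e_k}(x)-\und{e_k}(y)|^2 \leq 8\,|k\pi|^{4\beta}|x-y|^{4\beta}$ (exploiting $\beta<\tfrac12$) together with $\beta p>1$ for integrability. The only cosmetic difference is the order of operations: the paper first applies Jensen to $\|\cdot\|_{\W^{\beta,p}}^2$ to pass to $(\E[\|\cdot\|_{\W^{\beta,p}}^p])^{2/p}$ and then uses subadditivity of $x\mapsto x^{2/p}$ at the end, whereas you split $(A+B)^{2/p}\leq A^{2/p}+B^{2/p}$ pathwise first and then apply Jensen to each term---both paths land on the identical bound $2S+8S=10S$.
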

\begin{proof}[Proof of Lemma~\ref{kura:phi(w)_finite}.] 
First, note that Jensen's inequality proves that
\begin{align}\label{kura:phi(w)_eq1}
\begin{split}
&\E \! \left[\sup\nolimits_{x \in (0,1)} | \und{\mathcal{O}_t^n}(x) |^2 \right] \\
&\leq \Big[\sup \! \Big(\Big\{ \! \sup\nolimits_{x \in (0,1)} |v(x)| \colon \big[v \in \mathcal{C}((0,1), \R) \text{ and } \|v\|_{\W^{\beta, p}((0,1), \mathbb{R})} \leq 1\big] \Big\}\Big)\Big]^2 \, \E \! \left[\big\| \und{\mathcal{O}_t^n} \big\|_{\W^{\beta, p}((0,1), \mathbb{R})}^2 \right] \\
& \leq \Big[\sup \!\Big(\Big\{ \! \sup\nolimits_{x \in (0,1)} |v(x)| \colon \big[v \in \mathcal{C}((0,1), \R) \text{ and } \|v\|_{\W^{\beta, p}((0,1), \mathbb{R})} \leq 1\big] \Big\}\Big)\Big]^2  \left( \E \! \left[\big\| \und{\mathcal{O}_t^n} \big\|_{\W^{\beta, p}((0,1), \mathbb{R})}^p \right]\right)^{\!\nicefrac{2}{p}}.
\end{split}
\end{align}
Moreover, observe that
\begin{align}\label{kura:phi(w)_all}
\begin{split}
&\E \! \left[\big\| \und{\mathcal{O}_t^n} \big\|_{\W^{\beta, p}((0,1), \mathbb{R})}^p \right] = \E \! \left[ \int_0^1 |\und{\mathcal{O}_t^n} (x)|^p \, dx + \int_0^1 \int_0^1 \frac{|\und{\mathcal{O}_t^n} (x)- \und{\mathcal{O}_t^n}(y)|^p}{|x-y|^{1+ \beta p}} \, dx \, dy\right]\\
& = \E \big[|Y|^p \big] \int_0^1 \left( \E \! \left[ |\und{\mathcal{O}_t^n} (x)|^2 \right] \right)^{\nicefrac{p}{2}} dx  + \E \big[|Y|^p \big] \int_0^1 \int_0^1 \frac{\left( \E \! \left[ |\und{\mathcal{O}_t^n} (x) - \und{\mathcal{O}_t^n}(y) |^2 \right] \right)^{\nicefrac{p}{2}}}{|x-y|^{1+ \beta p}}\,  dx \, dy.
\end{split}
\end{align}
Next note that Lemma~\ref{lem:o_expansion} ensures that for all $ x \in (0,1)$ it holds that
\begin{align}
\begin{split}
 & \E \! \left[ |\und{\mathcal{O}_t^n}(x)|^2 \right] = \E \! \left[ \left| \smallsum\limits_{k=-n}^{n} \und{e_k} (x) \int_0^t e^{-\lambda_k(t-s)} \, \big( b_k \, d \! \left<  e_k, W_s \right>_H + \tilde{b}_k \, d \! \left<  e_{-k}, W_s\right>_H \big) \right|^2 \right] \\
 & \leq \sum_{k=-n}^{n} \frac{|\und{e_k}(x)|^2 \,  \big[ |b_k|^2 +   |\tilde{b}_{k} |^2 \big]}{ \lambda_k} \leq 2  \sum_{k=-n}^{n} \frac{ |b_k|^2 +   |\tilde{b}_{k} |^2}{ \lambda_k}.
\end{split}
\end{align}
This yields that 
\begin{align}\label{kura:phi(w)_1}
\int_0^1 \left( \E \! \left[ |\und{\mathcal{O}_t^n}(x)|^2 \right] \right)^{\nicefrac{p}{2}} dx \leq 2^{\nicefrac{p}{2}} \left[\sum_{k=-n}^{n} \frac{|b_k|^2 +|\tilde{b}_k|^2}{\lambda_k}  \right]^{\nicefrac{p}{2}}.
\end{align}
Moreover, Lemma~\ref{lem:o_expansion}  proves that for all  $ x, y \in (0,1)$ it holds that
\begin{align}\label{eq:o_diff}
\begin{split}
 &\E \! \left[ |\und{\mathcal{O}_t^n} (x) - \und{\mathcal{O}_t^n}(y) |^2 \right] \\
 &=  \E \! \left[ \left| \smallsum\limits_{k=-n}^{n} \left[\und{e_k}(x) - \und{e_k}(y)\right] \int_0^t e^{-\lambda_k(t-s)} \, \big( b_k \, d \! \left<  e_k, W_s \right>_H + \tilde{b}_k \, d \! \left<  e_{-k}, W_s\right>_H \big) \right|^2 \right] \\
 &\leq  \sum_{k=-n}^{n} \frac{|\und{e_k}(x) - \und{e_k}(y)|^2 \, |b_k|^2 + |\und{e_k}(x) - \und{e_k}(y)|^2 \,  | \tilde{b}_{k} |^2 }{ \lambda_k}.
\end{split}
\end{align}
In addition, the assumption that $\beta < \nicefrac{1}{2}$ and  the fact that $\forall \, x, y \in \R \colon \max\{|\sin(x)-\sin(y)|, |\cos(x)- \cos(y)|\} \leq |x-y|$ ensure for all $x, y \in (0,1)$, $k \in \Z$ that
\begin{align}
\begin{split}
&|\und{e_k}(x) - \und{e_k}(y)|^2  \leq 2 \max\!\big\{|\sin(2k \pi x) - \sin (2k\pi y) |^2, |\cos(2k \pi x) - \cos (2k\pi y) |^2  \big\} \\
& \leq  2^{3-4\beta}\max\!\big\{|\sin(2k \pi x) - \sin (2k\pi y) |^{4\beta}, |\cos(2k \pi x) - \cos (2k\pi y) |^{4\beta}  \big\} \leq 2^3 \, |k \pi|^{4 \beta} \, |x-y|^{4 \beta} .
\end{split}
\end{align}
Combining this with \eqref{eq:o_diff} proves for all $x, y \in (0, 1)$ that
\begin{align}
\E \! \left[ |\und{\mathcal{O}_t^n} (x) - \und{\mathcal{O}_t^n}(y) |^2 \right] \leq 2^3 \, |x-y|^{4 \beta}  \sum_{k=-n}^{n} \frac{ |k \pi|^{4 \beta} \, \big(|b_k|^2 +   | \tilde{b}_{k} |^2 \big)}{ \lambda_k}.
\end{align}
This and the assumption that $\beta p > 1$ yield that
\begin{align}\label{kura:phi(w)_2}
\begin{split}
 &\int_0^1 \int_0^1 \frac{\left( \E \! \left[ |\und{\mathcal{O}_t^n} (x) - \und{\mathcal{O}_t^n}(y) |^2 \right] \right)^{\nicefrac{p}{2}}}{|x-y|^{1+ \beta p}} \,  dx \, dy \\
 &\leq 2^{\nicefrac{3p}{2}} \! \left[ \sum_{k=-n}^{n} \frac{|k \pi|^{4 \beta} \,  \big(|b_k|^2 + | \tilde{b}_{k} |^2 \big) }{\lambda_k}\right]^{\nicefrac{p}{2}} \int_0^1 \int_0^1 |x-y|^{\beta p -1} \, dx \, dy  \\
 & \leq 2^{\nicefrac{3p}{2}} \! \left[ \sum_{k=-n}^{n}\frac{|k \pi|^{4 \beta} \,  \big(|b_k|^2 +  | \tilde{b}_{k} |^2 \big)}{\lambda_k}\right]^{\nicefrac{p}{2}}.
\end{split}
\end{align}
Combining \eqref{kura:phi(w)_all}, \eqref{kura:phi(w)_1}, and the fact that $\forall \, x, y \in \R \colon  |x+y|^{\nicefrac{2}{p}} \leq |x|^{\nicefrac{2}{p}} + |y|^{\nicefrac{2}{p}}$ hence shows that
\begin{align}\label{kura:phi(w)_last}
\begin{split}
 &\left( \E \! \left[\big\| \und{\mathcal{O}_t^n} \big\|_{\W^{\beta,p}((0,1), \mathbb{R})}^p \right]\right)^{\nicefrac{1}{p}} \\
  & \leq \big( \E \big[|Y|^p \big] \big)^{\nicefrac{1}{p}} \left\{ 2^{\nicefrac{p}{2}} \! \left[ \sum_{k=-n}^{n} \frac{ |b_k|^2 +   | \tilde{b}_{k} |^2 }{\lambda_k}\right]^{\nicefrac{p}{2}} +  2^{\nicefrac{3p}{2}}\! \left[ \sum_{k=-n}^{n} \frac{|k \pi|^{4 \beta} \big(|b_k|^2 +  | \tilde{b}_{k} |^2 \big) }{\lambda_k}\right]^{\nicefrac{p}{2}} \right\}^{\!\nicefrac{1}{p}} \\
 & \leq \sqrt{10} \,\big( \E \big[|Y|^p \big] \big)^{\nicefrac{1}{p}} \left[ \sum_{k=-n}^{n} \frac{ \max \{|k \pi|^{4 \beta}, 1 \} \big(|b_k|^2 +   | \tilde{b}_{k} |^2 \big)}{\lambda_k}\right]^{\nicefrac{1}{2}}.
\end{split}
\end{align}
Next observe that  the Sobolev embedding theorem and the assumption that $\beta p > 1$ ensure that
\begin{align}
\sup \! \Big(\Big\{ \! \sup\nolimits_{x \in (0,1)} |v(x)| \colon \big[v \in \mathcal{C}((0,1), \R) \text{ and } \|v\|_{\W^{\beta, p}((0,1), \mathbb{R})} \leq 1\big] \Big\}\Big) < \infty.
\end{align}
Combining this with \eqref{kura:phi(w)_eq1} and \eqref{kura:phi(w)_last}  establishes \eqref{kura:norm:eq}. The proof of Lemma~\ref{kura:phi(w)_finite} is thus completed.
\end{proof}

%%%%%%%%%%%%%%%%%%%%%%%%%%%%%%%%%%%%%%%%%%%%%%%%%%%%%%%%%%%

\begin{lemma}\label{lem:exp}
Let $a\in (0, \infty)$, $x, r \in [0, \infty)$. Then $x^r \leq a^{-r} (\lf r \rf_1 +1)! \, e^{ax}$.
\end{lemma}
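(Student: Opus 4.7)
My plan is to reduce the claimed inequality to the trivial bound $y^{n+1} \le (n+1)!\, e^y$ that comes from the Taylor expansion of the exponential. Setting $y := ax \ge 0$ and $n := \lf r \rf_1$, the inequality is equivalent to showing $y^r \le (n+1)!\, e^y$, and by definition of $\lf \cdot \rf_1$ one has $n \in \N_0$ and $n \le r \le n+1$.

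I would then split the argument into two cases according to whether $y \le 1$ or $y > 1$. In the case $y \le 1$, I use $r \ge 0$ to obtain $y^r \le 1$, and combine this with the trivial bounds $(n+1)! \ge 1$ and $e^y \ge 1$ to conclude $y^r \le (n+1)!\, e^y$. In the case $y > 1$, I use $r \le n+1$ to get $y^r \le y^{n+1}$, and then invoke the elementary inequality
\begin{equation*}
y^{n+1} \;\le\; (n+1)!\, \sum_{k=0}^{\infty} \tfrac{y^{k}}{k!} \;=\; (n+1)!\, e^{y},
\end{equation*}
which follows from dropping all but the $k=n+1$ term in the series expansion of $e^y$. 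The two cases combine to $y^r \le (n+1)!\, e^y$, and substituting $y=ax$ and multiplying by $a^{-r}$ produces the claim $x^r \le a^{-r}(\lf r \rf_1 + 1)!\, e^{ax}$.

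I do not anticipate any real obstacle here: the only subtle point is that $r$ is an arbitrary nonnegative real (not an integer), which is handled by the monotonicity of $y \mapsto y^r$ together with the case split on whether $y\le 1$ or $y>1$. Everything else is just the standard Taylor-series lower bound for $e^y$.
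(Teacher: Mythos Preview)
Your proposal is correct and takes essentially the same approach as the paper. The paper condenses your explicit case split $y \le 1$ versus $y > 1$ into the single inequality $1 + (ax)^{\lf r\rf_1+1} \ge (ax)^r$, writing the whole proof as the one-line chain $e^{ax} \ge 1 + \tfrac{(ax)^{n+1}}{(n+1)!} \ge \tfrac{1+(ax)^{n+1}}{(n+1)!} \ge \tfrac{(ax)^r}{(n+1)!}$ with $n=\lf r\rf_1$.
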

\begin{proof}[Proof of Lemma~\ref{lem:exp}.]
Note that
\begin{align}
e^{ax} \geq 1+ \frac{|ax|^{\lf r \rf_1 +1}}{(\lf r \rf_1 +1)!} \geq \frac{1 +|ax|^{\lf r \rf_1 +1}}{(\lf r \rf_1 +1)!} \geq \frac{|ax|^r}{(\lf r \rf_1 +1)!}.
\end{align}
The proof of Lemma~\ref{lem:exp} is thus completed. 
\end{proof}

\begin{cor}\label{kura:cor}

Assume the setting in Section~\ref{setting:kura} and let $ \phi , \Phi \in \mathcal{M}( \mathcal{B}(H_{1}) , \mathcal{B}([0,\infty)))$,  $p \in (\nicefrac{1}{\beta}, \infty)$,  $\varepsilon \in (0, \infty)$ satisfy for all $ v \in H_{1}$ that  $\phi(v) = \frac{3\vt}{2} + \tfrac{\kappa^4}{16} [1 + \nicefrac{1}{\varepsilon}]^2 + \varepsilon \!   \left[\sup\nolimits_{x \in (0,1)} |\und{v}(x)|^2 \right]$, $\Phi(v)= \frac{\vt}{2} \|v \|_H^2 +  \frac{1}{4} \| v^2 \|_H^2$, and
\begin{align}\label{kura:vareps}
\begin{split}
\varepsilon &\leq  \tfrac{1}{7200  p^3 T} \left[ \max \! \left\{ 1, \smallsum\limits_{k \in \mathbb{Z}} \tfrac{  \max \{|k \pi|^{4 \beta}, 1 \} (|b_k|^2 +  | \tilde{b}_{k} |^2 )}{\lambda_k} \right\} \right]^{-1} \\
& \quad  \cdot \Big[\sup \!\Big(\Big\{ \sup\nolimits_{x \in (0,1)} |v(x)| \colon \big[v \in \mathcal{C}((0,1), \R) \text{ and } \|v\|_{\W^{\beta, p}((0,1), \mathbb{R})} \leq 1\big] \Big\}\Big)\Big]^{-2}.
\end{split}
\end{align}
Then it holds that $\limsup_{m \to \infty}  \sup_{s \in [0,T]} \E \! \left[\|  \mathcal{O}_s^m  + P_m  e^{sA}  \xi  \|_H^p \right] < \infty$ and 
 \begin{align}
 \limsup_{m \to \infty}   \E \! \left[ \int_0^T \exp \!\left( \smallint_r^T p \, \phi\big(\mathcal{O}_{\lf u \rf_{h_m}}^m + P_m  e^{\lf u \rf_{h_m} A} \xi\big) \, du \right) \max \! \left\{1, \big| \Phi\big( \mathcal{O}_{ \lf r \rf_{h_m } }^m + P_m  e^{\lf r \rf_{h_m} A} \xi\big)\big|^{\nicefrac{p}{2}} \right\} dr \right] < \infty. 
 \end{align}
\end{cor}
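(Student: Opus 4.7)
The plan is to treat the two assertions separately, with the second being the substantial one.

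For the first assertion, I observe that for each fixed $s$ and $m$ the random element $\mathcal{O}_s^m$ is a centered Gaussian vector in the finite-dimensional space $P_m(H)$ (it is a stochastic integral of a deterministic operator-valued integrand against a cylindrical Wiener process). Lemma~\ref{lem:o_expansion} applied coordinatewise gives an $m$-independent bound on $\E[\|\mathcal{O}_s^m\|_H^2]$ via the assumption $\sum_k (|b_k|^2+|\tilde b_k|^2)|k|^{4\beta-4} < \infty$ (since $\beta < 1/2$ implies $\lambda_k^{-1} \lesssim \max\{1, |k|^{4\beta-4}\}$). Gaussian hypercontractivity then converts the second moment into a uniform $L^p$ bound, and the triangle inequality together with $\|P_m e^{sA}\xi\|_H \leq \|\xi\|_H$ finishes the first claim.

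For the second assertion, I would first peel off the deterministic contributions in $\phi$. Writing $v_u := \mathcal{O}_{\lf u \rf_{h_m}}^m + P_m e^{\lf u \rf_{h_m} A}\xi$, the exponent decomposes as
\begin{equation*}
\smallint_r^T p\,\phi(v_u)\,du = pT\bigl[\tfrac{3\vt}{2}+\tfrac{\kappa^4}{16}(1+\tfrac{1}{\varepsilon})^2\bigr] + p\varepsilon\smallint_r^T \sup\nolimits_{x\in(0,1)}|\und{v_u}(x)|^2\,du,
\end{equation*}
the first summand being deterministic and finite. Using $(a+b)^2\leq 2a^2+2b^2$ and the Sobolev embedding $H_{1/4}\hookrightarrow \mathcal{C}([0,1],\R)$ (which produces a deterministic constant $C_\xi$ bounding $\sup_x|\und{P_m e^{sA}\xi}(x)|^2$ uniformly in $m,s$), I can further reduce to estimating the expectation of $\exp\bigl(2p\varepsilon \smallint_r^T \sup_x|\und{\mathcal{O}_{\lf u \rf_{h_m}}^m}(x)|^2\,du\bigr)$ times the $\Phi$ factor.

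For the $\max\{1,|\Phi(v_r)|^{p/2}\}$ factor, I would use $\|v^2\|_H^2 = \|v\|_{L^4(\lambda_{(0,1)};\R)}^4\leq \sup_x|\und v(x)|^2\|v\|_H^2$ to bound $|\Phi(v_r)|^{p/2}$ by a polynomial in $\sup_x|\und{v_r}(x)|$, then invoke Lemma~\ref{lem:exp} with auxiliary parameter $a>0$ to replace this polynomial by $C(p,a)\exp(a\,\sup_x|\und{v_r}(x)|^2)$. After pulling the exponent inside the $u$-integral via Jensen's inequality
\begin{equation*}
\exp\!\Bigl(\smallint_r^T g(u)\,du\Bigr) \leq \tfrac{1}{T-r}\smallint_r^T \exp((T-r)g(u))\,du,
\end{equation*}
everything reduces to estimating $\E[\exp(c\,\sup_x|\und{\mathcal{O}_u^m}(x)|^2)]$ uniformly in $u,m$ for a constant $c$ of the form $2p^2 T\varepsilon + 2a$ (times a numerical factor).

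The closing move is Fernique's theorem. The random field $\und{\mathcal{O}_u^m}$ is a Gaussian element of the separable Banach space $\mathcal{C}([0,1],\R)$ (being a finite combination of trigonometric functions with centered Gaussian coefficients), so Lemma~\ref{lemma:fernique} applies. I would choose $r := 10\,\E[\sup_x|\und{\mathcal{O}_u^m}(x)|^2]$, for which Markov's inequality gives $\P(\sup_x|\und{\mathcal{O}_u^m}(x)|^2 > r)\leq 1/10$, and Lemma~\ref{kura:phi(w)_finite} supplies an explicit $m$-independent bound on $r$ in terms of the sum $\sum_k \max\{|k\pi|^{4\beta},1\}(|b_k|^2+|\tilde b_k|^2)/\lambda_k$ and the Sobolev constant $C_* := \sup\{\sup_x|v(x)|:\|v\|_{\W^{\beta,p}}\leq 1\}$. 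The calibration \eqref{kura:vareps} of $\varepsilon$ is designed precisely so that $c\leq 1/(18r)$, yielding a bound of at most $13$ from Fernique. The principal obstacle is tracking the polynomial-in-$p$ dependence: Lemma~\ref{kura:phi(w)_finite} contributes a factor $\E[|Y|^p]^{2/p}\sim p$, Jensen's inequality contributes another factor $T$, and the auxiliary constant $a$ in Lemma~\ref{lem:exp} must be chosen small enough (likely $\sim 1/p$) so that the $(2p+1)!\,a^{-2p}$ factor remains manageable while keeping $2a$ below the Fernique threshold; combined, these produce the $p^3 T$ scaling in \eqref{kura:vareps}.
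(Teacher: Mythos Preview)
Your proposal follows essentially the same route as the paper: both arguments rest on Fernique's theorem (Lemma~\ref{lemma:fernique}), the uniform variance bound of Lemma~\ref{kura:phi(w)_finite}, the polynomial-to-exponential conversion Lemma~\ref{lem:exp}, Jensen's inequality for integrals of exponentials, and the Sobolev embedding $H_{1/4}\hookrightarrow\mathcal{C}([0,1],\R)$ to absorb the deterministic $P_m e^{sA}\xi$ contributions. For the first assertion your direct use of Gaussian hypercontractivity on the second-moment bound is a slightly more economical variant of what the paper does (the paper instead reuses the Fernique exponential moment together with Lemma~\ref{lem:exp}).

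One step you leave implicit is worth flagging. The paper begins the second assertion with a Cauchy--Schwarz split
\[
\E\!\left[\exp\!\Big(\smallint_0^T p\,\phi(v_u)\,du\Big)\max\{1,|\Phi(v_r)|^{p/2}\}\right]
\le \sqrt{\E\!\left[\exp\!\Big(\smallint_0^T 2p\,\phi(v_u)\,du\Big)\right]}\,\sqrt{\E\!\left[1+|\Phi(v_r)|^{p}\right]},
\]
which decouples the $\phi$-integral (living on $[r,T]$) from the $\Phi$-term (living at the single time $\lfloor r\rfloor_{h_m}$). Your phrase ``everything reduces to estimating $\E[\exp(c\sup_x|\und{\mathcal{O}_u^m}(x)|^2)]$'' hides this: after Lemma~\ref{lem:exp} you have a product of an exponential at time $\lfloor r\rfloor_{h_m}$ with an integral of exponentials over $u\in[r,T]$, and these do not merge into a single exponent without a H\"older step. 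Once you insert that decoupling the rest of your outline goes through, and the doubling of the exponent is exactly why the paper's Fernique call uses $4pT\varepsilon$ rather than $2pT\varepsilon$.
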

\begin{proof}[Proof of Corollary~\ref{kura:cor}.]
First, note that Markov's inequality, e.g., Lemma~4.7 in~\cite{hj11}, Lemma~\ref{kura:phi(w)_finite}, and \eqref{kura:vareps} imply  for all $m \in \N$, $t \in [0,T]$  that
\begin{align}
\begin{split}
& \P \! \left( \sup\nolimits_{x \in (0,1)} | \und{\mathcal{O}_t^m}(x) |^2 \geq \frac{1}{72 p  T  \varepsilon} \right) \\
&\leq 72  p T  \varepsilon \,  \E \! \left[\sup\nolimits_{x \in (0,1)} | \und{\mathcal{O}_t^m}(x) |^2  \right]    \\
& \leq 720   p^3  T  \varepsilon \Big[\sup \! \Big(\Big\{ \! \sup\nolimits_{x \in (0,1)} |v(x)| \colon \big[v \in \mathcal{C}((0,1), \R) \text{ and } \|v\|_{\W^{\beta, p}((0,1), \mathbb{R})} \leq 1\big] \Big\}\Big)\Big]^2 \\
& \quad \cdot \left[ \smallsum\limits_{k =-m}^m \tfrac{ \max \{|k \pi|^{4 \beta}, 1 \} (|b_k|^2 +   | \tilde{b}_{k} |^2 )}{\lambda_k}\right] \leq \tfrac{1}{10}.
\end{split}
\end{align}
Lemma~\ref{lemma:fernique} hence shows that for all $m \in \N$, $t \in [0, T]$ it holds that
\begin{align}\label{kura:prop:phi(w)}
\E \! \left[ \exp \! \left(4 p T \varepsilon  \Big\{ \! \sup\nolimits_{x \in (0,1)} | \und{\mathcal{O}_t^m}(x) |^2\Big\} \right)\right] \leq 13.
\end{align}
Moreover, H\"olders inequality ensures for all $ r \in [0,T]$, $m \in \N$ that
\begin{align}\label{kura:cor:2}
\begin{split}
& \E \!\left[ \exp \! \left( \smallint_r^T p \, \phi\big(\mathcal{O}_{\lf u \rf_{h_m}}^m + P_m  e^{\lf u \rf_{h_m} A} \xi\big) \, du \right) \max \! \left\{1, \big| \Phi\big( \mathcal{O}_{ \lf r \rf_{h_m } }^m + P_m  e^{\lf r \rf_{h_m} A} \xi\big)\big|^{\nicefrac{p}{2}} \right\}\right] \\
&\leq \E \!\left[ \exp \! \left( \smallint_0^T p \, \phi\big(\mathcal{O}_{\lf u \rf_{h_m}}^m + P_m  e^{\lf u \rf_{h_m} A} \xi\big) \, du \right) \max \! \left\{1, \big| \Phi\big( \mathcal{O}_{ \lf r \rf_{h_m } }^m + P_m  e^{\lf r \rf_{h_m} A} \xi\big) \big|^{\nicefrac{p}{2}} \right\}\right] \\
& \leq \sqrt{\E \!\left[ \exp \!\left( \smallint_0^T  2p \,\phi\big(\mathcal{O}_{\lf u \rf_{h_m}}^m +  P_m  e^{\lf u \rf_{h_m} A} \xi\big) \, du \right) \right] \E \! \left[1+ \big| \Phi\big( \mathcal{O}_{ \lf r \rf_{h_m } }^m + P_m  e^{\lf r \rf_{h_m} A} \xi\big) \big|^p \right]}.
\end{split}
\end{align}
Next note that the fact that $ \forall \, x, y \in \R \colon (x+y)^2 \leq 2 x^2 + 2y^2$ yields that for all $m \in \N$ it holds that
\begin{align}\label{eq:kura:phi}
\begin{split}
& \E \!\left[ \exp \! \left( \smallint_0^T  2p \,\phi\big(\mathcal{O}_{\lf u \rf_{h_m}}^m +  P_m  e^{\lf u \rf_{h_m} A} \xi\big) \, du \right) \right] \\
&= \E \!\left[ \exp \! \left( \smallint_0^T  3p\vt +  \tfrac{p \kappa^4}{8} [1 + \nicefrac{1}{\varepsilon}]^2 +  2p\varepsilon  \Big\{ \!\sup\nolimits_{x \in (0,1)} \big|\und{\mathcal{O}_{\lf u \rf_{h_m}}^m +  P_m e^{\lf u \rf_{h_m} A} \xi}(x)\big|^2 \Big\} \, du \right) \right] \\
& \leq \exp \!\left(   3pT\vt + \tfrac{p T \kappa^4}{8} [1 + \nicefrac{1}{\varepsilon}]^2 + 4p\varepsilon \smallint_0^T  \Big\{ \!\sup\nolimits_{x \in (0,1)} |\und{ P_m  e^{\lf u \rf_{h_m} A} \xi}(x)|^2 \Big\} \, du \right) \\
& \quad \cdot \E \!\left[ \exp \! \left( \smallint_0^T    4p\varepsilon  \Big\{ \!\sup\nolimits_{x \in (0,1)} \big|\und{\mathcal{O}_{\lf u \rf_{h_m}}^m} \!(x)\big|^2 \Big\} \, du \right) \right].
\end{split}
\end{align}
In addition, observe that the triangle inequality and the fact that $ \forall \, x, y \in \R$, $a \in [1, \infty) \colon |x+y|^a \leq 2^{a-1} |x|^a + 2^{a-1} |y|^a$ show for all $r \in [0, T]$, $m \in \N$ that
\begin{align}
\begin{split}
& \E \! \left[ \big| \Phi\big( \mathcal{O}_{ \lf r \rf_{h_m } }^m + P_m  e^{\lf r \rf_{h_m} A} \xi\big) \big|^p \right]\\
&=  \E  \bigg[ \left| \tfrac{\vt}{2} \big\| \mathcal{O}_{ \lf r \rf_{h_m } }^m + P_m  e^{\lf r \rf_{h_m} A} \xi \big\|_H^2 + \tfrac{1}{4} \big\| \big(\mathcal{O}_{ \lf r \rf_{h_m } }^m + P_m  e^{\lf r \rf_{h_m} A} \xi\big)^2  \big\|_H^2 \right|^p \bigg] \\
& \leq  \E \! \left[ \tfrac{\vt^p}{2} \big\| \mathcal{O}_{ \lf r \rf_{h_m } }^m +  P_m  e^{\lf r \rf_{h_m} A} \xi \big\|^{2p}_H + \tfrac{1}{2^{p+1}} \big\|\big( \mathcal{O}_{ \lf r \rf_{h_m } }^m + P_m  e^{\lf r \rf_{h_m} A} \xi \big)^2 \big\|^{2p}_H \right] \\
& \leq  \E \! \left[ \tfrac{\vt^p}{2}  \Big\{ \!\sup\nolimits_{x \in (0,1)} \big|\und{\mathcal{O}_{\lf r \rf_{h_m}}^m +  P_m  e^{\lf r \rf_{h_m} A} \xi}(x)\big|^{2p} \Big\} + \tfrac{1}{2^{p+1}} \Big\{\!\sup\nolimits_{x \in (0,1)} \big|\und{\mathcal{O}_{\lf r \rf_{h_m}}^m +  P_m  e^{\lf r \rf_{h_m} A} \xi}(x)\big|^{4p} \Big\} \right] \\
& \leq    2^{2p-2}\vt^p \Big\{\!\sup\nolimits_{x \in (0,1)} |\und{ P_m  e^{\lf r \rf_{h_m} A} \xi}(x)|^{2p} \Big\} + 2^{3p-2} \Big\{\!\sup\nolimits_{x \in (0,1)} |\und{ P_m  e^{\lf r \rf_{h_m} A} \xi}(x)|^{4p} \Big\}  \\
& +  \E\! \left[ 2^{2p-2}\vt^p \Big\{\!\sup\nolimits_{x \in (0,1)} \big|\und{\mathcal{O}_{\lf r \rf_{h_m}}^m} \!(x)\big|^{2p} \Big\} + 2^{3p-2} \Big\{\!\sup\nolimits_{x \in (0,1)} \big|\und{\mathcal{O}_{\lf r \rf_{h_m}}^m }\!(x)\big|^{4p} \Big\} \right].
\end{split}
\end{align}
Lemma~\ref{lem:exp} hence proves for all $r \in [0, T]$, $m \in \N$ that
\begin{align}\label{eq:kura:Phi}
\begin{split}
& \E \! \left[ \big| \Phi\big( \mathcal{O}_{ \lf r \rf_{h_m } }^m + P_m  e^{\lf r \rf_{h_m} A} \xi\big) \big|^p \right] \\
&\leq     2^{2p-2}\vt^p \Big\{\!\sup\nolimits_{x \in (0,1)} |\und{ P_m  e^{\lf r \rf_{h_m} A} \xi}(x)|^{2p} \Big\}  + 2^{3p-2} \Big\{\!\sup\nolimits_{x \in (0,1)} |\und{ P_m  e^{\lf r \rf_{h_m} A} \xi}(x)|^{4p} \Big\} \\
& +  \E\! \left[ \left( \tfrac{2^{2p-2}\vt^p (\lf p \rf_1 +1)!}{ |4pT\varepsilon|^p} + \tfrac{2^{3p-2} (\lf 2p \rf_1 +1)! }{|4pT\varepsilon|^{2p}} \right)  \exp \! \left( 4pT\varepsilon \Big\{\!\sup\nolimits_{x \in (0,1)} |\und{\mathcal{O}_{\lf r \rf_{h_m}}^m }\!(x)|^{2} \Big\} \right)  \right].
\end{split}
\end{align}
Combining this with \eqref{kura:cor:2}, \eqref{eq:kura:phi}, and the fact that $\forall \, x, y \in [0, \infty) \colon \sqrt{x+y} \leq \sqrt{x} + \sqrt{y}$ ensures that for all $m \in \N$ it holds that
 \begin{align}\label{eq:kura:phi:Phi}
 \begin{split}
 & \E \!\left[ \int_0^T \exp \!\left( \smallint_r^T p \, \phi\big(\mathcal{O}_{\lf u \rf_{h_m}}^m + P_m  e^{\lf u \rf_{h_m} A} \xi\big) \, du \right) \max \! \left\{1, \big| \Phi\big( \mathcal{O}_{ \lf r \rf_{h_m } }^m + P_m  e^{\lf r \rf_{h_m} A} \xi\big)\big|^{\nicefrac{p}{2}} \right\} dr\right] \\
 & \leq   \exp \!\left(   \tfrac{3pT\vt}{2} + \tfrac{p T \kappa^4}{16} [1 + \nicefrac{1}{\varepsilon}]^2 + 2p\varepsilon \smallint_0^T \Big[\sup\nolimits_{x \in (0,1)} \big|\und{ P_m  e^{\lf u \rf_{h_m} A} \xi}(x)\big|^2 \Big] \, du \right) \\
 &  \cdot \sqrt{\E \!\left[ \exp \!\left( \smallint_0^T    4p\varepsilon \, \Big\{\!\sup\nolimits_{x \in (0,1)} \big|\und{\mathcal{O}_{\lf u \rf_{h_m}}^m}(x)\big|^2 \Big\} \, du \right)  \right]}\\
 & \cdot \Bigg( \int_0^T  1+ 2^{p-1}\vt^{\nicefrac{p}{2}} \Big\{ \!\sup\nolimits_{x \in (0,1)} |\und{ P_m  e^{\lf r \rf_{h_m} A} \xi}(x)|^{p} \Big \} + 2^{2p-1} \Big\{\!\sup\nolimits_{x \in (0,1)} |\und{ P_m  e^{\lf r \rf_{h_m} A} \xi}(x)|^{2p} \Big\} \, dr  \\
 & + \int_0^T  \sqrt{ \left( \tfrac{\vt^p (\lf p \rf_1 +1)!}{ 4|pT\varepsilon|^p} + \tfrac{ (\lf 2p \rf_1 +1)! }{2^{p+2} |pT\varepsilon|^{2p}} \right) \E\! \left[ \exp \! \left( 4pT\varepsilon  \Big\{ \!\sup\nolimits_{x \in (0,1)} \big|\und{\mathcal{O}_{\lf r \rf_{h_m}}^m }(x)\big|^{2} \Big\}  \right)  \right] } \, dr \Bigg).
 \end{split}
 \end{align}
Next note that, e.g., Lemma~2.22 in Cox et al.~\cite{CoxHutzenthalerJentzen2014}  and  \eqref{kura:prop:phi(w)} show that for all $m \in \N$ it holds that
\begin{equation}\label{kura:prop:jen}
\begin{split}
& \E \! \left[ \exp \! \left(  \smallint_0^T 4 p \varepsilon  \Big\{\!\sup\nolimits_{x \in (0,1)} \big|\und{\mathcal{O}_{\lf u \rf_{h_m}}^m} \! (x)\big|^2 \Big\} \,  du \right)\right] \\\
& \leq \frac{1}{T}  \int_0^T \E\! \left[ \exp \! \left(  4  p T \varepsilon \Big\{\!\sup\nolimits_{x \in (0,1)} \big|\und{\mathcal{O}_{\lf u \rf_{h_m}}^m}\!(x)\big|^2 \Big\} \right)\right] du \leq 13 .
\end{split}
\end{equation} 
Combining  \eqref{eq:kura:phi:Phi} with \eqref{kura:prop:phi(w)}  hence shows for all $m \in \N$ that
\begin{align}\label{eq:kura:phi:finite}
\begin{split}
& \E \!\left[ \int_0^T \exp \!\left( \smallint_r^T p \, \phi\big(\mathcal{O}_{\lf u \rf_{h_m}}^m + P_m  e^{\lf u \rf_{h_m} A} \xi\big) \, du \right) \max \!\left\{1, \big| \Phi\big( \mathcal{O}_{ \lf r \rf_{h_m } }^m + P_m  e^{\lf r \rf_{h_m} A} \xi\big)\big|^{\nicefrac{p}{2}} \right\} dr\right] \\
& \leq \sqrt{13} \exp \! \left(   \tfrac{3pT\vt}{2} + \tfrac{p T \kappa^4}{16} [1 + \nicefrac{1}{\varepsilon}]^2 + 2p\varepsilon \smallint_0^T \Big\{\!\sup\nolimits_{x \in (0,1)} |\und{ P_m  e^{\lf u \rf_{h_m} A} \xi}(x)|^2 \Big\} \, du \right) \\
& \cdot \bigg( T +  \int_0^T 2^{p-1}\vt^{\nicefrac{p}{2}} \Big\{\!\sup\nolimits_{x \in (0,1)} |\und{ P_m  e^{\lf r \rf_{h_m} A} \xi} (x)|^p \Big\} + 2^{2p-1} \Big\{\!\sup\nolimits_{x \in (0,1)} |\und{ P_m  e^{\lf r \rf_{h_m} A} \xi}(x)|^{2p} \Big\} \, dr \\
&   + T\sqrt{13\left( \tfrac{\vt^p (\lf p \rf_1 +1)!}{4 |pT\varepsilon|^p} + \tfrac{ (\lf 2p \rf_1 +1)! }{2^{p+2} |pT\varepsilon|^{2p}} \right) } \bigg).
\end{split}
\end{align}
Moreover, the Sobolev embedding theorem implies that 
\begin{align}
\sup \! \Big(\Big\{\! \sup\nolimits_{x \in (0,1)} |\und{v}(x)| \colon \big[ v \in H_{1/4} \text{ and } \|v\|_{H_{1/4}} \leq 1\big] \Big\}\Big) < \infty.
\end{align}
This yields for all $s \in [0, T]$, $m \in \N$ that
\begin{align}
\begin{split}
&\sup\nolimits_{x \in (0,1)} | \und{P_m  e^{sA} \xi} (x) | \\
&\leq  \Big[\sup \! \Big(\Big\{ \!\sup\nolimits_{x \in (0,1)} |\und{v}(x)| \colon \big[v \in H_{1/4} \text{ and } \|v\|_{H_{1/4}} \leq 1\big] \Big\}\Big) \Big]  \|P_m  e^{sA}  \xi \|_{H_{1/4}}  \\
& \leq \Big[\sup \! \Big(\Big\{ \sup\nolimits_{x \in (0,1)} |\und{v}(x)| \colon \big[v \in H_{1/4} \text{ and } \|v\|_{H_{1/4}} \leq 1\big] \Big\}\Big) \Big] \| \xi \|_{H_{1/4}} < \infty.
\end{split}
\end{align}
Combining this with \eqref{eq:kura:phi:finite} implies that
\begin{align}\label{eq:kura:last}
\limsup_{m \to \infty}   \E \! \left[ \int_0^T \exp \! \left( \smallint_r^T p \, \phi\big(\mathcal{O}_{\lf u \rf_{h_m}}^m + P_m  e^{\lf u \rf_{h_m} A} \xi\big) \, du \right) \max \! \left\{1, \big| \Phi\big( \mathcal{O}_{ \lf r \rf_{h_m } }^m + P_m  e^{\lf r \rf_{h_m} A} \xi\big)\big|^{\nicefrac{p}{2}} \right\} dr \right] < \infty.
\end{align}
In the next step observe that Lemma~\ref{lem:exp} and \eqref{kura:prop:phi(w)} prove that for all $m \in \N$, $s \in [0, T]$ it holds that 
\begin{align}
\begin{split}
\E \big[\|  \mathcal{O}_s^m \|_H^p \big]  \leq \E \Big[\sup\nolimits_{x \in (0,1)} |\und{\mathcal{O}_{s}^m }(x)|^{p} \Big] &\leq \tfrac{(\lf p/2 \rf_1+1)! }{|4pT\varepsilon|^{p/2}} \, \E\! \left[ \exp \! \left( 4pT\varepsilon \Big\{\!\sup\nolimits_{x \in (0,1)} |\und{\mathcal{O}_{s}^m }(x)|^{2} \Big\} \right)\right] \\
& \leq \tfrac{13 \, (\lf p/2 \rf_1+1)! }{2^p |pT\varepsilon|^{p/2}}.
\end{split}
\end{align}
The triangle inequality and the fact that $\forall \, x, y \in \R \colon |x+y|^p \leq 2^{p-1} |x|^p + 2^{p-1} |y|^p$ hence show that
\begin{align}
\begin{split}
\limsup_{m \to \infty}  \sup_{s \in [0,T]} \E \big[\|  \mathcal{O}_s^m  + P_m  e^{sA}  \xi  \|_H^p \big] &\leq  2^{p-1} \limsup_{m \to \infty} \bigg( \sup_{s \in [0,T]}  \E  \big[\|  \mathcal{O}_s^m \|_H^p \big] +  \sup_{s \in [0,T]} \|P_m  e^{sA}  \xi\|_H^p \bigg)\\
& \leq \tfrac{13 \, (\lf p/2 \rf_1+1)! }{2\, |pT\varepsilon|^{p/2}}  + 2^{p-1} \|\xi \|_H^p< \infty.
\end{split}
\end{align}
Combining this with \eqref{eq:kura:last} completes the proof of Corollary~\ref{kura:cor}.
\end{proof}

%%%%%%%%%%%%%%%%%%%%%%%%%%%%%%%%%%%%%%%%%%%%%%%%%%%%%%%
\begin{lemma}\label{lemma:conv:rate}
Assume the setting in Section~\ref{setting:kura}, let  $p \in [2, \infty)$, $n \in \N$, $\varepsilon \in [0, \nicefrac{\beta}{2} -\varrho)$, let $O \colon  [0, T] \times \Omega \to H_{\varrho}$  be a  stochastic process, and assume for all $t \in [0, T]$ that $ [O_t ]_{\P, \mathcal{B}(H)} =  \int_0^t e^{(t-s)A} \, B \, dW_s$. Then 
\begin{align}
\sup_{t \in [0, T]} \|O_t  - \mathcal{O}_t^n \|_{\mathcal{L}^p(\P; H_{\varrho})} \leq \left[\tfrac{p(p-1) T^{ ( \beta - 2\varrho - 2\varepsilon  ) }}{2( \beta - 2\varrho - 2\varepsilon  )} \right]^{\nicefrac{1}{2}} \|B \|_{HS(H, H_{(\beta-1)/2})} \,  n^{- 4\varepsilon }.
\end{align}
\end{lemma}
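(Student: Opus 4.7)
The plan is to rewrite $O_t - \mathcal{O}_t^n$ as a single stochastic integral, invoke a Burkholder-type moment inequality to reduce to a deterministic integral of Hilbert--Schmidt norms, and then estimate the integrand by a clean operator factorisation. Concretely, the definitions of $O$ and $\mathcal{O}^n$ together with linearity of the stochastic integral give, for every $t \in [0,T]$,
\begin{equation*}
 [O_t - \mathcal{O}_t^n]_{\P, \mathcal{B}(H_\varrho)} = \int_0^t (\Id_H - P_n)\, e^{(t-s)A} B \, dW_s \quad \text{in } \mathcal{L}^2(\P; H_\varrho).
\end{equation*}
Since the integrand is deterministic (and $(\Id_H - P_n)e^{rA} B \in HS(H, H_\varrho)$ by the smoothing properties of the semigroup), I would apply the Burkholder--Davis--Gundy / It\^o moment inequality for Hilbert-space valued stochastic integrals (e.g.\ a Da Prato--Zabczyk-type estimate; for $p \geq 2$) to obtain
\begin{equation*}
  \| O_t - \mathcal{O}_t^n \|_{\mathcal{L}^p(\P; H_\varrho)}^2 \leq \tfrac{p(p-1)}{2} \int_0^t \| (\Id_H - P_n)\, e^{(t-s)A} B \|_{HS(H, H_\varrho)}^2 \, ds.
\end{equation*}

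The main analytic work is estimating the integrand. Using that $P_n$ commutes with $A$ (since $(e_k)_{k \in \Z}$ diagonalises $A$) and the ideal property of the HS norm, I would factor
\begin{equation*}
  (-A)^\varrho (\Id_H - P_n)\, e^{rA} B = \bigl[(-A)^{\varrho + (1-\beta)/2 + \varepsilon} e^{rA}\bigr] \,\bigl[(-A)^{-\varepsilon}(\Id_H - P_n)\bigr]\, \bigl[(-A)^{(\beta-1)/2} B\bigr],
\end{equation*}
which yields
\begin{equation*}
 \|(\Id_H - P_n) e^{rA} B\|_{HS(H,H_\varrho)} \leq \|(-A)^{\varrho + (1-\beta)/2 + \varepsilon}e^{rA}\|_{L(H)}\, \|(-A)^{-\varepsilon}(\Id_H-P_n)\|_{L(H)}\, \|B\|_{HS(H, H_{(\beta-1)/2})}.
\end{equation*}
The constraints $\varrho \in (\nicefrac{1}{16}, \nicefrac{\beta}{2})$, $\varepsilon \in [0, \nicefrac{\beta}{2} - \varrho)$, $\beta < \nicefrac{1}{2}$ force the exponent $\varrho + (1-\beta)/2 + \varepsilon$ to lie in $(0, \nicefrac{1}{2})$, so the standard estimate (e.g.\ Theorem~4.7.6 in \cite{Jentzen2015SPDElecturenotes}) gives $\|(-A)^{\varrho + (1-\beta)/2 + \varepsilon}e^{rA}\|_{L(H)} \leq r^{-(\varrho + (1-\beta)/2 + \varepsilon)}$. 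For the spectral factor, the elementary bound $\lambda_k = 16 k^4 \pi^4 - 4 k^2 \pi^2 + \vt \geq k^4$ for all $|k| \geq 1$ (since $16\pi^4 > 1$) delivers $\|(-A)^{-\varepsilon}(\Id_H - P_n)\|_{L(H)} \leq \lambda_{n+1}^{-\varepsilon} \leq n^{-4\varepsilon}$.

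Combining these estimates and computing the elementary integral
\begin{equation*}
 \int_0^t (t-s)^{-(2\varrho + 1 - \beta + 2\varepsilon)}\, ds = \tfrac{t^{\beta - 2\varrho - 2\varepsilon}}{\beta - 2\varrho - 2\varepsilon}
\end{equation*}
(which is finite because $2\varrho + 2\varepsilon < \beta$), then taking square roots and the supremum over $t \in [0, T]$, yields the stated bound with the displayed constant. The only slightly delicate points are verifying that the exponent $\varrho + (1-\beta)/2 + \varepsilon$ falls in the range where the semigroup estimate has constant $1$, and isolating the $n^{-4\varepsilon}$ rate via the crude eigenvalue lower bound $\lambda_k \geq k^4$; both are routine once the decomposition above is written down, so no step presents a genuine obstacle.
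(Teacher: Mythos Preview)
Your proposal is correct and follows essentially the same route as the paper: both apply the Burkholder--Davis--Gundy inequality from Da Prato--Zabczyk to reduce to a deterministic Hilbert--Schmidt integral, then factor the integrand as $\|(-A)^{-\varepsilon}(\Id_H - P_n)\|_{L(H)} \cdot \|(-A)^{\varrho + (1-\beta)/2 + \varepsilon} e^{sA}\|_{L(H)} \cdot \|B\|_{HS(H, H_{(\beta-1)/2})}$, bound the first factor by $\lambda_{n+1}^{-\varepsilon} \leq n^{-4\varepsilon}$, the second by $s^{-(\varrho + (1-\beta)/2 + \varepsilon)}$, and integrate. Your explicit verification that the exponent lies in $(0,\nicefrac{1}{2})$ is a nice touch the paper leaves implicit, but otherwise the arguments are the same.
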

\begin{proof}[Proof of Lemma~\ref{lemma:conv:rate}.]
First, observe that the Burkholder-Davis-Gundy type inequality in Lemma~7.7 in Da Prato \& Zabczyk~\cite{dz92} implies that for all  $t \in [0, T]$  it holds that
\begin{align}\label{eq:burk}
\begin{split}
\|O_t - \mathcal{O}_t^n\|_{\mathcal{L}^p(\P; H_{\varrho})}  & = \left\| \smallint_0^t (\mathrm{Id}_H- P_n) \, e^{(t-s)A} \, B \, d W_s \right\|_{L^p(\P; H_{\varrho})} \\
& \leq \left[ \tfrac{p(p-1)}{2} \smallint_0^t \big \|(\mathrm{Id}_H- P_n) \,  e^{(t-s)A} \, B \big\|^2_{HS(H, H_{\varrho})} \, ds \right]^{\nicefrac{1}{2}} \\
& \leq \left[ \tfrac{p(p-1)}{2}  \|B \|^2_{HS(H, H_{(\beta-1)/2})} \smallint_0^t \big \|(\mathrm{Id}_H- P_n) \, e^{(t-s)A} \big\|^2_{L(H_{(\beta-1)/2}, H_{\varrho})} \, ds \right]^{\nicefrac{1}{2}}.
\end{split}
\end{align}
Next note that, e.g., Theorem~4.7.6 
in \cite{Jentzen2015SPDElecturenotes} proves for all  $ t \in [0, T] $ that
\begin{align}
\begin{split}
& \int_0^t \big \|(\mathrm{Id}_H- P_n) \, e^{(t-s)A} \big\|^2_{L(H_{(\beta-1)/2}, H_{\varrho})} \, ds \leq  \int_0^t  \|\mathrm{Id}_H- P_n \|^2_{L(H_{\varrho+\varepsilon}, H_{\varrho})} \, \| e^{(t-s)A} \|^2_{L(H_{(\beta-1)/2}, H_{\varrho+ \varepsilon})} \, ds \\ & = 
\| 
( - A )^{ - \varepsilon }
( \Id_H - P_n )
\|^2_{
	L( H )
}
\int_0^t  
\|
( - A )^{ ( \varrho + \varepsilon + \nicefrac{(1-\beta)}{2} ) }  
\,
e^{ s A } 
\|^2_{ L(H) } 
\, ds  
\\ & \leq 
| \lambda_{n+1}|^{-2 \varepsilon } 
\int_0^t s^{ - 2( \varrho + \varepsilon + \nicefrac{(1-\beta)}{2} ) } \, ds 
\leq   
\frac{ n^{-8 \varepsilon }
	\, 
	t^{ ( \beta - 2\varrho - 2\varepsilon  ) }
}{
( \beta - 2\varrho - 2\varepsilon  ) 
}
.
\end{split}
\end{align}
This together with \eqref{eq:burk} yields that for all  $t \in [0, T]$ it holds that
\begin{align}
\|O_t - \mathcal{O}_t^n\|_{\mathcal{L}^p(\P; H_{\varrho})} \leq \left[\tfrac{p(p-1) T^{ ( \beta - 2\varrho - 2\varepsilon  ) }}{2( \beta - 2\varrho - 2\varepsilon  )} \right]^{\nicefrac{1}{2}}  \|B \|_{HS(H, H_{(\beta-1)/2})} \, 	n^{- 4\varepsilon }.
\end{align}
The proof of Lemma~\ref{lemma:conv:rate} is thus completed.
\end{proof}

%%%%%%%%%%%%%%%%%%%%%%%%%%%%%%%%%%%%%%%%%%%%%%%%%%%%%%%%%%%%%%%%%%%%%%%%%%%%
\subsection{Strong convergence}

\begin{cor}
\label{kura:cor:last}
Assume the setting in Section~\ref{setting:kura} and let $ X\colon [0, T] \times \Omega \to H_{\varrho}$  be a  stochastic process with continuous sample paths which satisfies for all $t \in [0, T]$  that  $ [X_t ]_{\P, \mathcal{B}(H)} =    [ e^{ t A }   \xi  + \smallint_0^t e^{  ( t - s ) A}  \, F (  X_s ) \, ds ]_{\P, \mathcal{B}(H)} + \int_0^t e^{(t-s)A} \, B \, dW_s$. Then it holds for all $p \in (0, \infty)$ that
\begin{align}
\limsup_{n \to \infty} \sup_{t \in [0,T]} \E \big[ \| X_t -\Y_t^n \|_H^p \big] = 0.
\end{align}
\end{cor}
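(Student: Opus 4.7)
The strategy is to apply Theorem~\ref{theorem:main} after absorbing the deterministic initial-datum propagation into the noise process. Let $O \colon [0,T] \times \Omega \to H_{\varrho}$ be a continuous modification of the stochastic convolution $(\smallint_0^t e^{(t-s)A} B\,dW_s)_{t \in [0,T]}$ (its existence follows from the hypothesis $\smallsum_k (|b_k|^2 + |\tilde{b}_k|^2)|k|^{4\beta - 4} < \infty$, i.e.\ $B \in \mathrm{HS}(H, H_{(\beta-1)/2})$, together with the factorization argument), and set $\tilde{O}_t := e^{tA}\xi + O_t$ and $\tilde{\mathcal{O}}^n_t := P_n e^{tA}\xi + \mathcal{O}^n_t$. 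With these augmented noise processes, $X$ satisfies $X_t = \smallint_0^t e^{(t-s)A} F(X_s)\,ds + \tilde{O}_t$, the indicator in \eqref{eq:kura_X} becomes $\mathbbm{1}_{\{\|\Y^n_{\lf s\rf_{h_n}}\|_{H_{\varrho}} + \|\tilde{\mathcal{O}}^n_{\lf s\rf_{h_n}}\|_{H_{\varrho}} \leq |h_n|^{-\chi}\}}$, and the scheme \eqref{eq:kura_X} takes the abstract form demanded by Theorem~\ref{theorem:main}. I would then apply Theorem~\ref{theorem:main} with $\alpha = \nicefrac{1}{4}$, $\rho = \nicefrac{1}{16}$, $\vartheta = 1$, $\varphi = \nicefrac{1}{2}$, $\mathbb{H} = \{e_k \colon k \in \Z\}$, $\mathbb{H}_n = \{e_k \colon -n \leq k \leq n\}$, with $\phi, \Phi$ as in Corollary~\ref{kura:cor} for an $\varepsilon \in (0,\infty)$ satisfying \eqref{kura:vareps}, and with a parameter $p \in [2,\infty)$ to be taken arbitrarily large at the end.

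Most hypotheses of Theorem~\ref{theorem:main} are immediate from the auxiliary lemmas above: the generalized coercivity inequality is exactly Lemma~\ref{coer:kura}; the local Lipschitz estimate $\|F(v)-F(w)\|_{H_{-1/4}} \leq \theta(1 + \|v\|_{H_{1/16}} + \|w\|_{H_{1/16}})\|v-w\|_{H_{1/16}}$ (matching $\vartheta = 1$) is Lemma~\ref{nonlin:kura}; the constraint $\chi \in (0, (1-\alpha-\rho)/(1+2\vartheta)] \cap (0, (\varrho-\rho)/(1+\vartheta)]$ reduces to $\chi \leq \nicefrac{11}{48}$ and $\chi \leq \varrho/2 - \nicefrac{1}{32}$, both of which follow from the setting assumption $\chi \in (0, \varrho/2 - \nicefrac{1}{32}]$ together with $\varrho < \nicefrac{\beta}{2} < \nicefrac{1}{4}$; and the two uniform moment bounds required of $\tilde{\mathcal{O}}^n$ are precisely the conclusions of Corollary~\ref{kura:cor} (whose Fernique-type step \eqref{kura:prop:phi(w)} is the crucial ingredient that exploits the generalized coercivity).

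The main obstacle is verifying the pathwise uniform convergence $\limsup_{n\to\infty}\sup_{t\in[0,T]}\|\tilde{O}_t - \tilde{\mathcal{O}}^n_t\|_{H_{\varrho}} = 0$ $\P$-a.s., since Lemma~\ref{lemma:conv:rate} supplies only pointwise $L^p$ rates. The deterministic piece is harmless: $\sup_t\|(\Id_H - P_n)e^{tA}\xi\|_{H_\varrho} \leq |\lambda_{n+1}|^{\varrho - 1/4}\|\xi\|_{H_{1/4}} \to 0$ because $\varrho < \nicefrac{1}{4}$. For the stochastic piece I would upgrade Lemma~\ref{lemma:conv:rate} to a maximal estimate via the Da Prato--Zabczyk factorization: for $\mu \in (0, \nicefrac{\beta}{2} - \varrho)$ one writes
\begin{equation*}
O_t - \mathcal{O}^n_t = \tfrac{\sin(\pi\mu)}{\pi}\smallint_0^t (t-s)^{\mu - 1} e^{(t-s)A} (\Id_H - P_n) Y_s\,ds, \quad Y_s := \smallint_0^s (s-u)^{-\mu} e^{(s-u)A} B\,dW_u,
\end{equation*}
and H\"older's inequality with an exponent $q > \nicefrac{1}{\mu}$ combined with the Burkholder--Davis--Gundy inequality (exactly as in the proof of Lemma~\ref{lemma:conv:rate}) yields $\E\!\left[\sup_{t\in[0,T]}\|O_t - \mathcal{O}^n_t\|_{H_\varrho}^q\right] \leq C_q\, n^{-cq}$ for some $c \in (0,\infty)$. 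Taking $q$ large enough and applying Markov's inequality and the Borel--Cantelli lemma produces an event $\varXi \in \F$ of full $\P$-measure on which the desired uniform convergence of $\tilde{\mathcal{O}}^n$ to $\tilde{O}$ holds.

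With $\varXi$ in place, all hypotheses of Theorem~\ref{theorem:main} are satisfied, and conclusion (v) yields $\limsup_{n\to\infty}\sup_{t\in[0,T]}\E[\|X_t - \Y^n_t\|_H^{q'}] = 0$ for every $q' \in (0,p)$. Since the parameter $p \in [2,\infty)$ can be taken arbitrarily large (by adjusting $\varepsilon$ in \eqref{kura:vareps} accordingly), the corollary follows for every $p \in (0,\infty)$.
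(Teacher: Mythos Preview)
Your overall strategy matches the paper's: shift the initial datum into the noise, apply Theorem~\ref{theorem:main} with $\alpha = \nicefrac14$, $\rho = \nicefrac{1}{16}$, $\vartheta = 1$, $\varphi = \nicefrac12$, verify coercivity via Lemma~\ref{coer:kura}, the local Lipschitz bound via Lemma~\ref{nonlin:kura}, the moment hypothesis via Corollary~\ref{kura:cor}, and then free up the exponent by shrinking the $\varepsilon$ in~\eqref{kura:vareps}. Your $\chi$-check is also correct.

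The one place where the two arguments diverge is the passage from the pointwise $L^q$-rate of Lemma~\ref{lemma:conv:rate} to $\P$-a.s.\ uniform-in-time convergence of the discrete stochastic convolutions. The paper does not use the factorization: it first proves a time-H\"older bound for $O$ and for $\mathcal{O}^n$ in $L^q(\P;H_\varrho)$, invokes Kolmogorov--Chentsov to manufacture \emph{continuous} modifications $\tilde{\mathcal{O}}^n$, feeds the pointwise rate of Lemma~\ref{lemma:conv:rate} together with the H\"older exponent into Corollary~2.11 of Cox et al.~\cite{CoxWelti2016} to obtain a maximal $L^q$-rate, and finally applies the Borel--Cantelli-type Lemma~3.21 of~\cite{Hutzenthaler2015}. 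Your Da~Prato--Zabczyk factorization route is equally valid and a bit more self-contained. One technical point you gloss over, though: in the setting of Section~\ref{setting:kura} the processes $\mathcal{O}^n$ are merely stochastic processes with prescribed one-dimensional marginals and are not assumed to have continuous sample paths, so the factorization identity and the resulting maximal bound hold only for a continuous version, not for the given $\mathcal{O}^n$. The paper therefore works throughout with the continuous modifications $\tilde{\mathcal{O}}^n$ and verifies in~\eqref{eq:X^n} that the scheme~\eqref{eq:kura_X} still holds $\P$-a.s.\ with $\tilde{\mathcal{O}}^n$ in place of $\mathcal{O}^n$ (harmless, since only finitely many grid-point evaluations occur). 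You should make this replacement explicit before applying Theorem~\ref{theorem:main}; otherwise the hypothesis $\limsup_{m\to\infty}\sup_{s}\|O_s(\omega)-\mathcal{O}^m_s(\omega)\|_{H_\varrho}=0$ on $\varXi$ cannot be justified for the original $\mathcal{O}^n$.
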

\begin{proof}[Proof of Corollary~\ref{kura:cor:last}.]
Throughout this proof let $ \phi , \Phi \in \mathcal{M}( \mathcal{B}(H_{1}) , \mathcal{B}([0,\infty)))$,  $\varepsilon \in (0, \nicefrac{\beta}{2} -\varrho)$, $p, \gamma \in (0, \infty)$, $q \in  ( \max\{p, \nicefrac{1}{\beta},\nicefrac{4}{\varepsilon} \}, \infty)$ satisfy for all $ v \in H_{1}$ that  $\phi(v) = \frac{3\vt}{2} + \tfrac{\kappa^4}{16} [1 + \nicefrac{1}{\gamma}]^2 + \gamma \!   \left[\sup\nolimits_{x \in (0,1)} |\und{v}(x)|^2 \right]$, $\Phi(v)= \frac{\vt}{2} \|v \|_H^2 +  \frac{1}{4} \| v^2 \|_H^2$, and
\begin{align}\label{eq:eps:bound}
\begin{split}
\gamma &\leq  \tfrac{1}{7200  q^3 T} \left[ \max\! \left\{ 1, \smallsum\limits_{k \in \mathbb{Z}} \tfrac{  \max \{|k \pi|^{4 \beta}, 1 \} (|b_k|^2 +  | \tilde{b}_{k} |^2 )}{\lambda_k} \right\} \right]^{-1} \\
& \quad  \cdot \Big[\sup \!\Big(\Big\{\! \sup\nolimits_{x \in (0,1)} |v(x)| \colon \big[v \in \mathcal{C}((0,1), \R) \text{ and } \|v\|_{\W^{\beta, q}((0,1), \mathbb{R})} \leq 1\big] \Big\}\Big)\Big]^{-2}.
\end{split}
\end{align}
Next note that Lemma~\ref{coer:kura} implies that for all $v, w \in H_1$ it holds that
\begin{align}\label{kura:cor:coer}
\left< v, F( v + w ) \right>_H \leq \phi(w) \| v \|^2_H+ \tfrac{1}{2} \| v \|^2_{ H_{ 1 / 2 } }+ \Phi( w ).
\end{align}
Moreover, Lemma~\ref{nonlin:kura} proves that for all $v, w \in H_{1}$ it holds that $F \in \mathcal{C}(H_{1/16}, H_{-1/4})$ and
\begin{align}\label{kura:cor:con}
\begin{split}
&\|F(v) - F(w) \|_{H_{-1 /4}} \\
& \leq  \left( \vt^{\nicefrac{11}{16}} + |\kappa| \left[\sup_{u \in  H_{ 1/16 }\backslash \{0\}}  \frac{\|u\|^2_{L^4(\lambda_{(0,1)}; \R)}}{\|u\|^2_{H_{1/16}}} \right]\right) \big( 1+ \left\| v \right\|_{ H_{1/16 }} +  \left\| w \right\|_{ H_{1/16 }} \big) \left\| v - w \right\|_{ H_{1/16}} < \infty.
\end{split}
\end{align} 
In the next step observe that the Burkholder-Davis-Gundy type inequality in Lemma~7.7 in Da Prato \& Zabczyk~\cite{dz92} shows that for all $n \in \N$, $t_1, t_2 \in [0, T]$ with $t_1 < t_2$ it holds that
\begin{align}
\begin{split}
&\left\| \int_0^{t_1}P_n \, e^{(t_1-s)A} \, B \, dW_s - \int_0^{t_2} P_n \, e^{(t_2-s)A} \, B \, dW_s \right\|^2_{L^q(\P; H_{\varrho})} \\
&+ \left\| \int_0^{t_1} e^{(t_1-s)A} \, B \, dW_s - \int_0^{t_2}  e^{(t_2-s)A} \, B \, dW_s \right\|^2_{L^q(\P; H_{\varrho})}  \\
& \leq q(q-1) \int_{t_1}^{t_2} \big \| e^{(t_2-s)A} \, B \big\|^2_{HS(H, H_{\varrho})} \, ds + q (q-1) \int_{0}^{t_1} \big \| \big(e^{(t_1-s)A}- e^{(t_2-s)A}\big) B \big\|^2_{HS(H, H_{\varrho})} \, ds  \\
& \leq q (q-1) \|B \|^2_{HS(H, H_{(\beta-1)/2})} \\
&  \cdot \left[ \int_{t_1}^{t_2} \big \|  e^{(t_2-s)A}  \big\|^2_{L(H_{(\beta-1)/2}, H_{\varrho})} \, ds+ \int_0^{t_1} \big\| e^{(t_1-s)A} \big(\Id_H - e^{(t_2-t_1)A}\big)  \big\|_{L(H_{(\beta-1)/2}, H_{\varrho})}^2 \, ds \right].
\end{split}
\end{align}
Therefore, e.g., Theorem~4.7.6 in \cite{Jentzen2015SPDElecturenotes} and, e.g., Lemma~4.7.7 in \cite{Jentzen2015SPDElecturenotes} imply that for all   $n \in \N$, $t_1, t_2 \in [0, T]$ with $t_1 < t_2$ it holds that
\begin{align}\label{eq:O:Holder}
\begin{split}
&\left\| \int_0^{t_1}P_n \, e^{(t_1-s)A} \, B \, dW_s - \int_0^{t_2} P_n \, e^{(t_2-s)A} \, B \, dW_s \right\|^2_{L^q(\P; H_{\varrho})}  \\
&+ \left\| \int_0^{t_1} e^{(t_1-s)A} \, B \, dW_s - \int_0^{t_2}  e^{(t_2-s)A} \, B \, dW_s \right\|^2_{L^q(\P; H_{\varrho})}  \\
&   \leq q(q-1)  \|B \|^2_{HS(H, H_{(\beta-1)/2})}  \bigg[ \int_{t_1}^{t_2} \big \| (-A)^{(\varrho- \nicefrac{(\beta-1)}{2})} \, e^{(t_2-s)A}  \big\|^2_{L(H)} \, ds \\
& + \int_0^{t_1} \big\| (-A)^{(\varrho- \nicefrac{(\beta-1)}{2} +\varepsilon)} \, e^{(t_1-s)A}  \big\|_{L(H)}^2 \, \big\|(-A)^{-\varepsilon} \big(\Id_H - e^{(t_2-t_1)A}\big)  \big\|_{L(H)}^2 \, ds \bigg]\\
& \leq q (q-1)  \|B \|^2_{HS(H, H_{(\beta-1)/2})} \left[ \int_{t_1}^{t_2} (t_2-s)^{(\beta-1-2\varrho)} \, ds + \int_0^{t_1} (t_1-s)^{(\beta-1-2\varrho-2\varepsilon)} (t_2 -t_1)^{2\varepsilon} \, ds \right]\\
& = q(q-1) \|B \|^2_{HS(H, H_{(\beta-1)/2})} \left[ \frac{(t_2-t_1)^{(\beta- 2 \varrho)}}{(\beta- 2\varrho)} + \frac{|t_1|^{(\beta- 2 \varrho -2\varepsilon)} (t_2-t_1)^{2\varepsilon}}{(\beta- 2\varrho-2\varepsilon)} \right]\\
& \leq \frac{2q(q-1) \|B \|^2_{HS(H, H_{(\beta-1)/2})}  T^{(\beta- 2 \varrho -2\varepsilon)} (t_2-t_1)^{2\varepsilon}}{(\beta- 2\varrho-2\varepsilon)}  .
\end{split}
\end{align}
Combining this with the Kolmogorov-Chentsov theorem and the fact that $ q \varepsilon > 1$ yields that there exist  stochastic processes  $ O \colon [0, T] \times \Omega \to H_{\varrho}$ and  $\tilde{\mathcal{O}}^n \colon [0, T] \times \Omega \to P_n(H)$, $n \in \N$, with continuous sample paths which satisfy for all $n \in \N$, $t \in [0, T]$ that $[O_t]_{\P, \mathcal{B}(H)} =  \int_0^{t}  e^{(t-s)A} \, B \, dW_s$ and $[\tilde{\mathcal{O}}^n_t]_{\P, \mathcal{B}(H)} =  \int_0^{t} P_n \, e^{(t-s)A} \, B \, dW_s$.
Next observe that Lemma~\ref{lemma:conv:rate} proves that for all $n \in \N$ it holds that
\begin{align}\
 \sup_{t \in [0, T]} \|O_t  - \tilde{\mathcal{O}}_t^n \|_{\mathcal{L}^q(\P; H_{\varrho})} \leq \left[\tfrac{q(q-1) T^{ ( \beta - 2\varrho - 2\varepsilon  ) }}{2( \beta - 2\varrho - 2\varepsilon  )} \right]^{\nicefrac{1}{2}} \|B \|_{HS(H, H_{(\beta-1)/2})} \,  n^{- \varepsilon }.
\end{align}
This, the fact that $ O \colon [0, T] \times \Omega \to H_{\varrho}$ and  $\tilde{\mathcal{O}}^n \colon [0, T] \times \Omega \to P_n(H)$, $n \in \N$, are stochastic processes with continuous sample paths, \eqref{eq:O:Holder}, and Corollary~2.11 in Cox et al.~\cite{CoxWelti2016} (with $T=T$, $p=q$, $\beta= \varepsilon$, $\theta^N= \{ \frac{k T}{N} \in [0, \infty) \colon k \in \N_0 \cap [0, N] \}$, $(E, \left\| \cdot \right\|_E)= (H_{\varrho}, \left\| \cdot \right\|_{H_{\varrho}})$, $Y^N= ([0,T] \times \Omega  \ni (t, \omega) \mapsto \tilde{\mathcal{O}}^N_t(\omega) \in H_{\varrho})$, $Y^0= O$, $\alpha=0$, $\varepsilon= \nicefrac{\varepsilon}{2}$ for $N \in \N$ in the notation of Corollary~2.11 in Cox et al.~\cite{CoxWelti2016}) ensure that
\begin{align}
\sup_{n \in \N} \Bigg( n^{(\nicefrac{\varepsilon}{2}- \nicefrac{1}{q})} \bigg\| \sup_{t \in [0, T]} \|O_t - \tilde{\mathcal{O}}_t^n \|_{H_{\varrho}} \bigg\|_{\mathcal{L}^q(\P; \R)} \Bigg)< \infty.
\end{align}
Lemma~3.21 in  \cite{Hutzenthaler2015} (cf., e.g., Theorem~7.12 in Graham \& Talay \cite{Graham2013} and Lemma~2.1 in Kloeden \& Neuenkirch \cite{Kloeden2007}) together with the fact that $\nicefrac{\varepsilon}{2}- \nicefrac{1}{q} > \nicefrac{1}{q}$ hence yields that  
\begin{align}\label{eq:O:conv}
\P \bigg(\limsup_{n \to \infty} \sup_{s \in [0, T]} \| O_s - \tilde{\mathcal{O}}_s^n \|_{H_{\varrho}} =0 \bigg)=1.
\end{align}
In the next step observe that  for all $n \in \N$, $t \in [0, T]$ it holds that 
\begin{align}
\|(\Id_H- P_n) \, e^{tA}  \xi \|_{H_{\varrho}} \leq \|(-A)^{\varrho-\nicefrac{1}{4}} (\Id_H-P_n) \|_{L(H)} \|\xi\|_{H_{1/4}} = |\lambda_{n+1}|^{\varrho-\nicefrac{1}{4}} \|\xi\|_{H_{1/4}} \leq n^{\varrho-\nicefrac{1}{4}} \|\xi\|_{H_{1/4}}.
\end{align}
Combining this with \eqref{eq:O:conv} proves that 
\begin{align}\label{eq:O:as}
\P \bigg( \limsup_{n \to \infty} \sup_{s \in [0, T]} \big\| (O_s + e^{sA}  \xi) - (\tilde{\mathcal{O}}_s^n + P_n  e^{sA}  \xi )\big\|_{H_{\varrho}} =0 \bigg)=1.
\end{align}
Moreover, note that the fact that  $ \forall \, n \in \N$, $t \in [0, T] \colon \P(\mathcal{O}_t^n = \tilde{\mathcal{O}}^n_t )=1$ and \eqref{eq:kura_X} ensure that for all $n \in \N$, $t \in [0, T]$ it holds that 
\begin{equation}\label{eq:X^n}
\P \Big( \Y_t^n = P_n  e^{ t A } \xi + \smallint_0^t P_n \,  e^{  ( t - s ) A } \, \mathbbm{1}_{ \{ \| \Y_{ \lf s \rf_{h_n} }^n \|_{ H_{\varrho} } + \| \tilde{\mathcal{O}}_{ \lf s \rf_{h_n} }^n +P_n  e^{ \lf s \rf_{ h_n } A } \xi \|_{ H_{\varrho} } \leq | h_n|^{ - \chi } \}} \, F \big(  \Y_{ \lf s \rf_{ h_n } }^n \big) \, ds + \tilde{\mathcal{O}}_t^n \Big)=1. 
\end{equation}
In addition, Corollary~\ref{kura:cor}, \eqref{eq:eps:bound}, and again the fact that  $ \forall \, n \in \N$, $t \in [0, T] \colon \P(\mathcal{O}_t^n = \tilde{\mathcal{O}}^n_t )=1$ show that
 \begin{align}\label{eq:strong:limsup}
 \begin{split}
 &\limsup_{m \to \infty}   \E \! \left[ \int_0^T \exp \! \left( \smallint_r^T q \, \phi\big(\tilde{\mathcal{O}}_{\lf u \rf_{h_m}}^m + P_m e^{\lf u \rf_{h_m} A} \xi\big) \, du \right) \max \! \left\{1, \big| \Phi\big( \tilde{\mathcal{O}}_{ \lf r \rf_{h_m } }^m + P_m e^{\lf r \rf_{h_m} A} \xi\big)\big|^{\nicefrac{q}{2}} \right\} dr \right] \\
 &+ \limsup_{m \to \infty}  \sup_{s \in [0,T]} \E  \big[\|  \tilde{\mathcal{O}}_s^m  + P_m e^{sA}  \xi  \|_H^q \big] < \infty.  
 \end{split}
 \end{align}
Combining  \eqref{kura:cor:coer}--\eqref{kura:cor:con}, \eqref{eq:O:as}--\eqref{eq:strong:limsup},  the fact that $p \in (0, q)$, the fact that $ \forall \, t \in [0, T] \colon \P(X_t = \int_0^t e^{(t-s)A} \, F(X_s) \, ds +O_t + e^{tA}  \xi)=1$, and Item~(v) in Theorem~\ref{theorem:main} (with
$\mathbb{H}= \{ e_k \in H \colon k \in \Z \}$, $\alpha= \nicefrac{1}{4}$, $\varphi= \nicefrac{1}{2}$, $\rho= \nicefrac{1}{16}$, $\varrho =\varrho$, $ \theta=  \vt^{\nicefrac{11}{16}} + |\kappa| [\sup_{u \in  H_{ 1/16 }\backslash \{0\}}  \nicefrac{\|u\|^2_{L^4(\lambda_{(0,1)}; \R)}}{\|u\|^2_{H_{1/16}}} ]$, $\vartheta=1$, $p=q$, $F=F$, $\phi=\phi$, $\Phi=\Phi$, $\mathbb{H}_n = \{ e_k \in \mathbb{H} \colon k \in \{-n, 1-n, \ldots , n-1, n\}\}$, $h_n=h_n$,  $\mathcal{X}^n = ([0, T] \times \Omega \ni (\omega, t) \mapsto \mathcal{X}_t^n(\omega) \in H_{\varrho})$,    $ 
\mathcal{O}^n = ( [0,T] \times \Omega \ni (t,\omega) \mapsto (\tilde{\mathcal{O}}^n_t( \omega ) + P_n  e^{tA} \xi) \in H_{\varrho} ) 
$,  $X=X$, $ 
O = ( [0,T] \times \Omega \ni (t,\omega) \mapsto (O_t( \omega ) + e^{tA}  \xi) \in H_{\varrho}) 
$,  $q=p$ for $n \in \N$   in the notation of Theorem~\ref{theorem:main}) completes the proof of Corollary~\ref{kura:cor:last}. 
\end{proof}

\subsubsection*{Acknowledgement}
This project has been partially supported by the Deutsche Forschungsgesellschaft (DFG) via RTG 2131
\textit{High-dimensional Phenomena in Probability -- Fluctuations and Discontinuity}.

\bibliographystyle{acm}
\bibliography{bibfile}

\def\polhk#1{\setbox0=\hbox{#1}{\ooalign{\hidewidth
  \lower1.5ex\hbox{`}\hidewidth\crcr\unhbox0}}}
\begin{thebibliography}{10}

\bibitem{BeckerJentzen2016}
{\sc {Becker}, S., and {Jentzen}, A.}
\newblock {Strong convergence rates for nonlinearity-truncated Euler-type
  approximations of stochastic Ginzburg-Landau equations}.
\newblock {\em arXiv:1601.05756\/} (2016), 58 pages.

\bibitem{BloemkerJentzen2013}
{\sc Bl{\"o}mker, D., and Jentzen, A.}
\newblock Galerkin approximations for the stochastic {B}urgers equation.
\newblock {\em SIAM J. Numer. Anal. 51}, 1 (2013), 694--715.

\bibitem{Bogachev2007}
{\sc Bogachev, V.~I.}
\newblock {\em Measure {T}heory. {V}ol. {I}, {II}}.
\newblock Springer-Verlag, Berlin, 2007.

\bibitem{Brzezniak2013}
{\sc Brze{\'z}niak, Z., Carelli, E., and Prohl, A.}
\newblock Finite-element-based discretizations of the incompressible
  {N}avier-{S}tokes equations with multiplicative random forcing.
\newblock {\em IMA J. Numer. Anal. 33}, 3 (2013), 771--824.

\bibitem{CoxHutzenthalerJentzen2014}
{\sc {Cox}, S., {Hutzenthaler}, M., and {Jentzen}, A.}
\newblock Local {L}ipschitz continuity in the initial value and strong
  completeness for nonlinear stochastic differential equations.
\newblock {\em arXiv:1309.5595\/} (2013), 88 pages.

\bibitem{CoxWelti2016}
{\sc {Cox}, S., {Hutzenthaler}, M., {Jentzen}, A., {van Neerven}, J., and
  {Welti}, T.}
\newblock Convergence in {H}{\"o}lder norms with applications to {M}onte
  {C}arlo methods in infinite dimensions.
\newblock {\em arXiv:1605.00856\/} (2016), 38 pages.

\bibitem{dz92}
{\sc Da~Prato, G., and Zabczyk, J.}
\newblock {\em Stochastic equations in infinite dimensions}, vol.~44 of {\em
  Encyclopedia of Mathematics and its Applications}.
\newblock Cambridge University Press, Cambridge, 1992.

\bibitem{DuanErvin2001_Kuramoto_Sivashinsky}
{\sc Duan, J., and Ervin, V.~J.}
\newblock On the stochastic {K}uramoto-{S}ivashinsky equation.
\newblock {\em Nonlinear Anal. 44}, 2, Ser. A: Theory Methods (2001), 205--216.

\bibitem{Graham2013}
{\sc Graham, C., and Talay, D.}
\newblock {\em Stochastic simulation and {M}onte {C}arlo methods}, vol.~68 of
  {\em Stochastic Modelling and Applied Probability}.
\newblock Springer, Heidelberg, 2013.
\newblock Mathematical foundations of stochastic simulation.

\bibitem{gm05}
{\sc Gy{\"o}ngy, I., and Millet, A.}
\newblock On discretization schemes for stochastic evolution equations.
\newblock {\em Potential Anal. 23}, 2 (2005), 99--134.

\bibitem{GoengySabanisS2015}
{\sc Gy{\"o}ngy, I., Sabanis, S., and {\v{S}}i{\v{s}}ka, D.}
\newblock Convergence of tamed {E}uler schemes for a class of stochastic
  evolution equations.
\newblock {\em Stoch. Partial Differ. Equ. Anal. Comput.\/} (2015), 1--21.

\bibitem{h81}
{\sc Henry, D.}
\newblock {\em Geometric theory of semilinear parabolic equations}, vol.~840 of
  {\em Lecture Notes in Mathematics}.
\newblock Springer-Verlag, Berlin, 1981.

\bibitem{Hu1996}
{\sc Hu, Y.}
\newblock Semi-implicit {E}uler-{M}aruyama scheme for stiff stochastic
  equations.
\newblock In {\em Stochastic analysis and related topics, {V} ({S}ilivri,
  1994)}, vol.~38 of {\em Progr. Probab.} Birkh\"auser Boston, Boston, MA,
  1996, pp.~183--202.

\bibitem{hj11}
{\sc Hutzenthaler, M., and Jentzen, A.}
\newblock Convergence of the stochastic {E}uler scheme for locally {L}ipschitz
  coefficients.
\newblock {\em Found. Comput. Math. 11}, 6 (2011), 657--706.

\bibitem{Hutzenthaler2015}
{\sc Hutzenthaler, M., and Jentzen, A.}
\newblock Numerical approximations of stochastic differential equations with
  non-globally {L}ipschitz continuous coefficients.
\newblock {\em Mem. Amer. Math. Soc. 4\/} (2015), 1--112.

\bibitem{hjk11}
{\sc Hutzenthaler, M., Jentzen, A., and Kloeden, P.~E.}
\newblock Strong and weak divergence in finite time of {E}uler's method for
  stochastic differential equations with non-globally {L}ipschitz continuous
  coefficients.
\newblock {\em Proc. R. Soc. Lond. Ser. A Math. Phys. Eng. Sci. 467\/} (2011),
  1563--1576.

\bibitem{HutzenthalerJentzenKloeden2012}
{\sc Hutzenthaler, M., Jentzen, A., and Kloeden, P.~E.}
\newblock Strong convergence of an explicit numerical method for {SDE}s with
  non-globally {L}ipschitz continuous coefficients.
\newblock {\em Ann. Appl. Probab. 22}, 4 (2012), 1611--1641.

\bibitem{HutzenthalerJentzenKloeden2013}
{\sc Hutzenthaler, M., Jentzen, A., and Kloeden, P.~E.}
\newblock Divergence of the multilevel {M}onte {C}arlo {E}uler method for
  nonlinear stochastic differential equations.
\newblock {\em Ann. Appl. Probab. 23}, 5 (2013), 1913--1966.

\bibitem{Jentzen2015SPDElecturenotes}
{\sc Jentzen, A.}
\newblock Stochastic {P}artial {D}ifferential {E}quations: {A}nalysis and
  {N}umerical {A}pproximations.
\newblock Lecture notes, ETH Zurich, summer semester 2015, available online at
  \url{https://people.math.ethz.ch/~grsam/NASPDE_MATH_FS15/script/NASPDE_130.pdf},
  September 2015.

\bibitem{jk09d}
{\sc Jentzen, A., and Kloeden, P.~E.}
\newblock The numerical approximation of stochastic partial differential
  equations.
\newblock {\em Milan J. Math. 77}, 1 (2009), 205--244.

\bibitem{Jentzen2009c}
{\sc Jentzen, A., and Kloeden, P.~E.}
\newblock Overcoming the order barrier in the numerical approximation of
  stochastic partial differential equations with additive space-time noise.
\newblock {\em Proc. R. Soc. Lond. Ser. A Math. Phys. Eng. Sci. 465}, 2102
  (2009), 649--667.

\bibitem{Jentzen2011b}
{\sc Jentzen, A., Kloeden, P.~E., and Winkel, G.}
\newblock Efficient simulation of nonlinear parabolic {SPDE}s with additive
  noise.
\newblock {\em Ann. Appl. Probab. 21}, 3 (2011), 908--950.

\bibitem{jp2015}
{\sc {Jentzen}, A., and {Pu{\v s}nik}, P.}
\newblock {Strong convergence rates for an explicit numerical approximation
  method for stochastic evolution equations with non-globally Lipschitz
  continuous nonlinearities}.
\newblock {\em arXiv:1504.03523\/} (2015), 38 pages.

\bibitem{Kloeden2007}
{\sc Kloeden, P.~E., and Neuenkirch, A.}
\newblock The pathwise convergence of approximation schemes for stochastic
  differential equations.
\newblock {\em LMS J. Comput. Math. 10\/} (2007), 235--253.

\bibitem{kll2015}
{\sc {Kov{\'a}cs}, M., {Larsson}, S., and {Lindgren}, F.}
\newblock {On the discretization in time of the stochastic Allen-Cahn
  equation}.
\newblock {\em arXiv:1510.03684\/} (2015), 34 pages.

\bibitem{Kurniawan2014}
{\sc Kurniawan, R.}
\newblock Numerical approximations of stochastic partial differential equations
  with non-globally {L}ipschitz continuous nonlinearities.
\newblock {\em Master thesis, University of Zurich and ETH Zurich\/} (2014), 74
  pages.

\bibitem{lt13}
{\sc Lord, G.~J., and Tambue, A.}
\newblock Stochastic exponential integrators for the finite element
  discretization of {SPDE}s for multiplicative and additive noise.
\newblock {\em IMA J. Numer. Anal. 33}, 2 (2013), 515--543.

\bibitem{pr07}
{\sc Pr{\'e}v{\^o}t, C., and R{\"o}ckner, M.}
\newblock {\em A concise course on stochastic partial differential equations},
  vol.~1905 of {\em Lecture Notes in Mathematics}.
\newblock Springer, Berlin, 2007.

\bibitem{qi2016accelerated}
{\sc Qi, R., and Wang, X.}
\newblock An accelerated exponential time integrator for semi-linear stochastic
  strongly damped wave equation with additive noise.
\newblock {\em arXiv:1602.06050\/} (2016), 20 pages.

\bibitem{Sabanis2013}
{\sc Sabanis, S.}
\newblock {A note on tamed Euler approximations}.
\newblock {\em Electron. Commun. Probab. 18\/} (2013), no. 47, 1--10.

\bibitem{Sabanis2013E}
{\sc {Sabanis}, S.}
\newblock {Euler approximations with varying coefficients: the case of
  superlinearly growing diffusion coefficients}.
\newblock {\em arXiv:1308.1796\/} (2015), 24 pages.

\bibitem{Stroock2010}
{\sc Stroock, D.~W.}
\newblock {\em Probability Theory}, second~ed.
\newblock Cambridge University Press, 2010.
\newblock Cambridge Books Online.

\bibitem{TretyakovZhang2013}
{\sc Tretyakov, M.~V., and Zhang, Z.}
\newblock {A fundamental mean-square convergence theorem for SDEs with locally
  Lipschitz coefficients and its applications}.
\newblock {\em SIAM J. Numer. Anal. 51}, 6 (2013), 3135--3162.

\bibitem{WangGan2013}
{\sc Wang, X., and Gan, S.}
\newblock {The tamed Milstein method for commutative stochastic differential
  equations with non-globally Lipschitz continuous coefficients}.
\newblock {\em J. Difference Equ. Appl. 19}, 3 (2013), 466--490.

\bibitem{wang2015note}
{\sc Wang, X., and Qi, R.}
\newblock A note on an accelerated exponential {E}uler method for parabolic
  {SPDE}s with additive noise.
\newblock {\em Applied Mathematics Letters 46\/} (2015), 31--37.

\end{thebibliography}

\end{document}